\NeedsTeXFormat{LaTeX2e}
\documentclass[reqno, a4paper, 12pt]{amsart}
\makeatletter
\let\origsection=\section 
\def\section{\@ifstar{\origsection*}{\mysection}}
\def\mysection{\@startsection{section}{1}\z@{.7\linespacing\@plus\linespacing}{
.5\linespacing}{\normalfont\scshape\centering\S}}
\makeatother

\usepackage{comment}
\usepackage{amsmath,amssymb,amsthm}
\usepackage{mathrsfs}
\usepackage{dsfont}
\usepackage{mathabx}\changenotsign
\usepackage[ruled,linesnumbered,vlined]{algorithm2e}

\usepackage{tikz}
\usetikzlibrary{shapes,decorations,arrows,calc,arrows.meta,fit,positioning}
\usepackage{graphicx}
\usepackage{fullpage}
\usetikzlibrary{decorations.pathreplacing, matrix}

\usepackage{lineno}

\usepackage{microtype}
\usepackage{color}
\usepackage[utf8]{inputenc}
\usepackage{enumitem}

\usepackage{xcolor}
\usepackage{hyperref}
\hypersetup{%
    colorlinks,
    linkcolor={red!60!black},
    citecolor={green!60!black},
    urlcolor={blue!60!black}
}

\usepackage{tikz}
\usepackage{subcaption}
\usetikzlibrary{graphs}

\usepackage{bookmark}

\usepackage[T1]{fontenc}
\usepackage{lmodern}

\usepackage[english]{babel}

\usepackage{fullpage}
\usepackage{setspace}
\footskip28pt

\usepackage{enumitem}

\let\eps=\varepsilon
\let\polishlcross=\l
\def\l{\ifmmode\ell\else\polishlcross\fi}

\let\emptyset=\varnothing
\let\setminus=\smallsetminus

\makeatletter
\def\moverlay{\mathpalette\mov@rlay}
\def\mov@rlay#1#2{\leavevmode\vtop{%
   \baselineskip\z@skip \lineskiplimit-\maxdimen
   \ialign{\hfil$\m@th#1##$\hfil\cr#2\crcr}}}
\newcommand{\charfusion}[3][\mathord]{
    #1{\ifx#1\mathop\vphantom{#2}\fi
        \mathpalette\mov@rlay{#2\cr#3}
      }
    \ifx#1\mathop\expandafter\displaylimits\fi}
\makeatother

\newcommand{\abs}[1]{\left| #1 \right|}
\newcommand{\cbc}[1]{\left\lbrace #1 \right\rbrace}
\newcommand{\bc}[1]{\left( #1 \right)}
\newcommand{\floor}[1]{\left\lfloor #1 \right\rfloor}


\newtheorem{theorem}{Theorem}
\newtheorem{lemma}[theorem]{Lemma}

\newtheorem{claim}[theorem]{Claim}

\newtheorem{definition}[theorem]{Definition}
\newtheorem{fact}[theorem]{Fact}

\def\cW{\mathcal{W}}
\def\cZ{\mathcal{Z}}
\def\cA{\mathcal{A}}
\def\cB{\mathcal{B}}

\usepackage{datetime}
\usepackage{lineno}
\newcommand*\patchAmsMathEnvironmentForLineno[1]{%
\expandafter\let\csname old#1\expandafter\endcsname\csname #1\endcsname
\expandafter\let\csname oldend#1\expandafter\endcsname\csname end#1\endcsname
\renewenvironment{#1}%
{\linenomath\csname old#1\endcsname}%
{\csname oldend#1\endcsname\endlinenomath}}%
\newcommand*\patchBothAmsMathEnvironmentsForLineno[1]{%
\patchAmsMathEnvironmentForLineno{#1}%
\patchAmsMathEnvironmentForLineno{#1*}}%
\AtBeginDocument{%
\patchBothAmsMathEnvironmentsForLineno{equation}%
\patchBothAmsMathEnvironmentsForLineno{align}%
\patchBothAmsMathEnvironmentsForLineno{flalign}%
\patchBothAmsMathEnvironmentsForLineno{alignat}%
\patchBothAmsMathEnvironmentsForLineno{gather}%
\patchBothAmsMathEnvironmentsForLineno{multline}%
}

\newcommand{\oldqed}{}
\def\endofClaim{\hfill\scalebox{.6}{$\Box$}}

\newenvironment{claimproof}[1][Proof]{
  \renewcommand{\oldqed}{\qedsymbol}
  \renewcommand{\qedsymbol}{\endofClaim}
  \begin{proof}[#1]
}{
  \end{proof}
  \renewcommand{\qedsymbol}{\oldqed}
}

\begin{document}
\onehalfspace
\shortdate
\yyyymmdddate
\settimeformat{ampmtime}
\date{\today, \currenttime}

\title{Minimum degree conditions for containing an $r$-regular $r$-connected subgraph}

\author[M.~Hahn-Klimroth]{Max Hahn-Klimroth} 
\address{hahnklim@math.uni-frankfurt.de, Goethe University Frankfurt, Robert-Mayer-Str. 10, 60235 Frankfurt, Germany }
\thanks{MHK is supported by DFG grant CO 646/5.}

\author[O.~Parczyk]{Olaf Parczyk}
\address{parczyk@mi.fu-berlin.de, FU Berlin, Arnimallee 3, 14195 Berlin, Germany}
\thanks{OP is supported by DFG grant PA 3513/1-1.}

\author[Y.~Person]{Yury Person}
\address{yury.person@tu-ilmenau.de, TU Ilmenau, Weimarer Str. 25, 98684 Ilmenau, Germany} 
\thanks{YP is supported by the Carl Zeiss Foundation and by DFG grant PE 2299/3-1.}

\begin{abstract}
We study  optimal minimum degree conditions when an $n$-vertex graph $G$ contains an $r$-regular $r$-connected subgraph. 
We prove for $r$ fixed and $n$ large the condition to be $\delta(G) \ge \frac{n+r-2}{2}$ when $nr \equiv 0 \pmod 2$. This answers a question of M.~Kriesell.
\end{abstract}


\maketitle


\section{Introduction}

A typical question in extremal graph theory is to determine (asymptotically) optimal minimum degree conditions for a graph $G$ on $n$ vertices to contain a given copy of some spanning graph. 
The classical theorem of Dirac~\cite{dirac} asserts the optimal minimum degree condition to contain a Hamilton cycle to be $\tfrac n2$. There are numerous generalisations of this result to higher connected cycles (powers of Hamilton cycles)~\cite{KSS_Seynmour}, which in turn generalise  the theorems of Corradi and Hajnal~\cite{CH63} and  Hajnal and Szemer\'edi~\cite{HS_erdos} about clique factors in graphs. The most comprehensive result which asymptotically subsumes all of the mentioned results is the bandwidth theorem of B\"ottcher, Schacht and Taraz~\cite{BST09}. This theorem provides a sufficient condition, which asymptotically depends only on the chromatic number of a bounded degree graph with sublinear bandwidth to be contained in a given dense graph. We also refer to the excellent survey~\cite{KO09} by K\"uhn and Osthus for more results.

The present work is motivated by a question of Matthias Kriesell~\cite{MKcomm} about optimal minimum degree condition sufficient to assert the existence of a $4$-regular $4$-connected spanning subgraph. This question in turn was motivated by the work of Bang-Jensen and Kriesell on good acyclic orientations of $4$-regular $4$-connected graphs~\cite{BJK19}.

We answer Kriesell's question by proving the following general result about $r$-connected $r$-regular subgraphs of $G$.
\begin{theorem}
     \label{thm:main}
    For any $r \ge 2$ there exists an $n_0$ such that any $n$-vertex graph $G$ with minimum degree $\delta(G) \ge \frac{n+r-2}{2}$, $n \ge n_0$, and $nr \equiv 0 \pmod 2$ contains a spanning $r$-regular $r$-connected subgraph.   
\end{theorem}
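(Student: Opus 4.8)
The plan is to obtain the two required properties---$r$-regularity and $r$-connectivity---in stages, the second being the real content. First I would record that the hypothesis already forces the host graph itself to be $r$-connected: if $S$ were a separator of $G$ with $|S|\le r-1$ and $A,B$ the two sides, then any $a\in A$ has all its neighbours in $(A\setminus\{a\})\cup S$, so $|A|\ge \delta(G)-|S|+1$ and likewise $|B|\ge \delta(G)-|S|+1$; summing gives $n\ge 2\delta(G)-|S|+2$, whence $|S|\ge 2\delta(G)-n+2\ge r$, a contradiction. Thus $\kappa(G)\ge r$, and the whole difficulty is to descend to a spanning $r$-regular subgraph without losing this.

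Second, I would fix an arbitrary $r$-factor $H\subseteq G$ to start from. Existence is cheap for fixed $r$ and large $n$: since $\delta(G)\ge n/2$ and $rn$ is even, an $r$-regular spanning subgraph is guaranteed by classical factor theory (the $f$-factor theorem of Tutte, or degree-type factor theorems), and indeed one may even take $H$ to be a union of $\lfloor r/2\rfloor$ edge-disjoint Hamilton cycles together with a perfect matching when $r$ is odd (here $rn$ even with $r$ odd forces $n$ even). What matters is not this particular $H$, but that the family of $r$-factors of $G$ is large and closed under a natural local move.

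The core step is to upgrade connectivity. Recall that any two $r$-factors differ by a disjoint union of cycles whose edges alternate between the two factors, so the elementary move available to me is to toggle an $H$-alternating cycle whose non-$H$ edges lie in $G$; this preserves $r$-regularity. I would then choose, among all $r$-factors of $G$, one maximising $\kappa$ and, subject to that, minimising the number of separators of minimum size, and argue that $\kappa=r$. If not, take a minimum separator $S$ with $s:=|S|\le r-1$ and sides $X,Y$; since $G$ is $r$-connected, $S$ does not separate $G$, so there is an edge $xy\in E(G)$ with $x\in X$, $y\in Y$, and necessarily $xy\notin E(H)$. Because $\delta(G)>n/2$ while $H$ uses only $r$ edges at each vertex, there is an enormous supply of non-$H$ edges of $G$, so I can extend $xy$ to an $H$-alternating cycle contained in $G$ and crossing the cut $(X,Y)$; toggling it yields a new $r$-factor with a crossing edge, and with the right choice of cycle this strictly raises the number of internally disjoint $X$--$Y$ paths, contradicting the extremal choice of $H$.

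The hard part is precisely that last sentence: I must guarantee that some admissible toggle makes monotone progress---genuinely enlarging the minimum separator, or reducing the number of minimum separators---rather than merely relocating the deficiency to a new cut. Making this work needs a quantitative count of crossing non-$H$ edges of $G$ (this is where the surplus $\tfrac{r}{2}$ of $\delta(G)$ over $\tfrac n2$ enters, and where the exact constant $\tfrac{n+r-2}{2}$ is pinned down) together with a Menger-type conversion of extra crossing edges into extra internally disjoint paths. I expect the clean way to organise it is a stability dichotomy: when $G$ is far from the extremal configuration of two near-balanced cliques it expands, crossing edges are plentiful, and the toggles go through directly; when $G$ is near-extremal one instead builds the connected $r$-factor by hand across the balanced cut. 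The near-extremal regime, where the surplus is only just sufficient, is where I expect the main fight.
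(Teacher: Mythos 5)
Your route---fix an $r$-factor and upgrade its connectivity by toggling alternating cycles---is genuinely different from the paper's, but as written it has a gap at exactly the point you flag yourself, and that point is not a technicality: it is the entire content of the theorem. Toggling an $H$-alternating cycle through a crossing $G$-edge $xy$ of a minimum $H$-separator $(X,S,Y)$ deletes an $H$-edge at every second step of the cycle, and those deletions are spread over vertices you do not control; they can create new separators of size at most $r-1$ elsewhere, and the cycle may recross the cut and destroy other crossing edges of $H$. The potential ``maximise $\kappa(H)$, then minimise the number of minimum separators'' is not shown to be monotone under any toggle you can guarantee to exist, and the Menger-type claim that one additional crossing edge yields one additional internally disjoint $X$--$Y$ path is false in general, since the new path may be forced back through $S$. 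You also explicitly defer the near-extremal regime to a by-hand construction without giving it. So the proposal is a plausible programme rather than a proof; the existence of the initial $r$-factor (via factor theorems, or Nash-Williams' many edge-disjoint Hamilton cycles) is the only step that is actually complete.

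For contrast, the paper sidesteps the connectivity upgrade entirely: it builds the spanning subgraph as an explicit template---an $\tfrac r2$-blow-up of a Hamilton cycle, locally modified by small absorber gadgets when divisibility fails---whose $r$-regularity and $r$-connectivity are verified once from the template itself. The proof is a three-way stability argument: a non-extremal case via the regularity and blow-up lemmas, and two extremal cases (two near-cliques joined by a carefully chosen crossing matching; a near-complete balanced bipartite graph) handled by hand. The cost is the regularity lemma and hence an enormous $n_0$; the benefit is that no exchange argument for vertex-connectivity is ever needed. If your toggling scheme could be completed it would give a far better $n_0$, but the progress measure for the toggles is precisely where such arguments for vertex-connectivity (unlike edge-connectivity, where counting crossing edges suffices) tend to break down, so the missing step should be regarded as the heart of the problem rather than a detail to be filled in.
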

Note that for $r \ge 2$ an $n$-vertex graph $G$ with minimum degree $\delta(G) \ge \frac{n+r-2}{2}$ always is $r$-connected, whereas one can easily come up with examples certifying the optimality of this result (e.g.\ two $K_{(n+r)/2}$'s sharing $r$ vertices).
The theorem above asserts that  there are minimal $r$-connected subgraphs of $G$ which are in fact $r$-regular.
Observe that for $r=2$ this follows immediately from Dirac's theorem~\cite{dirac} with $n_0=3$, as  a Hamilton cycle  is $2$-regular and $2$-connected.
Owing to the use of the regularity lemma the $n_0$ given by Theorem~\ref{thm:main} will be very large.

In the following we briefly introduce some notation and discuss possible candidates for $r$-regular $r$-connected subgraphs that will be found  in $G$ by Theorem~\ref{thm:main}.  
The \emph{$t$-blow-up of a graph $F$} is obtained by replacing every vertex by $t$ vertices and every edge by a complete bipartite graph $K_{t,t}$.
Let $C_n$ be the cycle on $n$ vertices and $P_n$ the $n$-vertex path.
We denote by $C_n(t)$ and $P_n(t)$ the $t$-blow-up of $C_n$ and $P_n$, respectively.
We use a similar definition for odd values of $t$.
We denote by $C_n(t-\tfrac12)$ the $t$-blow-up of $C_n$ for $n$ even, where every other edge only gets a $K_{t,t}$ minus a perfect matching.
Similarly, $P_n(t-\tfrac12)$ is the $t$-blow-up of the $n$-vertex path, where every other edge (starting with the first) only gets a $K_{t,t}$ minus a perfect matching.
We also call these the \emph{$(t-\tfrac 12)$-blow-ups}.
Note that $C_n(t)$ is $2t$-regular and $C_n(t-\tfrac 12)$ is $(2t-1)$-regular.

In most cases in our proof of Theorem~\ref{thm:main} we will be able to find a spanning copy of an $\tfrac r2$-blow-up of a cycle, while allowing other structures with all but a small fraction of vertices in $\tfrac r2$-blow-ups of paths (see Section~\ref{sec:o_constructions} for more details).
However, when $n$ is even and not divisible by $4$, the graph $G$ obtained by taking the disjoint union of two cliques $K_{n/2-2}$ and adding four additional vertices that are connected to all previous $n-4$ vertices cannot contain a copy of $C_n(4)$. 
Finally, observe, that the bandwidth theorem~\cite{BST09} guarantees the asymptotically best minimum degree condition $\tfrac n2 +o(n)$. Thus, Theorem~\ref{thm:main} improves this asymptotic bound to the exact one.

Beyond these blow-ups it would be interesting to study the minimum degree threshold for other spanning structures that can be obtained by identifying vertices or edges of copies of a small graph on a cycle.
In particular, when the small graph is not bipartite, this threshold can depend on its chromatic number or critical chromatic number similarly as when taking disjoint copies (see~\cite{KO09}).

\subsection{Organisation of the paper}
The paper is structured as follows. In Section~\ref{sec:tools} we collect the essential tools (regularity and blow-up lemmas), while Section~\ref{sec:overview} provides a proof overview, which consists of three cases (extremal case I, extremal case II and non-extremal case). These cases are dealt with in the subsequent Sections~\ref{sec:extremal1},~\ref{sec:extremal2} and~\ref{sec:non-extremal}.

\section{Tools and Notation}\label{sec:tools}
For standard graph theoretic definitions we refer to Bollob\'as~\cite{Bolbook98}.  
The main tools are Szemerédi's regularity lemma~\cite{Sze_regularity} and the blow-up lemma by Komlós, Sárközy, and Szemerédi~\cite{KSS_Blowup}.
For this let $G=(V,E)$ be a graph.
For any two sets $A,B \subseteq V$ we denote by $e_G(A,B)$ the number of edges of $G$ with one endpoint in $A$ and one in $B$.
Then the \emph{density} $d(A,B)$ between these sets is $\frac{e(A,B)}{|A||B|}$.

\begin{definition}
    The pair $(A,B)$ is \emph{$\varepsilon$-regular} if for all $X \subseteq A$, $Y \subseteq B$ with $|X| \ge \varepsilon |A|$, $|Y| \ge \varepsilon |B|$ we have $|d(X,Y)-d(A,B)| \le \varepsilon$.    
\end{definition}

The following lemma guarantees that (not too small) induced subgraphs of  $\varepsilon$-regular pairs are still regular (although with a slightly worse parameter).
\begin{lemma}[Slicing lemma]
\label{lem:slicing}
Let $(A,B)$ be an $\eps$-regular pair with $d(A,B)=d$, let $\tfrac 12 \ge \gamma > \eps$, and $A' \subseteq A$ and $B' \subseteq B$ be of size $|A'| \ge \gamma |A|$ and $|B'|\ge \gamma |B|$.
Then $(A',B')$ is $2\eps$-regular pair with $d(A,B) \ge d'$, where $|d-d'|\le \eps$.\qed
\end{lemma}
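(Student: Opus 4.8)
The plan is to derive both conclusions directly from the definition of $\eps$-regularity of the original pair $(A,B)$, using $A'$, $B'$, and their subsets as test sets. The whole point is that any subset which is ``large'' relative to $A'$ or $B'$ is still large enough relative to $A$ or $B$ to be controlled by the regularity of $(A,B)$; both claims are then just a matter of applying the definition once and keeping track of the errors.

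First I would settle the density estimate. Since $|A'| \ge \gamma|A| > \eps|A|$ and $|B'| \ge \gamma|B| > \eps|B|$, the sets $A',B'$ are themselves an admissible choice of test sets in the definition of $\eps$-regularity for $(A,B)$. Applying that definition with $X=A'$ and $Y=B'$ gives $\abs{d(A',B') - d(A,B)} \le \eps$. Writing $d' := d(A',B')$, this is exactly the claimed bound $\abs{d - d'} \le \eps$.

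Next I would verify that $(A',B')$ is $2\eps$-regular. Fix arbitrary $X \subseteq A'$ and $Y \subseteq B'$ with $|X| \ge 2\eps|A'|$ and $|Y| \ge 2\eps|B'|$; the goal is $\abs{d(X,Y) - d(A',B')} \le 2\eps$. Viewing $X \subseteq A$ and $Y \subseteq B$, I estimate their relative sizes in the original pair: $|X| \ge 2\eps|A'| \ge 2\eps\gamma|A|$ and likewise $|Y| \ge 2\eps\gamma|B|$. The key numeric point is that these lower bounds are at least $\eps|A|$ and $\eps|B|$, so that $X,Y$ remain admissible test sets for $(A,B)$; this is precisely where the relation between $\gamma$ and $\tfrac12$ enters. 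Granting this, $\eps$-regularity of $(A,B)$ yields $\abs{d(X,Y) - d(A,B)} \le \eps$, and combining with the density estimate of the previous paragraph through the triangle inequality gives $\abs{d(X,Y) - d(A',B')} \le \abs{d(X,Y) - d} + \abs{d - d'} \le 2\eps$, as required.

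There is no serious obstacle here: this is the standard slicing lemma, and the only thing that needs care is the relative-size bookkeeping — checking that test sets which are large inside $A'$ (resp.\ $B'$) are still large enough inside $A$ (resp.\ $B$) to invoke $\eps$-regularity, and that the two $\eps$-sized errors (one from passing to $X,Y$, one from passing between $d$ and $d'$) combine to the claimed $2\eps$. The hypotheses $\gamma > \eps$ together with the bound tying $\gamma$ to $\tfrac12$ are exactly what make this accounting go through.
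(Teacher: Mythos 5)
The paper offers no proof of this lemma (it is stated with a \qed as a standard fact), so there is nothing to compare against except the textbook argument, which is indeed the route you take: use $A',B'$ as test sets to control $d'$, then use subsets $X\subseteq A'$, $Y\subseteq B'$ as test sets for $(A,B)$ and combine the two $\eps$-errors by the triangle inequality. The density estimate $|d-d'|\le\eps$ is fine, since $\gamma>\eps$ makes $A',B'$ admissible.

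However, your key numeric claim in the regularity step does not hold under the stated hypotheses. You assert that $|X|\ge 2\eps|A'|\ge 2\eps\gamma|A|$ is at least $\eps|A|$, ``precisely where the relation between $\gamma$ and $\tfrac12$ enters.'' But $2\eps\gamma\ge\eps$ is equivalent to $\gamma\ge\tfrac12$, whereas the lemma assumes $\tfrac12\ge\gamma>\eps$ — the inequality points the wrong way. For $\gamma$ close to $\eps$ one only gets $|X|\ge 2\eps\gamma|A|\approx 2\eps^2|A|\ll\eps|A|$, so $X$ and $Y$ need not be admissible test sets for the $\eps$-regularity of $(A,B)$ and the argument breaks. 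The conclusion your method actually yields is $\max\{\eps/\gamma,\,2\eps\}$-regularity, which equals $2\eps$ only when $\gamma\ge\tfrac12$. Almost certainly the lemma's hypothesis is a typo for $\gamma\ge\tfrac12$ (or the conclusion should read $(\eps/\gamma)$-regular), but as written your proof papers over the one inequality that fails rather than checking its direction; you should have flagged that the stated bound on $\gamma$ is incompatible with the stated regularity parameter.
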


When working with the regular pairs, one often needs a somewhat stronger concept of super-regularity.
\begin{definition}
    The pair $(A,B)$ is an \emph{$(\varepsilon,\delta)$-super-regular} pair if it is $\varepsilon$-regular and $\deg(a,B) \ge \delta |B|$, $\deg(b,A) \ge \delta |A|$ for all $a \in A$, $b \in B$.
\end{definition}

The next lemma asserts that there every $\varepsilon$-regular pair contains an almost spanning super-regular pair.
\begin{lemma}
\label{lem:superreg}
Let $(A,B)$ be an $\eps$-regular pair with $d(A,B)=d$.
Then there exists $A'\subseteq A$ and $B' \subseteq B$ with $|A'|\ge (1-\eps) |A|$ and $|B'| \ge (1-\eps) |B|$ such that $(A',B')$ is a $(2\eps,d-3\eps)$-super-regular pair. \qed
\end{lemma}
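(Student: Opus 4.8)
The plan is the standard argument of deleting the low-degree vertices from each side. Set
\[
A_{\mathrm{bad}} = \cbc{a \in A : \deg(a,B) < (d-\eps)\abs{B}}
\qand
B_{\mathrm{bad}} = \cbc{b \in B : \deg(b,A) < (d-\eps)\abs{A}},
\]
and put $A' = A \setminus A_{\mathrm{bad}}$ and $B' = B \setminus B_{\mathrm{bad}}$. The two things to verify are that these sets are large (which simultaneously yields the size bounds and lets the slicing lemma control regularity) and that after the deletion every surviving vertex still has large degree into the other surviving side.

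First I would bound $\abs{A_{\mathrm{bad}}}$ and $\abs{B_{\mathrm{bad}}}$ directly from $\eps$-regularity. Suppose $\abs{A_{\mathrm{bad}}} \ge \eps\abs{A}$. Taking $X = A_{\mathrm{bad}}$ and $Y = B$ in the definition of regularity (note $\abs{Y} = \abs{B} \ge \eps\abs{B}$), every vertex of $X$ sends fewer than $(d-\eps)\abs{B}$ edges to $B$, so $d(X,Y) < d-\eps = d(A,B)-\eps$, contradicting $\abs{d(X,Y)-d(A,B)} \le \eps$. Hence $\abs{A_{\mathrm{bad}}} < \eps\abs{A}$, and symmetrically $\abs{B_{\mathrm{bad}}} < \eps\abs{B}$. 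This immediately gives $\abs{A'} \ge (1-\eps)\abs{A}$ and $\abs{B'} \ge (1-\eps)\abs{B}$, the required size bounds.

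For the regularity of $(A',B')$ I would simply invoke the slicing lemma (Lemma~\ref{lem:slicing}) with $\gamma = \tfrac12$: assuming as usual $\eps < \tfrac12$ we have $\abs{A'} \ge (1-\eps)\abs{A} \ge \tfrac12\abs{A}$ and likewise for $B'$, so $(A',B')$ is $2\eps$-regular. It then remains to check the degree condition. Fix $a \in A'$; then $\deg(a,B) \ge (d-\eps)\abs{B}$, and since at most $\abs{B_{\mathrm{bad}}} < \eps\abs{B}$ of its neighbours were deleted,
\[
\deg(a,B') \ge \deg(a,B) - \abs{B_{\mathrm{bad}}} \ge (d-\eps)\abs{B} - \eps\abs{B} = (d-2\eps)\abs{B}.
\]
The only subtlety, and the reason the target parameter is $d-3\eps$ rather than $d-2\eps$, is the rescaling from $\abs{B}$ to $\abs{B'}$: if $d-3\eps \le 0$ the claimed bound holds trivially, while if $d-3\eps > 0$ then $d-2\eps > 0$ and using $\abs{B} \ge \abs{B'}$ we get $\deg(a,B') \ge (d-2\eps)\abs{B'} \ge (d-3\eps)\abs{B'}$. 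The same computation applied symmetrically to each $b \in B'$ shows $\deg(b,A') \ge (d-3\eps)\abs{A'}$, completing the verification that $(A',B')$ is $(2\eps,d-3\eps)$-super-regular. There is no real obstacle here beyond this bookkeeping of the extra $\eps$ lost when passing from the degree relative to $\abs{B}$ to the degree relative to $\abs{B'}$.
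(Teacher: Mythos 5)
Your proof is correct. The paper states Lemma~\ref{lem:superreg} without any proof (it is quoted as a standard fact, with the \qed\ placed directly in the statement), and your argument is exactly the standard one this implicitly relies on: bounding the low-degree sets by applying $\eps$-regularity to $(A_{\mathrm{bad}},B)$, invoking the slicing lemma for $2\eps$-regularity, and the degree bookkeeping --- including the correct handling of the trivial case $d-3\eps\le 0$ and of the rescaling from $\abs{B}$ to $\abs{B'}$, which is where the stated parameter $d-3\eps$ (rather than the $d-2\eps$ you actually establish) leaves harmless slack.
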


We will use the following degree form of the regularity lemma by Komlós and Simonovits~\cite{koml_simon}.

\begin{lemma}[Regularity lemma, degree version]
    \label{lem:regularity}
    For every $\varepsilon>0$ there exists an integer $M$ such that for any graph $G$ and $d \in [0,1]$ there is a partition of $V(G)$ into $\ell+1 \le M$ clusters $V_0,\dots,V_{\ell}$ and a subgraph $G'$ of $G$ such that
    \begin{enumerate}[label=\upshape(P\arabic*)]
        \item \label{reg:size} $|V_0| \le \varepsilon |V(G)|$ and  $|V_i| = L \le \varepsilon |V(G)|$ for all $1 \le i \le \ell$.
        \item \label{reg:deg} $\deg_{G'}(v) \ge \deg_G(v) - (d+\varepsilon) |V|$ for all $v \in V$.
        \item \label{reg:ind} For $1 \le i \le \ell$ the set $V_i$ is independent in $G'$.
        \item \label{reg:reg} For $1 \le i < j \le \ell$ the pair $(V_i,V_j)$ is $\varepsilon$-regular in $G'$ and has density $0$ or $d$.
    \end{enumerate}
\end{lemma}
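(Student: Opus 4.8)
The plan is to deduce this degree version from the original (partition) form of Szemer\'edi's regularity lemma~\cite{Sze_regularity} by a standard cleaning procedure. First I would fix an auxiliary parameter $\eps' \le \eps^2$ together with a lower bound $m_0 \ge 1/\eps'$ on the number of clusters, and apply the partition form with parameters $\eps'$ and $m_0$. This yields an equitable partition $V_0,V_1,\dots,V_\ell$ of $V(G)$ in which the number of parts is bounded by a constant $M$ depending only on $\eps$, with $|V_0| \le \eps' |V|$, with $V_1,\dots,V_\ell$ of a common size $L \le |V|/m_0 \le \eps'|V|$, and with all but at most $\eps'\binom{\ell}{2}$ of the pairs $(V_i,V_j)$ being $\eps'$-regular. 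After the final adjustment of $V_0$ below, this gives \ref{reg:size}.

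Next I would define $G'$ by deleting from $G$ precisely the following edges: all edges lying inside a cluster $V_i$ with $i\ge 1$ (this gives \ref{reg:ind}); all edges of pairs $(V_i,V_j)$ that are irregular; and all edges of $\eps'$-regular pairs whose density is smaller than $d$. Any surviving pair $(V_i,V_j)$ is then $\eps'$-regular of density either $0$ (all edges removed) or at least $d$; since an $\eps'$-regular pair is a fortiori $\eps$-regular, this is exactly what \ref{reg:reg} asks for. The edges incident to $V_0$ may be left untouched, as they are irrelevant to \ref{reg:ind} and \ref{reg:reg}.

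The heart of the argument is the degree condition \ref{reg:deg}, and this is the step I expect to be the main obstacle. For a vertex $v\in V_i$ the deleted edges incident to $v$ split into four groups: edges inside $V_i$ (at most $L\le \eps'|V|$), edges to $V_0$ (at most $|V_0|\le \eps'|V|$), edges in irregular pairs through $V_i$, and edges in low-density pairs through $V_i$. For the low-density pairs in which $v$ is \emph{typical}, i.e.\ $\deg_G(v,V_j)\le (d(V_i,V_j)+\eps')|V_j|$, the total loss is at most $\sum_j d(V_i,V_j)|V_j| + \eps'|V| \le d|V| + \eps'|V|$, because the densities of low-density pairs are below $d$. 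The real difficulty is a vertex that is \emph{atypical}, with degree far exceeding the pair density, for many clusters at once: such a vertex can lose almost all of its edges, so a naive per-vertex bound genuinely fails for it.

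I would resolve this by double counting followed by absorption into the exceptional set. By $\eps'$-regularity, each low-density pair has fewer than $\eps'L$ vertices that are atypical for it, and at most $\eps'\binom{\ell}{2}$ pairs are irregular. Hence, applying Markov's inequality to the corresponding incidence counts and using $\eps'\le\eps^2$, the number of clusters lying in more than $\eps\ell$ irregular pairs, together with the number of vertices that are atypical for more than $\eps\ell$ low-density clusters, is at most $\eps|V|$. Moving all these ``bad'' vertices into $V_0$ keeps $|V_0|\le\eps|V|$, after re-balancing the clusters at a further cost of $O(\eps')|V|$, and \ref{reg:deg} holds trivially for them since their incident edges were never deleted. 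Every surviving vertex loses at most $\eps\ell\cdot L\le\eps|V|$ edges from irregular and atypical pairs combined, and at most $d|V|+O(\eps')|V|$ from the remaining typical low-density ones; choosing the constants so that all error terms total at most $\eps|V|$ then gives $\deg_{G'}(v)\ge\deg_G(v)-(d+\eps)|V|$, which is \ref{reg:deg} and completes the lemma.
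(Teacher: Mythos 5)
The paper offers no proof of this lemma at all: it is quoted as a black box from Komlós and Simonovits~\cite{koml_simon}, so there is no in-paper argument to compare against. Your derivation from the partition form of the regularity lemma is the standard one, and its skeleton is sound: delete intra-cluster, irregular-pair and low-density-pair edges; observe that the only vertices endangering \ref{reg:deg} are those whose cluster has many irregular partners or which are atypical for many low-density pairs; bound the number of such vertices by double counting regularity's ``few atypical vertices per pair'' and ``few irregular pairs''; and relocate them to $V_0$, where no constraint other than \ref{reg:size} and \ref{reg:deg} applies.

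Two points need tightening. First, the order of operations: you define $G'$ (deleting the edges) \emph{before} relocating the bad vertices, and then assert that \ref{reg:deg} holds for the relocated vertices ``since their incident edges were never deleted'' --- but in your ordering they \emph{were} deleted; that is precisely what made these vertices bad. Either perform the relocation and re-balancing first and only then delete edges with respect to the final partition, or explicitly restore every deleted edge incident to the final $V_0$; both fixes are harmless, since \ref{reg:ind} and \ref{reg:reg} only constrain $V_1,\dots,V_\ell$ and edges meeting $V_0$ never enter those conditions. Second, the arithmetic as stated does not close: with thresholds $\eps\ell$ a \emph{surviving} vertex may still lose up to $\eps|V|$ edges in irregular pairs plus $\eps|V|$ in atypical low-density pairs on top of the $(d+\eps')|V|$ from typical ones, exceeding $(d+\eps)|V|$, and the original $V_0$ together with the two relocated batches exceeds $\eps|V|$ in \ref{reg:size}. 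Your closing remark that the constants can be tuned is correct --- thresholds of order $\tfrac{\eps}{10}\ell$ with $\eps'\le(\eps/10)^2$ suffice --- but the tuning should be done explicitly. Finally, re-balancing the clusters perturbs pair densities by $O(\eps')$, so \ref{reg:reg} should be read (as is conventional, and as the paper implicitly does) as ``density $0$ or at least $d-\eps$'', or else the deletion threshold should be taken as $d+\eps'$. None of these is a conceptual gap; they are bookkeeping repairs to an otherwise correct standard argument.
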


The blow-up lemma allows us to embed spanning subgraphs with bounded degree. We will use the following special case deduced from~\cite[Remark~13]{KSS_Blowup}.

\begin{lemma}[Bipartite blow-up lemma]
    \label{lem:blowup}
    For each $d,c>0$ and integer $\Delta$ there exist $\varepsilon>0$, $\alpha>0$ and integer $n_0$ such that the following holds for any $n \ge n_0$.
    Let $H$ be a bipartite graphs on classes $A$ and $B$ with $|A|=|B|=n$ such that $(A,B)$ is a $(\varepsilon,d)$-super-regular pair and let $G$ be a bipartite graph on classes $X$ and $Y$ with $|X|=|Y|=n$ that has maximum degree bounded by $\Delta$.
    Moreover, for any $X' \subseteq X$ and $Y' \subseteq Y$ with $|X'|,|Y'| \le \eps n$ let $A_x \subseteq A$ and $B_y \subseteq B$ for each $x \in X'$ and $y \in Y'$ with $|A_x|,|B_x| \ge cn$.
    Then there exists an embedding of $G$ into $H$ such that all $x \in X'$ and $y \in Y'$ are embedded into $A_x$ and $B_y$, respectively.
\end{lemma}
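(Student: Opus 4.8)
The plan is to deduce this statement directly from the general blow-up lemma of Komlós, Sárközy and Szemerédi~\cite{KSS_Blowup} by specialising the underlying reduced graph to a single edge. Recall that the general lemma concerns a fixed \emph{reduced} graph $R$: one replaces each vertex of $R$ by a cluster of size $n$, each edge by an $(\varepsilon,\delta)$-super-regular pair and each non-edge by an empty pair, obtaining a host graph; any graph $H$ with $\Delta(H)\le\Delta$ whose parts respect the adjacency pattern of $R$ then embeds into this host. I would apply this with $R=K_2$, so that there are exactly two clusters and a single pair to fill, which we take to be $(A,B)$.

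First I would fix the parameters. Given $d$, $c$ and $\Delta$ as in our statement, I apply the general lemma with super-regularity density lower bound $\delta:=d$, maximum degree $\Delta$, and image-restriction constant $c$; this supplies an $\varepsilon_0>0$ and an $n_0$. I then set $\varepsilon:=\varepsilon_0$, shrinking it if necessary so that $2\varepsilon\le\tfrac12$ and we stay inside the regime where our two-sided degree notion of $(\varepsilon,d)$-super-regularity coincides with the one used in~\cite{KSS_Blowup}, take $\alpha$ to be any positive constant supplied by the general lemma (it plays no role in the conclusion of the present statement, so its existence is free), and keep $n_0$.

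Next I would verify the hypotheses. The pair $(A,B)$ is $(\varepsilon,d)$-super-regular, i.e.\ $\varepsilon$-regular with every vertex retaining at least a $d$-fraction of its potential neighbours across the pair; this is exactly the super-regularity required for the unique edge of $R=K_2$. The graph $G$ on classes $X,Y$ with $|X|=|Y|=n$ and $\Delta(G)\le\Delta$ is bipartite, hence a subgraph of the full blow-up $K_{n,n}$ of $K_2$, so it respects the adjacency pattern of $R$. The general lemma therefore yields an embedding of $G$ into the host graph, that is, into $(A,B)$.

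Finally I would incorporate the image restrictions, which is the only non-cosmetic point. Remark~13 of~\cite{KSS_Blowup} augments the blow-up lemma by permitting a bounded fraction of the vertices of $H$ to be assigned prescribed target sets, provided each such set has size at least $cn$. Applying this to the at most $\varepsilon n$ vertices of $X'$ and the at most $\varepsilon n$ vertices of $Y'$, with targets $A_x$ and $B_y$ of size $\ge cn$, produces an embedding with $x\mapsto A_x$ and $y\mapsto B_y$ for all $x\in X'$, $y\in Y'$, as required. The step I expect to be the main obstacle is precisely matching the quantitative thresholds of Remark~13 — the admissible number of image-restricted vertices and the lower bound $cn$ on each target — to our hypotheses $|X'|,|Y'|\le\varepsilon n$ and $|A_x|,|B_y|\ge cn$; any loss incurred is absorbed into the choice of $\varepsilon$.
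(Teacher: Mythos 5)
Your proposal is correct and matches the paper's approach: the paper gives no proof of this lemma, stating only that it is deduced from Remark~13 of the Koml\'os--S\'ark\"ozy--Szemer\'edi blow-up lemma, and your specialisation to the reduced graph $K_2$ with the image-restriction remark handling $X'$, $Y'$ is precisely that deduction. Your observation that the unused $\alpha$ in the statement is an artifact, and that the quantitative thresholds of Remark~13 are met by shrinking $\varepsilon$, is the right way to close the only non-cosmetic gap.
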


We remark that in our application $X'$ and $Y'$ will be of constant size and all $A_x$ and all $B_y$ will be the same.

\section{Proof overview}\label{sec:overview}

The proof of Theorem~\ref{thm:main} will be split into three cases.
We now explain this case distinction and then give an overview of the proof for each of these cases.
Let $G$ be a graph with minimum degree $\tfrac{n+r-2}{2}$.
For $\alpha>0$ we call $G$ \emph{$\alpha$-extremal} if there are two sets $A,B \subseteq V(G)$ of size $(\frac{1}{2}-\alpha)n \le |A|,|B| \le \frac{n}{2}$ such that $d(A,B) < \alpha$.
With the help of the regularity lemma we will cover the case that $G$ is not $\alpha$-extremal for any $\tfrac{1}{32}>\alpha>0$ in Section~\ref{sec:non-extremal}.

So we can assume that $G$ is $\alpha$-extremal for some $\alpha>0$.
Using the minimum degree condition in $G$ it is easy to see that the sets $A$ and $B$ have to be almost disjoint or almost the same.
This implies that $G$ contains a large set that is 'almost' independent or it is 'close' to the disjoint union of two cliques $K_{n/2}$.
More precisely, there exists $\alpha'>0$ such that one of the following holds:
Either, there are two disjoint sets $A,B \subseteq V(G)$ with $(\tfrac 12 -\alpha')n \le |A|,|B| \le (\tfrac 12 +\alpha')n$ such that $G[A]$ and $G[B]$ have minimum degree $(\tfrac 12 -3\alpha')n$ and every vertex outside of $A \cup B$ has degree at least $\alpha'n$ into $A$ and $B$ -- this will be the first extremal case treated in Section~\ref{sec:extremal1}.
Or, there is one set $A \subseteq V(G)$ with $(\tfrac12 -\alpha')n \le |A| \le (\tfrac12 +\alpha')n$ is such that any vertex in $A$ has degree at least $(\tfrac12 -3\alpha')n$ into $V(G) \setminus A$ and every vertex outside of $A$ has degree at least $3 \alpha' n$ into $A$ -- this is the second extremal case treated in Section~\ref{sec:extremal2}.

Therefore, when choosing $0<\alpha< \tfrac{1}{32}$ sufficiently small for both extremal cases and the remaining cases will be `non-extremal'.
This implies Theorem~\ref{thm:main}.
In the remainder of this section we sketch the argument for each of the three cases and afterwards explain why our constructions are indeed $r$-connected.

\subsection{Non-Extremal Case}
\label{sec:o_non-extremal}

We would like to find a spanning copy of $C_k(\tfrac r2)$ in $G$, but an obvious necessary condition for this is that $v(G) \equiv 0 \pmod{2 \lceil \tfrac r2 \rceil}$.
If this condition is satisfied, we will succeed, and, otherwise, find a slightly locally modified version.
For the proof we will have constants
\begin{align*}
    \varepsilon \ll \nu \ll d \ll \beta \ll \alpha < \frac{1}{32}
\end{align*}
and $s = \lceil \tfrac r2 \rceil$.
We follow similar arguments as in~\cite{KSS_square}, which can be summarised by the following procedure:
\begin{enumerate}[label=\upshape\bf Step \arabic*]
    \item Apply regularity lemma (Lemma~\ref{lem:regularity}) with $\varepsilon$ and $d$ to obtain a regular partition of $G$.\label{step:regularise}
    \item Find $\ell$ $\varepsilon$-regular pairs $(X_i,Y_i)$ covering all but a small set $V_0$ with $|V_0| \le 20 dn$.\label{step:matching}
    \item For $i=1,\dots,\ell$ connect $Y_i$ to $X_{i+1}$ with the $\tfrac r2$-blow-up of a path that we denote by $P_i$.\label{step:connect}
    \item For $i=1,\dots,\ell$ turn $(X_i,Y_i)$ into an $(\varepsilon,d-\varepsilon)$-super-regular pair with $|X_i|=|Y_i|$, slightly increasing $V_0$ to $|V_0| \le 23 dn$.\label{step:superreg}
    \item Repeatedly take $\nu n$ vertices from $V_0$ and append them to the paths $P_i$.\label{step:absorb}
    \item For $i=1,\dots,\ell$ use blow-up lemma (Lemma~\ref{lem:blowup}) to find a spanning copy of an $\tfrac r2$-blow-up of a path in $(X_i,Y_i)$ connecting $P_{i-1}$ with $P_{i}$.\label{step:spanning}
\end{enumerate}

The index $\ell+1$ corresponds to $1$.
\ref{step:regularise} is natural and for~\ref{step:matching} it is enough to find a large matching in a graph with minimum degree close to $\frac{n}{2}$.
During the performance of~\ref{step:absorb} the degree of some vertices might get too small.
In this case we add them to a set $Q$ that we take care of before the next round.
This terminates as in every execution there are at most $3 \varepsilon n \ll \nu n$ vertices added to $Q$.
Apart from this~\ref{step:absorb} is very similar to~\ref{step:connect}, which we now sketch with more details.

Let $X$, $Y$ be the clusters that we want to connect with the $\tfrac r2$-blow-up of a path $P$.
If there is a cluster $Z$ such that $(X,Z)$ and $(Z,Y)$ are $\varepsilon$-regular pairs with density at least $d$ then we can easily find this path.
Otherwise, let $A$ be the union of all clusters $Z$ such that $(X,Z)$ is an $\varepsilon$-regular pair with density at least $d$ and $B$ the union of all clusters $Z$ for $(Y,Z)$ analogously.
By the minimum degree property in the cluster graph we get $|A|,|B| \ge (\frac{1}{2}-\alpha)n$.
As $G$ is not $\alpha$-extremal we have $d(A,B) > \alpha$.
Therefore, there exist two clusters $Z_1 \in A$ and $Z_2 \in B$ with $d(Z_1,Z_2) \ge \alpha$ and then $(X,Z_1)$, $(Z_1,Z_2)$, and $(Z_2,Y)$ are $\varepsilon$-regular pairs with density at least $d$.
Then it is again easy to find the path that we are interested in by following these three regular pairs.

We have to ensure that the end vertices of the paths always have high degree into the other cluster of the respective super-regular pair, because we want to connect them later and keep them through~\ref{step:superreg}.
Furthermore, we have to ensure that in~\ref{step:absorb} the sizes of the $(\varepsilon,d-\varepsilon)$-super-regular pairs remain balanced.
We will give the details in Section~\ref{sec:non-extremal}.

\subsection{Extremal Case I}
\label{sec:o_extremal1}

In this extremal case we will not use the regularity lemma, but the blow-up lemma will be helpful.
Recall that in this case $G$ is `close' to the union of two disjoint cliques of size roughly $\tfrac n2$ on vertex sets $A$ and $B$.
The main challenge is to find a bridge that connects both these cliques.
It is then easy to find the desired structure using the high degrees.
\begin{enumerate}[label=\upshape\bf Step \arabic*]
\item \label{stepd:bridge} In the case when $r$ is even the bridge will be a matching of size $r$ between $A$ and $B$ such that the end-vertices are well connected on their side.
The odd case is a little more delicate and we will find a matching of size $r+1$ or $r$ depending on the size of $V(G) \setminus \bc{A \cup B}$ and the parity of $A$ and $B$.
\item \label{stepd:absorb} Absorb all vertices that do not not belong to $A$ or $B$ by extending both ends of the path.
We can ensure that the left-over on each side has size divisible by $2r$.
\item \label{stepd:cover} It is easy to see that the left-over on both sides can be split into a super-regular pair and that we can cover both with the $\tfrac r2$-blow-up of a path using Lemma~\ref{lem:blowup}.
\end{enumerate}
If we take care of the end-tuples between each of the steps this gives an $r$-regular $r$-connected path-structure covering $G$.
In Section~\ref{sec:extremal1} we will give the details of the even and odd case separately.

\subsection{Extremal Case II}
\label{sec:o_extremal2}

Again, we will not use the regularity lemma in this part, but the blow-up lemma will still be helpful.
We can assume that we have a partition of $V(G)$ into $A$ and $B$ of size $(\tfrac 12 \pm \alpha)n$ such that between these sets we have minimum degree $\alpha n$ and all but at most $\alpha n$ vertices from $A$ (or $B$) have degree $|B|-\alpha n$ (or $|A|-\alpha n$) into $B$ (or $A$).
W.l.o.g.~assume that $|A|+m=\tfrac 12 n=|B|-m$, where $0 \le m \le \alpha n$.
Note that in $G[B]$ we have minimum degree at least $m+\tfrac{r-2}{2}$.
Let $s=\lceil \tfrac r2 \rceil$.
\begin{enumerate}[label=\upshape\bf Step \arabic*]
    \item \label{stepe:find} If $\Delta(G[B]) \le 2 r \alpha n$ find $m$ copies of $K_{1,s}$, such that all vertices are well connected to the other side. Otherwise, separate the vertices with higher degrees, then find copies of $K_{1,s}$, and afterwards find additional copies of $K_{1,r}$, such that the leaves are well connected.
    \item \label{stepe:absorb} Absorb these copies of $K_{1,s}$ and $K_{1,r}$ into an $r$-regular path-structure and then connect these together into one longer path-structure.
    After removing the path that we constructed we are left with sets $A_1\subseteq A$ and $B_1 \subseteq B$ with $|A_1|=|B_1|$.
    \item \label{stepe:absorb2} Absorb all vertices that do not have large degree to the other side into the path by alternating between both sides.
    After removing these vertices we are left with sets $A_2\subseteq A_1$ and $B_2 \subseteq B_1$ with $|A_2|=|B_2|$ and the property that all vertices have large degree to the other side.
    \item \label{stepe:cover} It is easy to see that $(A_2,B_2)$ is a super-regular pair and that we can cover it with the $\tfrac r2$-blow-up of a path using Lemma~\ref{lem:blowup}.
\end{enumerate}
If we take care of the end-tuples between each of the steps this gives an $r$-regular $r$-connected path-structure covering $G$.
For the first step we use the following.
\begin{lemma}
\label{lem:stars}
    For any integer $s$ there exists $\alpha>0$ such that the following holds.
    Let $G$ be an $n$ vertex graph with maximum degree $\Delta(G) \le 4 s \alpha n$ and minimum degree $\delta(G) \ge m + s - 1$, where $1 \le m \le \alpha n$.
    Then there are $2m$ pairwise disjoint copies of $K_{1,s}$ in $G$.
\end{lemma}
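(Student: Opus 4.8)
The plan is to construct the $2m$ copies of $K_{1,s}$ greedily, one at a time, deleting each star as soon as it is found and using the maximum degree bound to guarantee that these deletions never exhaust the available degrees. Fix $\alpha$ small, say $\alpha \le \frac{1}{16 s (s+1)}$. I build stars iteratively: as long as some not-yet-used vertex $v$ still has at least $s$ unused neighbours, I pick $v$ as a new center, choose $s$ of its unused neighbours as leaves, and remove these $s+1$ vertices. Since each $K_{1,s}$ occupies $s+1$ vertices, after $k$ steps exactly $k(s+1)$ vertices have been removed. Because $m \ge 1$, the minimum degree is at least $s$, so the procedure certainly starts; the only thing to check is that it does not halt before reaching $k=2m$.

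Suppose for contradiction that the procedure halts after finding $k < 2m$ stars. Let $R$ denote the set of $\abs{R}=k(s+1)$ removed vertices and $S=V(G)\setminus R$ the remaining ones. Halting means that every $v\in S$ has at most $s-1$ neighbours inside $S$, hence at least $\deg_G(v)-(s-1)\ge m$ of its neighbours lie in $R$. Double counting the edges between $R$ and $S$ therefore gives
\[
 m\,\abs{S} \;\le\; e(R,S) \;=\; \sum_{u\in R}\abs{N(u)\cap S} \;\le\; \abs{R}\,\Delta(G).
\]
On the one hand $\abs{S}=n-k(s+1)\ge n-2m(s+1)\ge n\bc{1-2\alpha(s+1)}\ge \tfrac n2$ for $\alpha$ small. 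On the other hand $\abs{R}\,\Delta(G) \le 2m(s+1)\cdot 4s\alpha n = 8 s (s+1)\,\alpha\, m\, n$. Combining the two bounds and dividing by $mn$ yields $\tfrac12 \le 8 s(s+1)\alpha$, i.e.\ $\alpha \ge \frac{1}{16 s(s+1)}$, contradicting the choice of $\alpha$. Hence the procedure runs until $2m$ pairwise disjoint stars have been produced.

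The only real content is this counting step, and the place where the hypotheses enter is instructive: the maximum degree bound is exactly what prevents the greedy process from stalling. If a few vertices were allowed very large degree, then deleting a single star could knock many vertices below the threshold $s$ (indeed, a single large star $K_{1,n-1}$ already fails to contain two disjoint edges), so the bound $\Delta(G)\le 4s\alpha n$ is essential: it forces the total degree loss $e(R,S)$ to be spread out and small relative to the at least $m$ neighbours that each surviving low-degree vertex must have lost to $R$. No regularity or probabilistic input is needed; the whole argument is a deterministic greedy deletion combined with this one inequality, so the main obstacle is merely choosing $\alpha$ small enough to make the two estimates for $e(R,S)$ incompatible.
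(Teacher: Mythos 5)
Your proof is correct and follows essentially the same strategy as the paper's: a greedy construction of disjoint stars in which the maximum degree bound $\Delta(G)\le 4s\alpha n$ controls the total damage done by deleting each $K_{1,s}$, finished off by an edge count. The differences are cosmetic --- you double count the edges across the cut $(R,S)$, which makes the $s-1$ in the minimum degree cancel cleanly against the halting threshold and lets you avoid the paper's case distinction on whether $m\ge s+1$ (the paper instead lower-bounds the number of edges surviving inside $V(G)\setminus R$ and extracts a vertex of degree at least $s$ there); the only nitpick is that with $\alpha=\tfrac{1}{16s(s+1)}$ exactly your final comparison reads $\tfrac12\le\tfrac12$, so you should either take $\alpha$ strictly smaller or note that $k\le 2m-1$ and $|S|>\tfrac n2$ make the chain of inequalities strict.
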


The proof of this lemma and the second extremal case will be given in Section~\ref{sec:extremal2}.

\subsection{Constructions}
\label{sec:o_constructions}
First recall that the $\tfrac r2$-blow-up of a cycle is $r$-regular and also $r$-connected.
It will not always be possible to construct this, but it will be the basic building block.
We might need to absorb some exceptional vertices, for example, when $n$ is not divisible by $r$.
In the case when $r$ is even we then remove a perfect matching from one $K_{s,s}$ and add one vertex that is connected to all $2s=r$ vertices that just lost one neighbour (c.f.~Figures~\ref{fig:absorber_even},~\ref{fig:absorbK1s}, and~\ref{fig:Step54}).
The resulting graph is still $r$-connected, because we can not disconnect this part of the cycle by removing less than $\tfrac r2$ vertices.
A similar construction will be used in the case when $r$ is odd (c.f.~Figures~\ref{fig:absorber_odd},~\ref{fig:absorbK1s}~and~\ref{fig:Step55}) that also preserves $r$-connectivity.
Apart from this, we also have to connect to $\tfrac r2$-blow-ups of cycles by using at most $r$ edges between them (c.f.~Step~\ref{stepd:bridge} of Section~\ref{sec:o_extremal1}).
We will only need to take care of a small linear fraction of the vertices from $G$ and, therefore, almost all vertices are in the $\tfrac r2$-blow-up of a path.

\section{Extremal Case I}
\label{sec:extremal1}
In this section we deal with the first extremal case.
We will not use the regularity lemma in this part, but the blow-up lemma will still be helpful.

\begin{proof}[Proof of Extremal Case I]
Let $r \geq 3$ be an integer, let $\eps>0$ be given by Lemma~\ref{lem:blowup} on input $\tfrac 12$, $\tfrac 12$, and $r$ and let $0<\alpha \le \eps (1000 r^2)^{-1}$.
Let $G$ be an $n$-vertex graph with $\delta(G) \geq \tfrac {n+r-2}{2}$ and let $A,B \subseteq V(G)$ with $(\tfrac 12-\alpha)n \le |A|,|B| \le (\tfrac 12+\alpha)n$ such that $G[A]$ and $G[B]$ have minimum degree $(\tfrac 12-3\alpha)n$ and every vertex in $C=V(G) \setminus (A \cup B)$ has degree at least $\alpha n$ into $A$ and $B$.
Our goal is to find an $r$-regular, $r$-connected spanning subgraph in $G$ provided that $n$ is large enough.  

\subsection{The even case}

Assume that $r$ is even.
We begin by constructing $\tfrac r2$ bridges of size 2 between $A$ and $B$ (\ref{stepd:bridge} of Section \ref{sec:o_extremal1}). A visualisation can be found in Figure~\ref{fig:bridge_graph_h}.

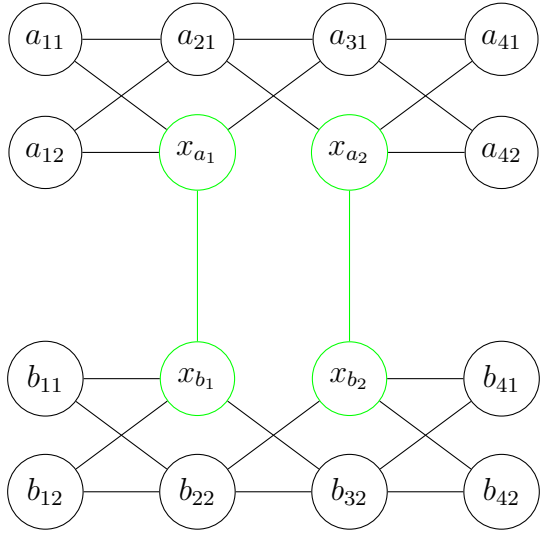
\begin{figure}
    \centering
    \begin{tikzpicture}
        \node[circle, draw=black] (a_11) at (0, 3) {$a_{11}$};
        \node[circle, draw=black] (a_12) at (0, 1.5) {$a_{12}$};
        \node[circle, draw=black] (b_11) at (0, -1.5) {$b_{11}$};
        \node[circle, draw=black] (b_12) at (0, -3) {$b_{12}$};
        
        \node[circle, draw=black] (a_21) at (2, 3) {$a_{21}$};
        \node[circle, draw=green] (xa1) at (2, 1.5) {$x_{a_1}$};
        \node[circle, draw=green] (xb1) at (2, -1.5) {$x_{b_1}$};
        \node[circle, draw=black] (b_22) at (2, -3) {$b_{22}$};
        
        \node[circle, draw=black] (a_31) at (4, 3) {$a_{31}$};
        \node[circle, draw=green] (xa2) at (4, 1.5) {$x_{a_2}$};
        \node[circle, draw=green] (xb2) at (4, -1.5) {$x_{b_2}$};
        \node[circle, draw=black] (b_32) at (4, -3) {$b_{32}$};
        
        \node[circle, draw=black] (a_41) at (6, 3) {$a_{41}$};
        \node[circle, draw=black] (a_42) at (6, 1.5) {$a_{42}$};
        \node[circle, draw=black] (b_41) at (6, -1.5) {$b_{41}$};
        \node[circle, draw=black] (b_42) at (6, -3) {$b_{42}$};

        \path[-] (a_11) edge[draw=black] (a_21);
        \path[-] (a_12) edge[draw=black] (xa1);
        \path[-] (a_11) edge[draw=black] (xa1);
        \path[-] (a_12) edge[draw=black] (a_21);
        
        \path[-] (xa1) edge[draw=black] (a_31);
        \path[-] (a_21) edge[draw=black] (xa2);
        \path[-] (a_21) edge[draw=black] (a_31);
        
        \path[-] (a_31) edge[draw=black] (a_41);
        \path[-] (xa2) edge[draw=black] (a_42);
        \path[-] (a_31) edge[draw=black] (a_42);
        \path[-] (xa2) edge[draw=black] (a_41);

        \path[-] (b_11) edge[draw=black] (xb1);
        \path[-] (b_12) edge[draw=black] (b_22);
        \path[-] (b_11) edge[draw=black] (b_22);
        \path[-] (b_12) edge[draw=black] (xb1);
        
        \path[-] (b_22) edge[draw=black] (xb2);
        \path[-] (xb1) edge[draw=black] (b_32);
        \path[-] (b_22) edge[draw=black] (b_32);
        
        \path[-] (xb2) edge[draw=black] (b_41);
        \path[-] (b_32) edge[draw=black] (b_42);
        \path[-] (xb2) edge[draw=black] (b_42);
        \path[-] (b_32) edge[draw=black] (b_41);
        
        \path[-] (xb2) edge[draw=green] (xa2);
        \path[-] (xb1) edge[draw=green] (xa1);

    \end{tikzpicture}
    \caption{Bridge between the sets $A$ and $B$ in the special case $r = 4$.}
    \label{fig:bridge_graph_h}
\end{figure}

\begin{claim}\label{lemma_matching_edges}
Suppose $\delta(G) \geq \tfrac {n+r-2}{2}$ and $\abs{A} \leq \abs{B}$. There is a matching $(x_{a_1}x_{b_1}, \ldots, x_{a_r}x_{b_r})$  such that $\abs{N(x_{a_i}) \cap A} \geq \tfrac n5$ and $\abs{N(x_{b_j}) \cap B} \geq \tfrac n5$ for all $i, j \leq r$.
\end{claim}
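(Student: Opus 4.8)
\emph{Plan.} The plan is to reformulate the claim as finding $r$ vertex-disjoint edges, each joining a vertex with at least $n/5$ neighbours in $A$ to a vertex with at least $n/5$ neighbours in $B$. Accordingly I set $S_A = \{v \in V(G) : |N(v)\cap A| \ge n/5\}$ and $S_B = \{v \in V(G): |N(v) \cap B| \ge n/5\}$; once I have $r$ disjoint edges with one endpoint in $S_A$ and the other in $S_B$, labelling the $S_A$-endpoints $x_{a_i}$ and the $S_B$-endpoints $x_{b_i}$ proves the claim. First I would record three easy facts. Since $G[A]$ and $G[B]$ have minimum degree $(\tfrac12-3\alpha)n \ge n/5$ (as $\alpha$ is tiny), we have $A \subseteq S_A$ and $B \subseteq S_B$. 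Since $|A|,|B| \ge (\tfrac12-\alpha)n$, the set $C = V(G)\setminus(A\cup B)$ has size $|C| \le 2\alpha n$; hence every $w \in C$ satisfies $|N(w)\cap A| + |N(w)\cap B| \ge \delta(G) - |C| \ge \tfrac12 n$, so $\max$ of the two terms is at least $n/4$ and thus $w \in S_A \cup S_B$. Therefore $V(G) = S_A \cup S_B$, and I may work entirely with $S_A$, $S_B$ and $T := S_A \cap S_B$.

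The two-clique extremal example (two copies of $K_{(n+r)/2}$ sharing $r$ vertices) shows that there may be no edges between $A$ and $B$ at all, so the bridge must be allowed to run through the vertices that see both sides, i.e.\ through $T$. I would split on $|T|$. If $|T| \ge r$, I pick $r$ distinct vertices $z_1,\dots,z_r \in T$ and greedily choose distinct neighbours $a_1,\dots,a_r \in A$ with $z_i a_i \in E(G)$: each $z_i$ has at least $n/5$ neighbours in $A$, far more than the at most $2r$ vertices already used, so such a choice (a system of distinct representatives) exists. Each edge $z_i a_i$ joins $a_i \in A \subseteq S_A$ to $z_i \in T \subseteq S_B$, and the $2r$ endpoints are distinct, giving the required matching.

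If instead $|T| < r$, I would exploit that $G$ is $r$-connected (which holds for $\delta(G)\ge \tfrac{n+r-2}{2}$ and $n$ large). Then $G - T$ is $(r-|T|)$-connected, and $S_A\setminus T$ and $S_B\setminus T$ partition $V(G-T)$ into two large parts; any vertex set separating them in $G-T$, together with $T$, separates $G$, and hence must have size at least $r-|T|$. By Menger's theorem there are at least $r-|T|$ vertex-disjoint $(S_A\setminus T)$--$(S_B\setminus T)$ paths, and selecting one crossing edge from each yields $r-|T|$ disjoint edges $M_0$ running directly between $S_A$ and $S_B$. Finally I match every vertex of $T$ to a private neighbour in $A$, avoiding the at most $2r$ endpoints of $M_0$ and the previously chosen vertices; this adds $|T|$ further disjoint crossing edges, for a total of at least $(r-|T|)+|T| = r$.

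The step I expect to be the main obstacle is the interplay between connectivity and matching size: the minimum vertex cut can be exactly $r$ (as in the two-clique example), so one cannot simply assert that a crossing matching has size comparable to the connectivity. The resolution, and the crucial point above, is to use the ``bridge'' vertices $T$ efficiently --- matching each of them to a private neighbour on one side rather than pairing them with one another --- so that every vertex of a minimum separator contributes a full edge to the matching. The remaining bookkeeping, namely that all chosen endpoints stay disjoint, is immediate from the bound $n/5 \gg r$ on the available degrees.
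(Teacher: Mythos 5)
Your proof is correct, but it takes a genuinely different route from the paper's. The paper reduces the claim to finding $r$ disjoint edges from $A$ to $V\setminus A$ by a direct degree count across the cut $(A, V\setminus A)$ --- a case analysis on whether $\abs{A}\le\tfrac{n-r}{2}$ or $\abs{A}=\tfrac{n-r}{2}+i$, plus a parity reduction for odd $n$ --- and then post-processes any edge whose outer endpoint lies in $C$ by rerouting to a neighbour in $B$. You instead pass to the functional cover $V=S_A\cup S_B$ (where $S_A,S_B$ are the vertices with $\ge n/5$ neighbours in $A$, resp.\ $B$), handle the overlap $T=S_A\cap S_B$ by matching each of its vertices greedily to a private neighbour in $A$, and extract the remaining $r-\abs{T}$ crossing edges from the $r$-connectivity of $G$ (itself a consequence of $\delta(G)\ge\tfrac{n+r-2}{2}$) via Menger/K\H{o}nig applied to the bipartition $(S_A\setminus T, S_B\setminus T)$ of $G-T$: any cover of the crossing edges together with $T$ would be a cutset of $G$, since both $A\setminus T$ and $B\setminus T$ are far larger than $r$. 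Your argument is symmetric in $A$ and $B$ (it never uses $\abs{A}\le\abs{B}$), avoids the case split on $\abs{A}$ and the odd-$n$ reduction, and makes transparent that the constant $r$ in the matching size is exactly the connectivity; the paper's argument is more elementary in that it needs only degree counting and no appeal to Menger. Two cosmetic points: your bound $\delta(G)-\abs{C}\ge\tfrac n2$ should read $\ge\tfrac n2-2\alpha n$ (which still yields $\max\ge\tfrac n4-\alpha n\ge\tfrac n5$), and in the Menger step you should record explicitly that both sides stay non-empty after deleting a candidate separator of size less than $r$, which holds because they contain almost all of $A$ and of $B$ respectively.
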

\begin{claimproof}
 In order to construct the matching, it suffices to find $r$ edges from $A$ to $V \setminus A$. Indeed, suppose we find the $r$ edges $a_1c_1, \ldots, a_rc_r$. If $c_i \in B$, we take this edge. If $c_i \in V \setminus (A \cup B)$, $c_i$ has either $\tfrac n5$ edges into $B$ (in this case, take edge $a_ic_i$), or it has $\tfrac n5$ edges into $A$. Let $i_1, \ldots, i_l$ be the indices such that $c_{i_j}$ does not have $\tfrac n5$ edges into $B$. By definition of $\alpha$-extremity, each $c_{i_j}$ has $\alpha n$ neighbors in $B$. Select $b_{i_1}, \ldots, b_{i_l}$ s.t. $b_{i_j} \in N(c_{i_j}) \cap B$ and $b_{i_j} \neq b_{i_k}$ for all $j \neq k$ (and being disjoint from those $c_i \in B$ (this is clearly possible). We add edges $b_{i_j}c_{i_j}$ to the matching. 
 
It remains to show that these edges exist. 
First, suppose that $n$ is even. If $\abs{A} \leq \tfrac{n-r}{2}$, the minimum degree of $ \tfrac{n+r-2}{2}$ guarantees that each vertex of $A$ needs to find at least $\tfrac{n+r-2}{2} - (\tfrac{n-r}{2} - 1) = r$ neighbors outside of $A$, hence the assertion follows.
Suppose $\abs{A} = \tfrac{n-r}{2} + i$ with $i = 1, \ldots, \tfrac r2$. In this case, $\abs{V \setminus A} = \tfrac{n+r}{2} - i$. Each vertex of $A$ finds at least $r-i$ neighbors outside of $A$. Suppose that $N(A) \setminus A$ has size at most $r-1$ (thus, all edges from $A$ into the rest of the graph belong to $r-1$ vertices). Now pick a different vertex in the complement (which exists, as $\tfrac{n+r}{2} - i \gg r$). This vertex requires $\tfrac{n+r-2}{2} - (\tfrac{n+r}{2} - i - 1) = i$ neighbors in $A$, which is a contradiction.

Now, if $n$ is odd, because the minimum degree needs to be an integer, it is at least $\tfrac{n+1+r-2}{2}$, hence upon removal of one vertex, we are left with a graph on $n' = n-1$ vertices and minimum degree at least $\tfrac{n+1+r-2}{2}-1 = \tfrac{n'+r-2}{2}$ (and $n'$ being even). Hence the assertion follows from the previous discussion.
\end{claimproof}

Therefore, Claim~\ref{lemma_matching_edges} gives us the green sub-structure of Figure \ref{fig:bridge_graph_h}. 
Now, we take two of those matching edges (think of them as being $\tfrac r2$ pairs of $2$ edges). Denote the vertices that are connected to at least $\tfrac n5$ vertices in $A$ as $x_{a_1}, x_{a_2}$. We next prove that the black structure around $x_{a_1}, x_{a_2}$ shown Figure \ref{fig:bridge_graph_h} exists.

\begin{claim}\label{bridge_graph_existence}
There are distinct vertices $a_{i,1}, \ldots, a_{i, r/2} \in A$ for $i=1,4$ and $a_{i,1}, \ldots, a_{i, r/2-1} \in A$ for $i=2,3$ with the following properties.
\begin{enumerate}
    \item The edges $a_{i, j}a_{i+1, k}$ for $i=1, 2 ,3$ and $j = 1 , \ldots, \tfrac r2$ (or $\tfrac r2 - 1$, respectively) exist,
    \item the edges $x_{a_1}a_{1, j}$ and $x_{a_2}a_{4, j}$ exist for $j = 1 , \ldots,\tfrac r2$,
    \item the edges $x_{a_1}a_{3, j}$ and $x_{a_2}a_{1, j}$ exist for $j = 1 , \ldots,\tfrac r2 -1$.
\end{enumerate}
\end{claim}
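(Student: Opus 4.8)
The plan is to build the gadget greedily, one column at a time, exploiting that $G[A]$ is almost complete while $x_{a_1}$ and $x_{a_2}$ each have linearly many neighbours in $A$.

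First I would record two quantitative inputs. Since $G[A]$ has minimum degree $(\tfrac12-3\alpha)n$ and $\abs{A}\le(\tfrac12+\alpha)n$, every vertex of $A$ has at most $4\alpha n$ non-neighbours inside $A$. Second, by Claim~\ref{lemma_matching_edges} we have $\abs{N(x_{a_1})\cap A},\abs{N(x_{a_2})\cap A}\ge \tfrac n5$. As $\alpha\le\eps(1000r^2)^{-1}$, the quantity $4r\alpha n$ is negligible compared with $\tfrac n5$.

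I would then select the vertices in the order: column~$1$ ($a_{1,1},\dots,a_{1,r/2}$), column~$2$ ($a_{2,1},\dots,a_{2,r/2-1}$), column~$3$, and finally column~$4$. Reading off the edges of Figure~\ref{fig:bridge_graph_h} that are incident to already-chosen vertices, each new vertex $a_{i,j}$ has to satisfy: (a) it lies in $A$ and differs from the fewer than $2r$ previously chosen vertices; (b) it lies in $N(x_{a_1})\cap A$ or in $N(x_{a_2})\cap A$; and (c) it is adjacent to all already-chosen vertices of the neighbouring column, of which there are at most $r$. The pool for this choice has size at least $\tfrac n5 - 4r\alpha n - 2r$, which is positive for $n$ large, so the greedy choice never gets stuck and produces the required distinct vertices together with all listed edges.

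The one point that needs care — and the only real obstacle — is (b). Because $x_{a_1}$ and $x_{a_2}$ are only guaranteed $\tfrac n5$ neighbours in $A$, their common neighbourhood in $A$ could be empty (the bound $\tfrac n5+\tfrac n5-\abs{A}$ is negative), so the argument would fail if some $a_{i,j}$ were forced to be adjacent to both green vertices. Inspecting the gadget shows this never occurs: $x_{a_1}$ is joined only to columns~$1$ and~$3$, while $x_{a_2}$ is joined only to columns~$2$ and~$4$ (so property~\textit{(3)} should read $x_{a_2}a_{2,j}$, in agreement with Figure~\ref{fig:bridge_graph_h}). Hence every vertex is tied to at most one of $x_{a_1},x_{a_2}$, its pool is a single large neighbourhood of size $\ge\tfrac n5$, and the near-completeness of $G[A]$ absorbs the remaining constant number of adjacency constraints.
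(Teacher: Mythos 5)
Your proposal is correct and is in substance the same argument as the paper's: the paper likewise chooses columns $1$ and $3$ from $N(x_{a_1})\cap A$ and then columns $2$ and $4$ from $N(x_{a_2})$ intersected with the large joint $A$-neighbourhood of the already-chosen vertices, using exactly the two quantitative inputs you isolate (the $\tfrac n5$ neighbourhoods and the at most $4\alpha n$ non-neighbours inside $A$), with your column-by-column greedy order being only a cosmetic reorganisation of the paper's two-batch selection. Your observation that item \textit{(3)} should read $x_{a_2}a_{2,j}$ is also correct: that is the version consistent with Figure~\ref{fig:bridge_graph_h} and the one the paper's proof actually establishes, and it is the reason no vertex ever needs to lie in the (possibly empty) common neighbourhood of $x_{a_1}$ and $x_{a_2}$.
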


\begin{claimproof}
We select $r - 1$ vertices $a_{1,i}$, $a_{3, j} \in N(x_{a_1}) \cap A$ arbitrarily (but disjoint from $x_{a_2}$). Those exist as $x_{a_1}$ has at least $\tfrac n5$ neighbors in $A$. Each of those vertices is connected to at least $(\tfrac 12 - 3 \alpha)n$ vertices in $A$, hence each vertex has at least $(\tfrac 12 - 3 \alpha)n - r - 1$ neighbors in $A$ that do not belong to $a_{1,i}$, $a_{3, j}$ or $x_{a_1}, x_{a_2}$.
Therefore, the joint neighborhood 
\[N := \bc{A \cap \bigcap_{i = 1}^{r/2} N(a_{1,i}) \bigcap_{j=1}^{r/2 - 1} N(a_{1,j})} \setminus \bc{\bigcup_{i=1}^{r/2} \cbc{ a_{4, i} } \bigcup_{j=1}^{r/2-1} a_{3, j} \cup \cbc{ x_{a_1}, x_{a_2} }}\]
has size at least $(\tfrac 12 - 4 r \alpha)n$. Therefore, we find 
\[\abs{ N(x_{a_2}) \cap N } \geq \frac{n}{100} \, , \]
thus the claim follows as the same token holds in $B$ as well.
\end{claimproof}
We denote the resulting collection of vertices in $A$ by $X_{A, 1}, \ldots, X_{A, r/2}$.
Clearly, each vertex in $A$ stays connected to at least $(\tfrac 12 - 4 \alpha)n$ vertices in
\[A'_{0} =  A \setminus \bc{ V(X_{A, 1}) \cup \ldots \cup V(X_{A, r/2}) } \, .\]

Next, we introduce a \textit{gluing operation} GE.
\begin{claim}[Gluing operation GE]
Given two disjoint sets $D_1, D_2 \subset A'_0$ of size exactly $\tfrac r2$, we find two disjoint sets $D, D' \subset A'_0 \setminus (D_1 \cup D_2)$ of size $\tfrac r2$ such that
\[G[D_1, D] \equiv K_{r/2, r/2}, \quad G[D, D'] \equiv K_{r/2, r/2}\quad \text{and} \quad G[D', D_2] \equiv K_{r/2, r/2} \, .\]
\end{claim}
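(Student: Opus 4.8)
The plan is to exploit that $A'_0$ is \emph{nearly complete}. By the preceding construction every vertex of $A$ has at least $(\tfrac12-4\alpha)n$ neighbours inside $A'_0$, and since $|A'_0|\le |A|\le(\tfrac12+\alpha)n$, each vertex of $A'_0$ is non-adjacent to at most $5\alpha n$ other vertices of $A'_0$. Because $\alpha\le\eps(1000r^2)^{-1}$ is tiny compared with $1/r$, the common neighbourhood inside $A'_0$ of any collection of $O(r)$ vertices still has size close to $|A'_0|\approx\tfrac n2$. The whole claim will therefore follow from a short greedy argument. First I would reduce to a single complete-bipartite-graph problem: put
\[
U_1=\Bigl(\bigcap_{v\in D_1}N(v)\cap A'_0\Bigr)\setminus(D_1\cup D_2),\qquad
U_2=\Bigl(\bigcap_{v\in D_2}N(v)\cap A'_0\Bigr)\setminus(D_1\cup D_2).
\]
Since $|D_1|=|D_2|=\tfrac r2$, each intersection discards at most $\tfrac r2\cdot 5\alpha n$ non-neighbours together with the $r$ vertices of $D_1\cup D_2$, so $|U_1|,|U_2|\ge(\tfrac12-4r\alpha)n$. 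By definition any $D\subseteq U_1$ is completely joined to $D_1$ and any $D'\subseteq U_2$ is completely joined to $D_2$, so it only remains to choose disjoint $D\subseteq U_1$ and $D'\subseteq U_2$, each of size $\tfrac r2$, with $G[D,D']\equiv K_{r/2,r/2}$.

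I would construct $D$ and $D'$ greedily. Pick any $\tfrac r2$ distinct vertices $D=\{d_1,\dots,d_{r/2}\}\subseteq U_1$, which is possible because $|U_1|\gg r$. Their common neighbourhood inside $U_2$ has size at least $|U_2|-\tfrac r2\cdot 5\alpha n\ge(\tfrac12-7r\alpha)n$, again far larger than $r$. Choosing $D'=\{d'_1,\dots,d'_{r/2}\}$ to be any $\tfrac r2$ vertices of this common neighbourhood that avoid the (at most $\tfrac r2$) vertices of $D$ gives $D\subseteq U_1$ and $D'\subseteq U_2$ disjoint, with every $d'_j$ adjacent to every $d_i$, hence $G[D,D']\equiv K_{r/2,r/2}$. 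Combined with $D\subseteq U_1$ and $D'\subseteq U_2$ this yields all three complete bipartite graphs $G[D_1,D]$, $G[D,D']$, $G[D',D_2]\equiv K_{r/2,r/2}$, and all four sets $D_1,D_2,D,D'$ are pairwise disjoint by construction, proving the claim.

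There is no genuine obstacle here beyond careful bookkeeping: the only point to verify is that after deleting the $O(r)$ non-neighbours of $D_1$, $D_2$, and $D$, together with the $O(r)$ explicitly forbidden vertices of $D_1\cup D_2\cup D$, the remaining candidate sets are still of linear size. This is immediate from the choice of $\alpha$, which forces every error term of the form $c\,r\alpha n$ to be negligible against $|A'_0|\approx\tfrac n2$; in particular each common neighbourhood that appears has size at least $(\tfrac12-7r\alpha)n\gg r$ for $n$ large. The identical argument, run inside the symmetric set on the $B$-side, supplies the corresponding gluing operation there.
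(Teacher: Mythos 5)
Your proof is correct and follows essentially the same route as the paper, which simply observes that the joint $A'_0$-neighbourhood of $D_1$ and $D_2$ has size at least $(\tfrac12-10r\alpha)n$ and lets the assertion follow; you have merely made the implicit greedy selection of $D$ and then $D'$ (inside the further common neighbourhood of $D$) explicit, with the correct bookkeeping of the $O(r\alpha n)$ losses.
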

\begin{claimproof}
As the joint $A'_0$ - neighborhood of $D_1$ and $D_2$ has size at least $(\tfrac 12 - 10r \alpha)n$, the assertion follows.
\end{claimproof}

Using GE, we glue the $\tfrac r2$ bridges in $A$ and $B$ respectively together using mutually disjoint vertex sets $D^A_1, \ldots,D^A_{r/2 - 1}$ and $D^B_1, \ldots,D^B_{r/2 - 1}$ and are left with path-like structures $P_A$ and $P_B$. After gluing the bridges together, we let $A' = A'_0 \setminus V(P_A), B' = B'_0 \setminus V(P_B)$. 

In a next step we need to absorb left-over vertices (\ref{stepd:absorb} of Section \ref{sec:o_extremal1}). To this end define two absorber-graphs for a vertex $u$: $\xi_r(u)$ and $\xi'_r(u)$ (see Figure \ref{fig:absorber_even}).
\begin{definition}
    Let $D \in \cbc{A', B'}$ and $u$ a vertex such that $\abs{ N(u) \cap D } \geq \tfrac n6$. Define $\xi_r(u)$ as follows.
    \begin{itemize}
        \item Select $D_1 = \cbc{d_1, d_2, \ldots, d_{r/2}}, D_2 = \cbc{d'_1, \ldots, d'_{r/2}} \subset N(u) \cap D$, hence $r$ pairwise disjoint vertices.
        \item Select $D' = \cbc{ u'_1, \ldots, u'_{r/2 - 1}} \subset N(D_1) \cap N(D_2) \cap D \setminus (D_1 \cup D_2 \cup \cbc{u}) $.
    \end{itemize}
    Define $\xi_r(u)$ as the graph containing $D_1, D_2, D'$ and $u$ as well as all the edges from $D_1$ to $D' \cup \cbc{u}$ and from $D_2$ to $D' \cup \cbc{u}$.
    Furthermore, define $\xi'_r(u)$ via
    \begin{itemize}
        \item Select $D_1 = \cbc{d_1, d_2, \ldots, d_{r/2}}, D_2 = \cbc{d'_1, \ldots, d'_{r/2}} \subset N(u) \cap D$, hence $r$ pairwise disjoint vertices.
        \item Select $D' = \cbc{ u'_1, \ldots, u'_{r/2 - 1}} \subset N(D_1) \cap N(D_2) \cap D \setminus (D_1 \cup D_2 \cup \cbc{u}) $.
        \item Select $\tfrac r2$ vertices $E_0 = \cbc{e_0, \ldots, e_{r/2-1}}$ and an additional disjoint vertex $e_{r/2}$ from $D \setminus \bc{ D_1 \cup D_2 \cup D' }$ such that $E_0 \cup \cbc{e_2} \subset N(d_2) \cap \ldots \cap N(d_{r/2}) \cap D$ and $G[E_0, \cbc{e_{r/2}}] \equiv K_{r/2, 1}$. 
    \end{itemize}
    Define $\xi'_r(u)$ as the graph containing $D_1, D_2, D'$ and $u$ as well as all the edges from $D_1$ to $D' \cup \cbc{u}$ and from $D_2$ to $D' \cup \cbc{u}$.
\end{definition}
Clearly, by the sizes of $A', B'$ and the minimum-degree condition, given at most $100 \alpha n$ pairwise different vertices, there is an absorber for each vertex which is disjoint from all other absorbers. Furthermore, by the minimum degree condition inside of $A'$ and $B'$, this family of absorbers exists.
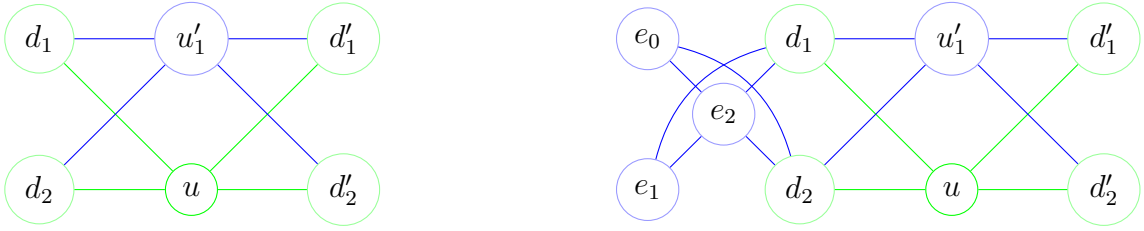
\begin{figure}
    \centering
    \begin{tikzpicture}

     \node[circle, draw=green!40] (d1) at (2, 1) {$d_1$};
     \node[circle, draw=green!40] (d2) at (2, -1) {$d_2$};

     \node[circle, draw=green!40] (dp1) at (6, 1) {$d'_{1}$};
     \node[circle, draw=green!40] (dp2) at (6, -1) {$d'_{2}$};
     
     \node[circle, draw=blue!40] (up1) at (4, 1) {$u'_{1}$};
     \node[circle, draw=green] (u) at (4, -1) {$u$};

     \path[-] (u) edge[draw=green] (d1);
     \path[-] (u) edge[draw=green] (d2);
     \path[-] (u) edge[draw=green] (dp1);
     \path[-] (u) edge[draw=green] (dp2);
     
     \path[-] (d1) edge[draw=blue] (up1);
     \path[-] (d2) edge[draw=blue] (up1);
     \path[-] (dp1) edge[draw=blue] (up1);
     \path[-] (dp2) edge[draw=blue] (up1);
     
     
     \node[circle, draw=blue!40] (e0) at (10, 1) {$e_0$};
     \node[circle, draw=blue!40] (e1) at (10, -1) {$e_1$};
     \node[circle, draw=blue!40] (e2) at (11, 0) {$e_2$};
     
     \node[circle, draw=green!40] (ld1) at (12, 1) {$d_1$};
     \node[circle, draw=green!40] (ld2) at (12, -1) {$d_2$};

     \node[circle, draw=green!40] (ldp1) at (16, 1) {$d'_{1}$};
     \node[circle, draw=green!40] (ldp2) at (16, -1) {$d'_{2}$};
     
     \node[circle, draw=blue!40] (lup1) at (14, 1) {$u'_{1}$};
     \node[circle, draw=green] (lu) at (14, -1) {$u$};

     \path[-] (lu) edge[draw=green] (ld1);
     \path[-] (lu) edge[draw=green] (ld2);
     \path[-] (lu) edge[draw=green] (ldp1);
     \path[-] (lu) edge[draw=green] (ldp2);
     
     \path[-] (ld1) edge[draw=blue] (lup1);
     \path[-] (ld2) edge[draw=blue] (lup1);
     \path[-] (ldp1) edge[draw=blue] (lup1);
     \path[-] (ldp2) edge[draw=blue] (lup1);
     
     \path[-] (ld1) edge[draw=blue] (e2);
     \path[-] (ld2) edge[draw=blue] (e2);
     \path[-] (e1) edge[draw=blue] (e2);
     \path[-] (e0) edge[draw=blue] (e2);
     
     \path[-] (e0) edge[draw=blue, bend left=30] (ld2);
     \path[-] (e1) edge[draw=blue, bend left=30] (ld1);
    
    \end{tikzpicture}
    
    \caption{Absorbers $\xi_4(u)$ (left) and $\xi'_4(u)$ (right) with $r=4$, where $u$ is the green vertex, the green vertices are inside the $A'$-neighborhood of $u$ and the blue vertices are vertices chosen from $A'$.}
    \label{fig:absorber_even}
\end{figure}

We are now in position to absorb the exceptional set $C$ (of course, without the bridging vertices on $P_A$ and $P_B$. For all up to $6 \alpha n$ vertices in $C$ create a disjoint absorber $\xi_r(u)$ as above. Chose as $D$ either $A'$ (if a vertex has $\tfrac n5$ neighbors in $A$), or $B'$ otherwise. Next, using the gluing operation GE up to $6 \alpha n$ times, glue the absorbers inside of $A'$ and $B'$ together, always using only vertices that did not get used in a previous gluing step or are part of the absorbers. As one only requires at most $24 r \alpha n$ vertices during this procedure, it is clearly possible by above discussion.

Finally, use GE again to glue the series of absorbers to $P_A$ and $P_B$ respectively (which is clearly possible, as this is only one operation on each set).

After the repetitive gluing, we are left with sets $A''$ and $B''$ (hence, $A'$ without the glued structures $P'_A$) and the path-like subgraph $P'_A$ of size at most $25r \alpha n$, hence $\abs{A''} \geq (\tfrac 12 - 30 r \alpha)n$. Furthermore, each vertex in $A''$ is connected to at least $(\tfrac 12 - 40 r \alpha)n$ vertices in $A''$. Clearly, the same holds for $B''$. 

Before closing the path in both sets (hence, creating a cycle which contains all vertices that are not part of $P'_A$ or $P'_B$), which is a standard application of the blow-up lemma, we need to make sure that certain divisibility conditions hold. 
As we wish to close the cycle by appending blocks of two layers of size $r$ (thus, $K_{{r/2}, {r/2}}$), we require that $\abs{A''} \equiv \abs{B''} \equiv 0 \pmod r$. If this is the case, set $A''' = A''$ and proceed. Otherwise, if there is $0 < i < r$ such that $\abs{A''} \equiv i \pmod r$, select $i$ vertices $a_1, \ldots,a_{i} \in A''$ and absorb them using disjoint instances $\xi'_r(a_1), \ldots, \xi'_r(a_{i})$ with $D = A''$. Clearly, as this requires only finitely many vertices, such a disjoint family exists. Further, because $a_j \in A''$, each absorber consumes $2r + 1$ vertices of $A''$, hence afterwards, the divisibility condition holds. Now, glue the absorbers sequentially to $P'_A$ using GE and sets $G_1, \ldots,G_{i-1}$. As each gluing operation consumes $r$ vertices, the divisibility does not change hence we are left with a set $A''' = A'' \setminus \bc{ V(\xi'_r(a_1)) \cup \ldots \cup V(\xi'_r(a_{i})) \cup G_1 \cup \ldots \cup G_{i-1} }$.

Now it is easy to check that $(A''',B''')$ is $(\eps,\tfrac 12)$-super-regular and by Lemma~\ref{lem:blowup} we find an $\tfrac r2-$blowup of the path on all remaining vertices of $A'''$ and $B'''$ (\ref{stepd:cover} of Section \ref{sec:o_extremal1}). 
Moreover, the end-tuples of the path-like structure constructed before have at least $\tfrac 12 |A'''|$ and $\tfrac 12 |B'''|$ common neighbours in $A'''$ and $B'''$ respectively.
Therefore, we may choose the start- and end-tuples of this path-blow-up to connect to these end-tuples.

We are left to argue that the constructed subgraph is $r$-connected and $r$-regular.
\begin{claim}
The constructed subgraph is $r$-connected and $r$-regular.
\end{claim}
\begin{claimproof}
While $r$-regularity follows obviously, the $r$-connected part needs a short argument. Upon removal of up to $r-1$ bridge-vertices, the parts do not fall apart. Furthermore, removing up to $r-1$ vertices in the $\tfrac r2$-blow-up of the path part of the subgraph does not disconnect the structure. Finally, the absorbing structure $\xi_r$ itself is isomorphic to an $\tfrac r2$-blowup of the path on three vertices. Moreover, disconnecting the graph by removing up to $r-1$ vertices in $\xi_r'$ is not possible.
\end{claimproof}

\subsection{The odd case}

Assume that $r$ is odd.
The argument in the odd case is a bit more delicate as in the even case. Indeed, while in the process above all divisibility conditions could be easily established, in the odd case, we might end with two almost cliques of odd size. If there is a set $C$, we can easily absorb those vertices in a way that after absorbing both parts of the graph contain an even number of vertices - which we require to embed a regular graph. If on the other hand there is no such set $C$, we need to be much more careful. We will tackle this problem by having two different types of bridges between $A$ and $B$, one consuming an even number of vertices of each set, one consuming an odd number - thus, depending on the size of $C$ and the parity of $A$ and $B$, we need to use two different constructions. The two types of bridges are visualised in Figure~\ref{fig:bridge_graph_h_odd} for the special case $ r = 5$.

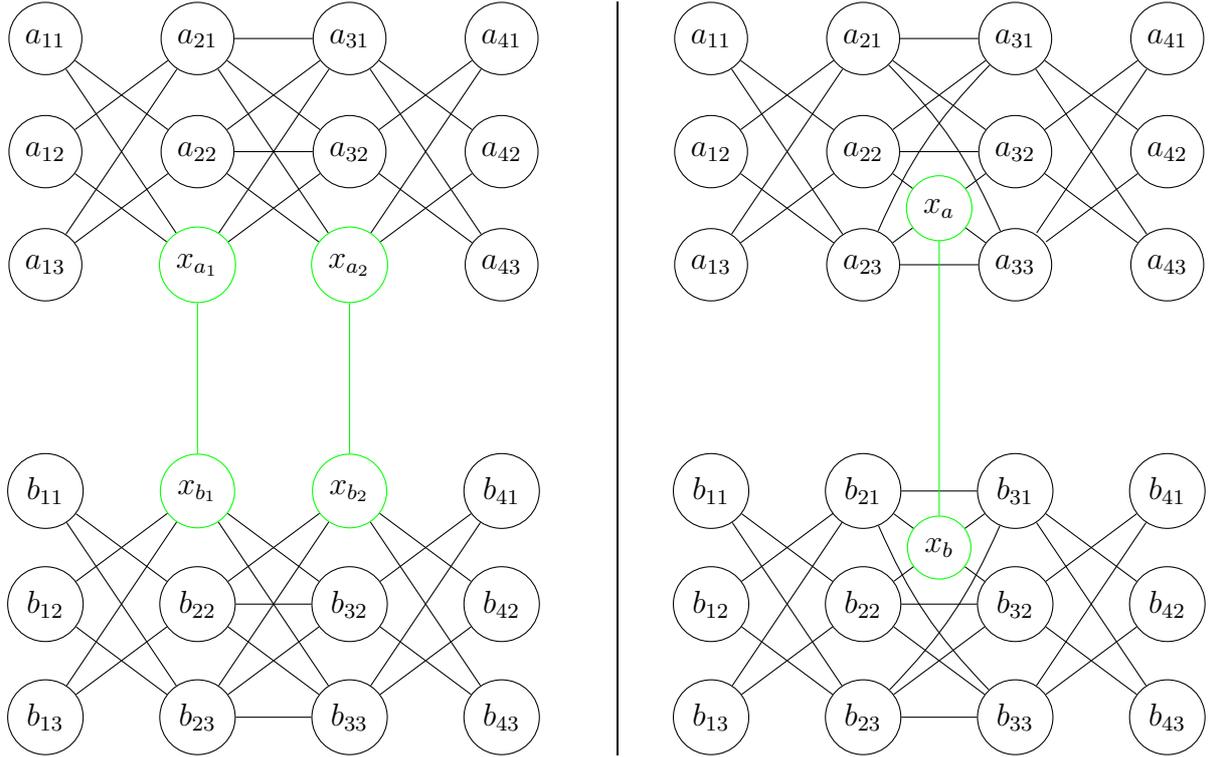
\begin{figure}
    \centering
    \begin{minipage}{0.45\textwidth}
    \begin{tikzpicture}
        \node[circle, draw=black] (a_11) at (0, 4.5) {$a_{11}$};
        \node[circle, draw=black] (a_12) at (0, 3) {$a_{12}$};
        \node[circle, draw=black] (a_13) at (0, 1.5) {$a_{13}$};
        \node[circle, draw=black] (b_11) at (0, -1.5) {$b_{11}$};
        \node[circle, draw=black] (b_12) at (0, -3) {$b_{12}$};
        \node[circle, draw=black] (b_13) at (0, -4.5) {$b_{13}$};
        
        \node[circle, draw=black] (a_21) at (2, 4.5) {$a_{21}$};
        \node[circle, draw=black] (a_22) at (2, 3) {$a_{22}$};
        \node[circle, draw=green] (xa1) at (2, 1.5) {$x_{a_1}$};
        \node[circle, draw=green] (xb1) at (2, -1.5) {$x_{b_1}$};
        \node[circle, draw=black] (b_22) at (2, -3) {$b_{22}$};
        \node[circle, draw=black] (b_23) at (2, -4.5) {$b_{23}$};
        
        \node[circle, draw=black] (a_31) at (4, 4.5) {$a_{31}$};
        \node[circle, draw=black] (a_32) at (4, 3) {$a_{32}$};
        \node[circle, draw=green] (xa2) at (4, 1.5) {$x_{a_2}$};
        \node[circle, draw=green] (xb2) at (4, -1.5) {$x_{b_2}$};
        \node[circle, draw=black] (b_32) at (4, -3) {$b_{32}$};
        \node[circle, draw=black] (b_33) at (4, -4.5) {$b_{33}$};
        
        \node[circle, draw=black] (a_41) at (6, 4.5) {$a_{41}$};
        \node[circle, draw=black] (a_42) at (6, 3) {$a_{42}$};
        \node[circle, draw=black] (a_43) at (6, 1.5) {$a_{43}$};
        \node[circle, draw=black] (b_41) at (6, -1.5) {$b_{41}$};
        \node[circle, draw=black] (b_42) at (6, -3) {$b_{42}$};
        \node[circle, draw=black] (b_43) at (6, -4.5) {$b_{43}$};

        \path[-] (a_11) edge[draw=black] (a_22);
        \path[-] (a_11) edge[draw=black] (xa1);
        \path[-] (a_12) edge[draw=black] (xa1);
        \path[-] (a_12) edge[draw=black] (a_21);
        \path[-] (a_13) edge[draw=black] (a_21);
        \path[-] (a_13) edge[draw=black] (a_22);

        \path[-] (xa1) edge[draw=black] (a_31);
        \path[-] (xa1) edge[draw=black] (a_32);
        \path[-] (a_21) edge[draw=black] (xa2);
        \path[-] (a_21) edge[draw=black] (a_31);
        \path[-] (a_21) edge[draw=black] (a_32);
        \path[-] (a_22) edge[draw=black] (xa2);
        \path[-] (a_22) edge[draw=black] (a_31);
        \path[-] (a_22) edge[draw=black] (a_32);
        
        \path[-] (a_31) edge[draw=black] (a_42);
        \path[-] (a_31) edge[draw=black] (a_43);
        \path[-] (a_32) edge[draw=black] (a_41);
        \path[-] (a_32) edge[draw=black] (a_43);
        \path[-] (xa2) edge[draw=black] (a_41);
        \path[-] (xa2) edge[draw=black] (a_42);

        \path[-] (b_12) edge[draw=black] (b_23);
        \path[-] (b_11) edge[draw=black] (b_22);
        \path[-] (b_11) edge[draw=black] (b_23);
        \path[-] (b_12) edge[draw=black] (xb1);
        \path[-] (b_13) edge[draw=black] (b_22);
        \path[-] (b_13) edge[draw=black] (xb1);

        \path[-] (b_22) edge[draw=black] (xb2);
        \path[-] (xb1) edge[draw=black] (b_32);
        \path[-] (xb1) edge[draw=black] (b_33);
        \path[-] (b_22) edge[draw=black] (b_32);
        \path[-] (b_22) edge[draw=black] (b_33);
        \path[-] (b_23) edge[draw=black] (b_32);
        \path[-] (b_23) edge[draw=black] (xb2);
        \path[-] (b_23) edge[draw=black] (b_33);
        
        \path[-] (xb2) edge[draw=black] (b_43);
        \path[-] (b_32) edge[draw=black] (b_43);
        \path[-] (xb2) edge[draw=black] (b_42);
        \path[-] (b_32) edge[draw=black] (b_41);
        \path[-] (b_33) edge[draw=black] (b_41);
        \path[-] (b_33) edge[draw=black] (b_42);
        
        \path[-] (xb2) edge[draw=green] (xa2);
        \path[-] (xb1) edge[draw=green] (xa1);

    \end{tikzpicture}
    \end{minipage} \hfill\vline\hfill
    \begin{minipage}{0.45\textwidth}
        \begin{tikzpicture}
        \node[circle, draw=black] (a_11) at (0, 4.5) {$a_{11}$};
        \node[circle, draw=black] (a_12) at (0, 3) {$a_{12}$};
        \node[circle, draw=black] (a_13) at (0, 1.5) {$a_{13}$};
        \node[circle, draw=black] (b_11) at (0, -1.5) {$b_{11}$};
        \node[circle, draw=black] (b_12) at (0, -3) {$b_{12}$};
        \node[circle, draw=black] (b_13) at (0, -4.5) {$b_{13}$};
        
        \node[circle, draw=black] (a_21) at (2, 4.5) {$a_{21}$};
        \node[circle, draw=black] (a_22) at (2, 3) {$a_{22}$};
        \node[circle, draw=black] (a_23) at (2, 1.5) {$a_{23}$};
        \node[circle, draw=black] (b_21) at (2, -1.5) {$b_{21}$};
        \node[circle, draw=black] (b_22) at (2, -3) {$b_{22}$};
        \node[circle, draw=black] (b_23) at (2, -4.5) {$b_{23}$};
        
        \node[circle, draw=black] (a_31) at (4, 4.5) {$a_{31}$};
        \node[circle, draw=black] (a_32) at (4, 3) {$a_{32}$};
        \node[circle, draw=black] (a_33) at (4, 1.5) {$a_{33}$};
        \node[circle, draw=black] (b_31) at (4, -1.5) {$b_{31}$};
        \node[circle, draw=black] (b_32) at (4, -3) {$b_{32}$};
        \node[circle, draw=black] (b_33) at (4, -4.5) {$b_{33}$};
        
        \node[circle, draw=black] (a_41) at (6, 4.5) {$a_{41}$};
        \node[circle, draw=black] (a_42) at (6, 3) {$a_{42}$};
        \node[circle, draw=black] (a_43) at (6, 1.5) {$a_{43}$};
        \node[circle, draw=black] (b_41) at (6, -1.5) {$b_{41}$};
        \node[circle, draw=black] (b_42) at (6, -3) {$b_{42}$};
        \node[circle, draw=black] (b_43) at (6, -4.5) {$b_{43}$};
        
        \node[circle, draw=green] (xa) at (3, 2.25) {$x_{a}$};
        \node[circle, draw=green] (xb) at (3, -2.25) {$x_{b}$};

        \path[-] (a_12) edge[draw=black] (a_23);
        \path[-] (a_11) edge[draw=black] (a_23);
        \path[-] (a_11) edge[draw=black] (a_22);
        \path[-] (a_12) edge[draw=black] (a_21);
        \path[-] (a_13) edge[draw=black] (a_21);
        \path[-] (a_13) edge[draw=black] (a_22);

        \path[-] (a_21) edge[draw=black] (a_31);
        \path[-] (a_21) edge[draw=black] (a_32);
        \path[-] (a_22) edge[draw=black] (a_31);
        \path[-] (a_22) edge[draw=black] (a_32);
        
        \path[-] (a_31) edge[draw=black] (a_42);
        \path[-] (a_31) edge[draw=black] (a_43);
        \path[-] (a_32) edge[draw=black] (a_41);
        \path[-] (a_32) edge[draw=black] (a_43);
        \path[-] (xa2) edge[draw=black] (a_41);
        \path[-] (xa2) edge[draw=black] (a_42);

        \path[-] (b_12) edge[draw=black] (b_23);
        \path[-] (b_11) edge[draw=black] (b_22);
        \path[-] (b_11) edge[draw=black] (b_23);
        \path[-] (b_12) edge[draw=black] (xb1);
        \path[-] (b_13) edge[draw=black] (b_22);
        \path[-] (b_13) edge[draw=black] (xb1);

        \path[-] (b_22) edge[draw=black] (b_32);
        \path[-] (b_22) edge[draw=black] (b_33);
        \path[-] (b_23) edge[draw=black] (b_32);
        \path[-] (b_23) edge[draw=black] (b_33);
        
        \path[-] (b_32) edge[draw=black] (b_43);
        \path[-] (b_32) edge[draw=black] (b_41);
        \path[-] (b_33) edge[draw=black] (b_41);
        \path[-] (b_33) edge[draw=black] (b_42);
        
        \path[-] (b_31) edge[draw=black] (b_42);
        \path[-] (b_31) edge[draw=black] (b_43);
        
        \path[-] (b_21) edge[draw=black, bend right=10] (b_33);
        \path[-] (b_31) edge[draw=black, bend left=10] (b_23);
        \path[-] (b_31) edge[draw=black] (b_21);
        
        \path[-] (xb) edge[draw=black] (b_21);
        \path[-] (xb) edge[draw=black] (b_22);
        \path[-] (xb) edge[draw=black] (b_31);
        \path[-] (xb) edge[draw=black] (b_32);
        
        \path[-] (a_23) edge[draw=black, bend left=10] (a_31);
        \path[-] (a_33) edge[draw=black, bend right=10] (a_21);
        \path[-] (a_23) edge[draw=black] (a_33);
        
        \path[-] (xa) edge[draw=black] (a_22);
        \path[-] (xa) edge[draw=black] (a_32);
        \path[-] (xa) edge[draw=black] (a_23);
        \path[-] (xa) edge[draw=black] (a_33);

        \path[-] (xa) edge[draw=green] (xb);

    \end{tikzpicture}
    \end{minipage}
    \caption{The two types of connections between the sets $A$ and $B$ in the special case $r = 5$.}
    \label{fig:bridge_graph_h_odd}
\end{figure}

We begin by showing that we find three pairs of bridges of the first type, using an even number of vertices of both classes (\ref{stepd:bridge} of Section \ref{sec:o_extremal1}).

\begin{claim}\label{lemma_matching_edges_odd}
Suppose $\delta(G) \geq \tfrac{n+r-2}{2}$ and $\abs{A} \leq \abs{B}$. Furthermore, let $n$ be large enough. There is a matching $(x_{a_1}x_{b_1}, \ldots, x_{a_{r+1}}x_{b_{r+1}})$  such that $\abs{N(x_{a_i}) \cap A} \geq \tfrac n5$ and $\abs{N(x_{b_j}) \cap B} \geq \tfrac n5$ for all $i, j \leq r+1$.
\end{claim}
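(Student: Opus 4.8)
The plan is to follow exactly the scheme of the proof of Claim~\ref{lemma_matching_edges}, with $r$ replaced by $r+1$ throughout. As there, the first move is to reduce the problem to producing $r+1$ vertex-disjoint edges from $A$ to $V\setminus A$, i.e.\ a matching of size $r+1$ in the bipartite graph $H$ on parts $A$ and $V\setminus A$ consisting of all edges of $G$ between the two parts. Once such edges $a_1c_1,\dots,a_{r+1}c_{r+1}$ are found, I would redirect them into $A$–$B$ edges precisely as before: whenever $c_i \in B$, or $c_i \in C = V \setminus (A\cup B)$ with $\abs{N(c_i)\cap B}\ge \tfrac n5$, keep the edge and set $x_{a_i}=a_i$, $x_{b_i}=c_i$; whenever $c_i \in C$ has fewer than $\tfrac n5$ neighbours in $B$, the minimum degree together with $\abs{C}\le 2\alpha n$ forces $\abs{N(c_i)\cap A}\ge \tfrac n5$, so I discard $a_i$, pick a fresh vertex $b_i \in N(c_i)\cap B$ (possible since every vertex of $C$ has at least $\alpha n$ neighbours in $B$ and only finitely many $B$-vertices are already used), and set $x_{a_i}=c_i$, $x_{b_i}=b_i$. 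Distinctness of all chosen vertices keeps this a matching, and $G[A]$, $G[B]$ having minimum degree $\bc{\tfrac12-3\alpha}n\ge\tfrac n5$ guarantees the required $\tfrac n5$-degree bounds on all $x_{a_i}$ and $x_{b_j}$.

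The heart of the matter is therefore to guarantee a matching of size $r+1$ in $H$, and here I would use K\"onig's theorem rather than the ad hoc case distinction of Claim~\ref{lemma_matching_edges}, since it isolates cleanly where the extra edge comes from. Suppose for contradiction that $H$ has no matching of size $r+1$; then it has a vertex cover $T=T_A\dcup T_C$ with $T_A\subseteq A$, $T_C\subseteq V\setminus A$ and $\abs{T}\le r$. Because $\abs A,\abs{V\setminus A}\ge \bc{\tfrac12-\alpha}n \gg r$ for $n$ large, I can pick $a\in A\setminus T_A$ and $w\in (V\setminus A)\setminus T_C$. Every neighbour of $a$ outside $A$ must lie in $T_C$ and every neighbour of $w$ inside $A$ must lie in $T_A$, so
\begin{align*}
\abs{T_C} &\ge \delta(G)-\bc{\abs A-1}, &
\abs{T_A} &\ge \delta(G)-\bc{n-\abs A-1}.
\end{align*}
Adding these gives $\abs T=\abs{T_A}+\abs{T_C}\ge 2\delta(G)-n+2$.

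The final step, and the only genuinely new point compared to the even case, is to observe that this lower bound exceeds $r$. Since $r$ is odd and (by the standing hypothesis $nr\equiv 0 \pmod 2$) $n$ is even, the number $\tfrac{n+r-2}{2}$ is a half-integer, so the integrality of $\delta(G)$ upgrades the hypothesis to $\delta(G)\ge \tfrac{n+r-1}{2}$, whence $2\delta(G)-n+2\ge r+1$. This contradicts $\abs T\le r$ and produces the desired matching of size $r+1$. I expect this parity/integrality bookkeeping to be the main obstacle: it is exactly the gain of one in the minimum degree (available only because $r$ odd forces $n$ even) that turns the $r$ edges of Claim~\ref{lemma_matching_edges} into the $r+1$ edges needed here, and for the same reason the hypothesis ``$n$ large enough'' is used only to ensure $A\setminus T_A$ and $(V\setminus A)\setminus T_C$ are non-empty. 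The same computation in the even case yields $2\delta(G)-n+2\ge r$, recovering Claim~\ref{lemma_matching_edges}.
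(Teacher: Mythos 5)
Your proposal is correct and follows the same outer shell as the paper's proof---reduce to finding a matching of size $r+1$ between $A$ and $V\setminus A$, redirect the edges landing in $C=V\setminus(A\cup B)$ exactly as in Claim~\ref{lemma_matching_edges}, and exploit the integrality of $\delta(G)$ to upgrade the hypothesis to $\delta(G)\ge\tfrac{n+r-1}{2}$---but the step that actually produces the matching is genuinely different. The paper argues by cases on $\abs{A}$ relative to $\tfrac{n-r}{2}$, takes $n$ odd as the base case and reduces $n$ even to it by deleting a vertex, and in the boundary case $\abs{A}=\tfrac{n-r}{2}$ has to resort to moving the at most $r$ common neighbours into $A$ and restarting. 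Your K\"onig argument replaces all of this with one computation: an uncovered vertex on each side of a cover $T=T_A\dcup T_C$ forces $\abs{T}\ge 2\delta(G)-n+2\ge r+1$, contradicting $\abs{T}\le r$. This is cleaner, isolates exactly where the extra edge beyond Claim~\ref{lemma_matching_edges} comes from, and uniformly recovers the even case. The one caveat is that your argument needs $n$ even (for $n$ odd and $r$ odd the bound only gives $\abs{T}\ge r$); you correctly import this from the standing hypothesis $nr\equiv 0\pmod 2$ of Theorem~\ref{thm:main}, and since the claim is only ever applied to the original graph in the odd-$r$ case, that is the only case that matters---indeed, for $n$ odd the example of two cliques sharing $r$ vertices shows the statement is delicate for a fixed $A$, which is precisely why the paper's own proof must redefine $A$ there. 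In short: your route is more robust where it applies; the paper's nominally covers both parities at the cost of a more ad hoc case analysis.
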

\begin{claimproof}
As in the proof of Claim~\ref{lemma_matching_edges}, it suffices to find $r+1$ edges from $A$ to $V \setminus A$. 

First, suppose that $n$ is odd. If $\abs{A} \leq \tfrac{n-r-2}{2}$, the minimum degree of $ \tfrac{n+r-2}{2}$ guarantees that each vertex of $A$ needs to find at least $\tfrac{n+r-2}{2} - (\tfrac{n-r-2}{2} - 1) = r+1$ neighbors outside of $A$, hence the assertion follows.
Suppose $\abs{A} = \tfrac{n-r}{2} + i$ with $i = 1, \ldots,\floor{\tfrac r2}$. In this case, $\abs{V \setminus A} = \tfrac{n+r}{2} - i$. Each vertex of $A$ finds at least $r-i$ neighbors outside of $A$. Suppose that $N(A) \setminus A$ has size at most $r$ (thus, all edges from $A$ into the rest of the graph belong to at most $r$ vertices). Now pick a different vertex in the complement (which exists, as $\tfrac{n+r}{2} - i \gg r$). This vertex requires $\tfrac{n+r-2}{2} - (\tfrac{n+r}{2} - i - 1) = i$ neighbors in $A$, which is a contradiction.
If finally $\abs{A} = \tfrac{n-r}{2}$, each vertex of $A$ has at least $r$ neighbors in the complement of $A$. If all vertices of $A$ share those $r$ vertices (hence, we only find $r$ matching edges), those vertices are connected to all vertices in $A$, hence can be moved to $A$ by only increasing $\alpha$-extremity, thus the assertion follows from the previous case.

Now, if $n$ is even, because the minimum degree needs to be an integer, it is at least $\tfrac{n+1+r-2}{2}$, hence upon removal of one vertex, we are left with a graph on $n' = n-1$ vertices and minimum degree at least $\tfrac{n+1+r-2}{2} -1= \tfrac{n'+r-2}{2}$ (and $n'$ being odd). Hence the assertion follows from the previous discussion.
\end{claimproof}

Similarly as in the even case, Claim~\ref{lemma_matching_edges_odd} gives us $\tfrac{r+1}{2}$ pairs of bridge-edges as in Figure \ref{fig:bridge_graph_h_odd} (the green part). Clearly, the rest of the bridge graph can be created completely analogously to Claim~\ref{bridge_graph_existence}.

By the same token we get the following claim immediately.
\begin{claim}\label{cor_matching_edges_odd}
Suppose $\delta(G) \geq \tfrac{n+r-2}{2}$ and $\abs{A} \leq \abs{B}$. Furthermore, let $n$ be large enough. There is a matching $(x_{a_1}x_{b_1}, \ldots, x_{a_{r-1}}x_{b_{r-1}})$  such that $\abs{N(x_{a_i}) \cap A} \geq  \tfrac n5$ and $\abs{N(x_{b_j}) \cap B} \geq \tfrac n5$ for all $i, j \leq r-1$. Furthermore, there are different vertices $x_a \in A, x_b \in B$ such that $x_ax_b \in E(G)$, $\abs{N(x_a) \cap A} \geq r-1$ and $\abs{ N(x_b) \cap B} \geq r-1$.
\end{claim}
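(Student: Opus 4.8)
The first assertion is immediate from what precedes: Claim~\ref{lemma_matching_edges_odd} already produces a matching of size $r+1$ whose endpoints satisfy $\abs{N(x_{a_i})\cap A}\ge\tfrac n5$ and $\abs{N(x_{b_j})\cap B}\ge\tfrac n5$, and discarding two of its edges leaves the matching of size $r-1$ that we want. (Equivalently one reruns the case analysis of that proof with $r-1$ in place of $r+1$, which only makes each case easier; this is the sense of ``by the same token''.) All the remaining work lies in the extra edge $x_ax_b$ with both endpoints genuinely inside $A$ and $B$ and disjoint from the matching, so I would in fact obtain both assertions together, more cleanly, by producing $r$ vertex-disjoint $A$--$B$ edges at once.

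First note that the two degree requirements on $x_ax_b$ are automatic. Since we are in Extremal Case~I, $G[A]$ and $G[B]$ have minimum degree $(\tfrac12-3\alpha)n$, so for large $n$ every $x_a\in A$ has $\abs{N(x_a)\cap A}\ge(\tfrac12-3\alpha)n\ge r-1$ (and this is also $\ge\tfrac n5$ since $\alpha$ is tiny), and likewise for every $x_b\in B$. Hence it suffices to exhibit $r$ vertex-disjoint edges of $G$, each with one endpoint in $A$ and the other in $B$; one then designates $r-1$ of them as $(x_{a_i}x_{b_i})$ and the last one as $x_ax_b$, and all endpoints meet both the $\tfrac n5$ and the $r-1$ bounds simultaneously, with disjointness built in.

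For the existence of these $r$ disjoint $A$--$B$ edges I would lean on $r$-connectivity. Because $\delta(G)\ge\tfrac{n+r-2}{2}>\tfrac n2$, the graph $G$ is $r$-connected. Let $H$ be the bipartite graph formed by the edges of $G$ between $A$ and $B$. If $H$ admitted a vertex cover $S$ with $\abs S\le r-1$, then $G-S$ would contain no edge between $A\setminus S$ and $B\setminus S$, both of which are nonempty as $\abs A,\abs B\gg r$; in the regime where this bridge is needed one has $A\cup B=V(G)$, so $G-S$ would be disconnected, contradicting $r$-connectivity. By König's theorem $H$ therefore has a matching of size at least $r$, which is exactly what the previous paragraph needs.

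The only genuinely new point, and the one I expect to be the crux, is the existence of this size-$r$ matching between the two near-cliques, equivalently the absence of an $(A,B)$-separator of size $<r$; this is precisely what $r$-connectivity supplies, and it is the reason the weak degree bound $r-1$ (rather than $\tfrac n5$) is what one asks of the honest $A$--$B$ edge. Should an exceptional set $C=V(G)\setminus(A\cup B)$ be present one would, as in Claim~\ref{lemma_matching_edges_odd}, route part of the matching through $C$ and still peel off one honest $A$--$B$ edge, but the connectivity bound above is what guarantees such an edge is always available.
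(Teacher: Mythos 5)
Your argument is correct in the only regime where this claim is actually invoked, but it reaches the extra edge $x_ax_b$ by a genuinely different mechanism than the paper. The paper's proof is a one-liner: it reuses the counting argument of Claim~\ref{lemma_matching_edges_odd}, which produces $r+1$ pairwise disjoint edges from $A$ to $V\setminus A$; since the claim is only applied when $C=V(G)\setminus(A\cup B)=\emptyset$, all of these are honest $A$--$B$ edges, so one keeps $r-1$ of them as the matching and one more as $x_ax_b$, the degree bounds being automatic for vertices of $A$ and $B$ exactly as you observe. You instead obtain the $r$ disjoint $A$--$B$ edges from K\"onig's theorem combined with the $r$-connectivity of $G$ (which the minimum degree condition supplies, as noted in the introduction): a vertex cover of size at most $r-1$ of the bipartite graph of $A$--$B$ edges would be a cutset separating $A\setminus S$ from $B\setminus S$ once $A\cup B=V(G)$. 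This is a clean and valid alternative; it is shorter and more conceptual, at the price of yielding only $r$ edges rather than $r+1$, which is all that is needed here.

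One caveat: your closing remark that, when $C\neq\emptyset$, connectivity still guarantees an honest $A$--$B$ edge is not correct. Take $A$ and $B$ to be disjoint cliques of size $\tfrac{n-r}{2}$ and let $C$ consist of $r$ vertices joined to everything, with no edges between $A$ and $B$. Then $\delta(G)=\tfrac{n+r-2}{2}$, all hypotheses of Extremal Case~I hold, and $G$ is $r$-connected through $C$, yet no $A$--$B$ edge exists, so the ``furthermore'' part of the claim fails. Hence that part genuinely requires $C=\emptyset$; the paper states the claim without this hypothesis and its one-line proof carries the same implicit restriction, so this is a shared imprecision rather than a defect specific to your argument, but the hand-waving in your last paragraph cannot be repaired and should simply be replaced by the observation that the claim is only used when $A\cup B=V(G)$.
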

\begin{claimproof}
This follows directly from the proof of Claim~\ref{lemma_matching_edges_odd}.
\end{claimproof}

Next, we re-define the gluing operation GE to GO as follows.
\begin{claim}[Gluing operation GO]
Given two disjoint sets $D_1, D_2 \subset A'_0$ of sizes exactly $\tfrac{r+1}{2}$, we find two disjoint sets $D'_1, D'_2 \subset A'_0 \setminus (D_1 \cup D_2)$ of size $\tfrac{r+1}{2}$ such that
\[G[D_1, D'_1] \equiv G[D'_1, D'_2] \equiv G[D'_2, D_2] \equiv K_{(r+1)/2, (r+1)/2} \, .\]
\end{claim}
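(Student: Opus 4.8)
The plan is to prove GO by following the proof of the gluing operation GE essentially verbatim, since GO is nothing more than its odd analogue: the only difference is that each of the four sets involved now has size $\tfrac{r+1}{2}$ rather than $\tfrac r2$. Throughout I would use the degree fact established during the bridge construction: after deleting the $O(r)$ bridge vertices, every vertex of $A'_0$ retains at least $(\tfrac12-4\alpha)n$ neighbours inside $A'_0$, so (since $\abs{A'_0}\le\abs{A}\le(\tfrac12+\alpha)n$) every vertex of $A'_0$ has at most $5\alpha n$ non-neighbours in $A'_0$.

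First I would bound the common $A'_0$-neighbourhood of $D_1$. Since $\abs{D_1}=\tfrac{r+1}{2}$ and each of its vertices has at most $5\alpha n$ non-neighbours in $A'_0$, the set $\bc{\bigcap_{v\in D_1}N(v)}\cap A'_0$ has size at least $\abs{A'_0}-\tfrac{r+1}{2}\cdot 5\alpha n\ge(\tfrac12-10r\alpha)n$. As this greatly exceeds $\tfrac{r+1}{2}$, I can pick $D'_1\subset A'_0\setminus(D_1\cup D_2)$ of size $\tfrac{r+1}{2}$ entirely within this common neighbourhood, which makes $G[D_1,D'_1]$ a complete bipartite graph $K_{(r+1)/2,(r+1)/2}$. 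Next I would repeat the argument for the $r+1$ vertices of $D'_1\cup D_2$: their common $A'_0$-neighbourhood again has size at least $(\tfrac12-10r\alpha)n$, so I can choose $D'_2\subset A'_0\setminus(D_1\cup D_2\cup D'_1)$ of size $\tfrac{r+1}{2}$ inside it. By construction $G[D'_1,D'_2]$ and $G[D'_2,D_2]$ are then both $K_{(r+1)/2,(r+1)/2}$, giving all three required complete bipartite graphs.

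The only point to check is that each greedy selection is feasible, i.e.\ that the relevant common neighbourhood stays large enough to avoid the finitely many ($O(r)$) already-chosen vertices; this holds because $r$ is fixed and $\alpha$ is chosen so small that $r\alpha\ll 1$, whence every common neighbourhood has size $\Omega(n)$. Consequently there is no genuine obstacle here: exactly as for GE, the assertion is immediate once one observes that the joint $A'_0$-neighbourhood of any constant-size subset of $A'_0$ has size $(\tfrac12-O(r\alpha))n$.
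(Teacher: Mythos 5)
Your proposal is correct and matches the paper's argument: the paper's own proof is a one-line appeal to the fact that each vertex of $A'_0$ keeps at least $(\tfrac12-4\alpha)n$ neighbours in $A'_0$, so the joint neighbourhood of any constant-size set is of size $\Omega(n)$ and the sets $D'_1$, $D'_2$ can be chosen greedily. You simply spell out the same common-neighbourhood computation in more detail.
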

\begin{claimproof}
This follows directly from the fact that each vertex in $A'_0$ is connected to at least $(\tfrac 12 - 4 \alpha)n$ vertices in $A'_0$ and GO uses only finitely many vertices of the neighborhoods.
\end{claimproof}
We stress at this point that GO can be applied to blocks whose end-vertices $D_1$ have currently degree $\tfrac{r-1}{2}$ (then we chose all edges from the connecting graphs $K_{(r+1)/2, (r+1)/2}$) or degree $\tfrac{r+1}{2}$ (then we chose the complete bipartite graph between $D'_1$ and $D'_2$ and at the other connections, we remove one matching of size $\tfrac{r+1}{2}$.
Furthermore observe, that gluing consumes $r+1$ vertices from the underlying set.

We proceed as follows. If $\abs{C} > 0$ or $\abs{C} = 0$ and $\abs{A'_0}, \abs{B'_0} \equiv 0 \pmod 2$, we create $\tfrac{r+1}{2}$ pairs of bridge vertices by Claim~\ref{lemma_matching_edges_odd}. Otherwise, we create $\tfrac{r-1}{2}$ pairs of bridge vertices and one additional bridge by Claim~\ref{cor_matching_edges_odd}. In both cases, we glue the bridges in $A$ and $B$ respectively together using mutually disjoint vertex sets $D^A_1, \ldots,D^A_{(r-1)/2}$ and $D^B_1, \ldots,D^B_{(r-1)/2}$ constructing $P_A, P_B$ and, similarly as in the even case, we set $A' = A'_0 \setminus V(P_A), B' = B'_0 \setminus V(P_B)$. 
Clearly, the parity of $A'$ and $B'$ are both even.

Next, we define absorbing structures for the left-over vertices and for absorbing vertices in order to guarantee divisibility (\ref{stepd:absorb} of Section \ref{sec:o_extremal1}). They need to be defined slightly differently as in the even case (Figure \ref{fig:absorber_odd}).

\begin{definition}
    Let $D \in \cbc{A', B'}$ and $u$ a vertex such that $\abs{ N(u) \cap D } \geq \tfrac n6$. Define $\xi_r(u)$ as follows.
    \begin{itemize}
         \item Select $D_1 = \cbc{d_1, d_2, \ldots, d_{(r+1)/2}}, D_2 = \cbc{d'_1, \ldots, d'_{(r+1)/2}} \subset  N(u) \cap D$, hence $r$ pairwise disjoint vertices.
         \item Select $D' = \cbc{ u'_1, \ldots, u'_{(r-1)/2}} \subset N(D_1) \cap N(D_2) \cap D \setminus (D_1 \cup D_2 \cup \cbc{u}) $.
         \item Select $E_0 = \cbc{ e_1, \ldots, e_{(r+1)/2} }$ in the joint $D$-neighborhood of $D_1 \setminus (D_2 \cup D' \cup \cbc{u})$.  
    \end{itemize}
    Define $\xi_r(u)$ as the graph containing $E_0, D_1, D_2, D'$ and $u$ as well as all the edges from $D_1$ to $D' \cup \cbc{u}$. Furthermore, take all edges from $D_2$ to $D' \cup \cbc{u}$ removing one matching of size $\tfrac{r+1}{2}$ and the edges from $E_0$ to $D_1$ removing a matching as well.
    Furthermore,  for two adjacent vertices $u_1, u_2 \in D$ define $\xi'_r(u_1, u_2)$ via
    \begin{itemize}
         \item Select $F_1 = \cbc{f_1, f_2, \ldots, f_{(r+1)/2}}, F' = \cbc{f'_1, \ldots, f'_{(r+1)/2}} \subset N(u_1) \cap N(u_2) \cap D$, hence $r+1$ pairwise disjoint vertices in the joint neighborhood of $u_1$ and $u_2$.
         \item Select the following edges
         \begin{itemize}
             \item $f_1u_1, \ldots, f_{(r-1)/2}u_1$,
             \item $f'_1u_1, \ldots, f'_{(r-1)/2}u_1$,
             \item $f_2u_2, \ldots, f_{(r+1)/2}u_2$,
             \item $f'_2u_2, \ldots, f'_{(r+1)/2}u_2$ and
             \item $f_1f'_1$ as well as $f_{(r+1)/2}f'_{(r+1)/2}$.
         \end{itemize}
         \item Finally, draw $(r-2)$ half-edges at each $f_i, f'_i$ and match them such that a simple graph is induced by the matching.
     \end{itemize}
\end{definition}
As $u_1, u_2 \in D$, hence the neighborhood of $u_1$, $u_2$ contains only vertices that, themselves, have $(\tfrac 12 - 30 r \alpha)n$ vertices from $D$, the joint neighborhood of those two vertices has size at least $(\tfrac 12 - 60 r \alpha )n$, $\xi'(u_1, u_2)$ is well defined. Observe that absorbing a vertex $u \not \in D$ consumes $2r + 1$ vertices from $D$ while absorbing $u_1, u_2 \in D$ consumes $r+3$ vertices (including $u_1, u_2$) in $D$.

As in the even case, we find a family of disjoint structures to absorb at least $100 \alpha n$ different vertices.

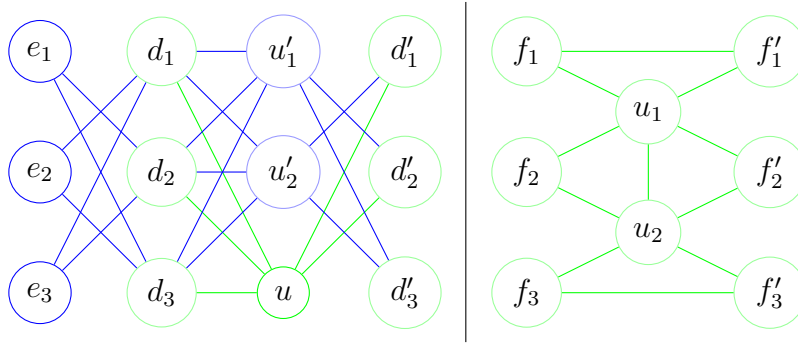
\begin{figure}
    \centering
    \begin{tikzpicture}[scale=0.8]
    
    \node[circle, draw=blue] (e1) at (0, 3) {$e_1$};
     \node[circle, draw=blue] (e2) at (0, 1) {$e_2$};
     \node[circle, draw=blue] (e3) at (0, -1) {$e_3$};

     \node[circle, draw=green!40] (d1) at (2, 3) {$d_1$};
     \node[circle, draw=green!40] (d2) at (2, 1) {$d_2$};
     \node[circle, draw=green!40] (d3) at (2, -1) {$d_3$};

     \node[circle, draw=green!40] (dp1) at (6, 3) {$d'_{1}$};
     \node[circle, draw=green!40] (dp2) at (6, 1) {$d'_{2}$};
     \node[circle, draw=green!40] (dp3) at (6, -1) {$d'_{3}$};
     
     \node[circle, draw=blue!40] (up1) at (4, 3) {$u'_{1}$};
     \node[circle, draw=blue!40] (up2) at (4, 1) {$u'_{2}$};
     \node[circle, draw=green] (u) at (4, -1) {$u$};

     \path[-] (u) edge[draw=green] (d1);
     \path[-] (u) edge[draw=green] (d2);
     \path[-] (u) edge[draw=green] (d3);
     \path[-] (u) edge[draw=green] (dp1);
     \path[-] (u) edge[draw=green] (dp2);
     
     \path[-] (d1) edge[draw=blue] (up1);
     \path[-] (d2) edge[draw=blue] (up1);
     \path[-] (d1) edge[draw=blue] (up2);
     \path[-] (d2) edge[draw=blue] (up2);
     \path[-] (d3) edge[draw=blue] (up1);
     \path[-] (d3) edge[draw=blue] (up2);
     
     \path[-] (d2) edge[draw=blue] (e1);
     \path[-] (d3) edge[draw=blue] (e1);
     \path[-] (d1) edge[draw=blue] (e2);
     \path[-] (d3) edge[draw=blue] (e2);
     \path[-] (d1) edge[draw=blue] (e3);
     \path[-] (d2) edge[draw=blue] (e3);

     \path[-] (dp2) edge[draw=blue] (up1);
     \path[-] (dp3) edge[draw=blue] (up1);
     \path[-] (dp1) edge[draw=blue] (up2);
     \path[-] (dp3) edge[draw=blue] (up2);
     
     
     \node (h1) at (7, 4) {};
     \node (h2) at (7, -2) {};

     \node[circle, draw=green!40] (f1) at (8, 3) {$f_1$};
     \node[circle, draw=green!40] (f2) at (8, 1) {$f_2$};
     \node[circle, draw=green!40] (f3) at (8, -1) {$f_3$};
     
     \node[circle, draw=green!40] (u1) at (10, 2) {$u_1$};
     \node[circle, draw=green!40] (u2) at (10, 0) {$u_2$};
     
     \node[circle, draw=green!40] (fp1) at (12, 3) {$f'_1$};
     \node[circle, draw=green!40] (fp2) at (12, 1) {$f'_2$};
     \node[circle, draw=green!40] (fp3) at (12, -1) {$f'_3$};

     \path[-] (h1) edge[draw=black] (h2);
     
     \path[-] (fp1) edge[draw=green] (u1);
     \path[-] (fp2) edge[draw=green] (u2);
     \path[-] (u2) edge[draw=green] (u1);
     
     \path[-] (fp2) edge[draw=green] (u1);
     \path[-] (fp3) edge[draw=green] (u2);
     \path[-] (f1) edge[draw=green] (u1);
     \path[-] (f2) edge[draw=green] (u2);
     
     \path[-] (f2) edge[draw=green] (u1);
     \path[-] (f3) edge[draw=green] (u2);
     
     \path[-] (fp1) edge[draw=green] (f1);
     \path[-] (fp3) edge[draw=green] (f3);

    \end{tikzpicture}
    
    \caption{Absorbers $\xi_5(u)$ (left) and $\xi'_5(u_1, u_2)$ (right) with $r=5$, where $u, u_1, u_2$ are the green vertices, the green vertices are inside the $A'$-neighborhood of $u$ (or $u_1, u_2$ respectively) and the blue vertices are vertices chosen from $A'$.}
    \label{fig:absorber_odd}
\end{figure}

Subsequently, we absorb $C$ using independent copies of $\xi_r(\cdot)$ such that the parity of the remaining vertices in the almost cliques is even. Then, as above, we glue the absorbers together by GO and are left with sets $A''$ and $B''$ (hence, $A'$ without the glued structures) and the path-like subgraph $P'_A$ of size at most $25r \alpha n$, hence $\abs{A''} \geq (\tfrac 12 - 30 r \alpha n)$. As each vertex of $C$ has degree at least $\tfrac{\alpha}{100} n$ into $A''$ and $B''$, we can absorb one vertex such that in the end $A''$ and $B''$ contain an even number of vertices. Furthermore, each vertex in $A''$ is connected to at least $(\tfrac 12 - 40 r \alpha)n$ vertices in $A''$ and the same applies to $B''$. 

Again, as in the even case, we need to make sure that $\abs{A''} \equiv 0 \pmod{2r}$, as we want to close the cycle by blocks of subsequently fully connected layers of sizes $\tfrac{r+1}{2}$ where the second block misses one matching of size $r+1$. If the divisibility condition holds, set $A''' = A''$ and proceed. Otherwise, if there is $0 < i < 2r$ such that $\abs{A''} \equiv i \pmod{2r}$, $i$ has to be even as the parity of $A''$ guarantees. Select $\tfrac i2$ pairs vertices $(a_{11}, a_{12}), \ldots,(a_{i/2, 1} a_{i/2, 2}) \in A'' \times A''$ and absorb them using disjoint instances $\xi'_r(a_{11}, a_{12}), \ldots, \xi'_r(a_{i/2, 1} a_{i/2, 2})$ with $D = A''$. As each absorber consumes $r + 3$ vertices, the divisibility condition now holds. Finally, as in the even case, glue the absorbed parts together with GO which does not change the divisibility by $r+1$. Thus, we are left with a set $A'''$ which consists of the vertices of $A''$ without the absorbed vertices and the gluing structures. Analogously, the same applies for $B'''$. Now, as in the even case, the result directly follows from Lemma~\ref{lem:blowup} and the following claim (\ref{stepd:cover} of Section \ref{sec:o_extremal1}).

\begin{claim}
The constructed subgraph is $r$-connected and $r$-regular.
\end{claim}
\begin{claimproof}
As in the even case, $r$-regularity as well as $r$-connectivity on the $\tfrac{r+1}{2}$-blow-up of the path part is obvious.
The first type of bridge (build with Claim~\ref{lemma_matching_edges_odd}, see Figure~\ref{fig:bridge_graph_h_odd} on the left) does not harm connectivity as before. In the second type of bridge (build with Claim~\ref{cor_matching_edges_odd}, see Figure~\ref{fig:bridge_graph_h_odd} on the right) only the special vertices $x_a$ and $x_b$ need our attention. But as they are of degree $r$ and connected to $(\tfrac{r-1}{2}$ vertices on both sides of the $K_{(r+1)/2), (r+1)/2}$, isolating a part of the graph is not possible either. The absorbing structures clearly sustain the connectivity property.
\end{claimproof}
This finishes the proof of the first extremal case.
\end{proof}

\section{Extremal Case II}
\label{sec:extremal2}
In this section we deal with the second extremal case and follow~\ref{stepe:find}--\ref{stepe:cover} as outlined in Section~\ref{sec:o_extremal2}.
We start by proving the auxiliary lemma for finding stars.

\begin{proof}[Proof of Lemma~\ref{lem:stars}]
Let $\alpha = \tfrac{1}{32s^2(s+1)}$.
Assume we have already found $0 \le t < 2m$ copies of $K_{1,s}$ and let $V'$ be the remaining vertices.
Then, by the maximum degree condition in $G$,
\[e(G[V']) \ge \tfrac 12 n (m+s-1) - t (s+1) 4 s \alpha n \,.\]
If $m \ge s+1$ this is at least $\tfrac 14 n (m+s-1) \ge \tfrac 12 ns$ and gives a vertex of degree at least $s$ in $G[V']$.
On the other hand, if $m \le s$ the above is at least $(\tfrac 12 s - \tfrac 14) n > \tfrac{s-1}{2}n$ and again this gives a vertex of degree at least $s$ in $G[V']$.
\end{proof}

\begin{proof}[Proof of Extremal Case II]
Let $r \ge 2$ and $s = \lceil \tfrac{r}{2} \rceil \ge 1$.
Let $\eps>0$ be given by Lemma~\ref{lem:blowup} on input $\tfrac 12$, $\tfrac 12$, and $r$.
We obtain $\alpha>0$ from Lemma~\ref{lem:stars} and additionally assume that $40 s^2 \alpha < \eps$.
Then let $G$ be an $n$-vertex graph with minimum degree $\delta(G) \ge \tfrac{n+r-2}{2}$ and $nr \equiv 0 \pmod 2$.
Further assume, that there is a partition of $V(G)$ into $A$ and $B$ of size $|A|+m=\tfrac 12 n=|B|-m$, where $0 \le m \le \alpha n$ such that between these sets we have minimum degree $\alpha n$ and all but at most $\alpha n$ vertices from $A$ (or $B$) have degree $|B|-\alpha n$ (or $|A|-\alpha n$) into $B$ (or $A$).

\noindent \textbf{\ref{stepe:find}.}
Note $\delta(G[B]) \ge m+s-1$.
Let $B' \subseteq B$ be the vertices of degree at most $2 s \alpha n$ in $G[B]$ and let $m'=|B \setminus B'|$.
If $m'<m$, then $\delta(G[B']) \ge (m-m')+s-1$ and we apply Lemma~\ref{lem:stars} to find $2(m-m')$ copies of $K_{1,s}$.
By choice of $B'$ each vertex from these copies of $K_{1,s}$ has degree at least $|A|-2s\alpha n$ into $A$.
Let $W$ be the union of the vertices from these copies of $K_{1,s}$

Afterwards, for $i=1,\dots,\min\{m', m\}$ we can pick a vertex $x_i \in B \setminus (B' \cup \{x_1,\dots,x_{i-1} \})$ and neighbours $y_{i,1},\dots,y_{i,r}$ of $x_i$ from $A \setminus (W \cup \bigcup_{j=1}^{i-1} \{y_{j,1},\dots,y_{j,r} \}$ such that $y_{i,1},\dots,y_{i,r}$ have degree at least $|A|-\alpha n$  into $A$.
For some $0 \le m' \le m$ we have obtained $m'$ copies of  $K_{1,r}$ and $2(m-m')$ copies of $K_{1,s}$ such that all vertices in the copies of $K_{1,s}$ and the leaves in the copies of $K_{1,r}$ have degree at least $|A|-2s\alpha n$ into $A$.

\noindent \textbf{\ref{stepe:absorb}.}
We will now iteratively absorb these copies of $K_{1,s}$ and $K_{1,r}$ into an $r$-regular path-structure.
Let $W$ be the set of vertices in the union of these copies.
We start with $r$ vertices $\mathbf{v}=(v_1,\dots,v_s)$ from $A \setminus W$ that have degree at least $|A|-\alpha n$ into $A$ and $s$ vertices $\mathbf{u}_0=(u_{0,1},\dots,u_{0,s})$ from $B \setminus W$ that have degree at least $|B|-\alpha n$ into $B$ such that $v_ju_{0,j'}$ is an edge for $1 \le j, j' \le s$.
We add the vertices in $\mathbf{v}$ and $\mathbf{u}_0$ to $W$.
Now for some $i=0,\dots,m$ let $W$ be the vertices used for the structure and in the remaining copies of $K_{1,s}$ and $K_{1,s}$ with $|W| \le 6 s i + 2s$ and let $\mathbf{u}_i=(u_{i,1},\dots,u_{i,r})$ be one end of the structure in $B$ such that $u_{i,j}$ has at least $|A|-2s\alpha n$ neighbours in $A$ for $j=1,\dots,s$.

If there are two copies of $K_{1,s}$ left then by alternating between $A$ and $B$ we find the structure shown in Figure~\ref{fig:absorbK1s} where the vertices on the left are $\mathbf{u}_i$ and the vertices on the right we denote by $\mathbf{u}_{i+1}=(u_{i+1,1},\dots,u_{i+1,s})$.
Note that $u_{i+1,1},\dots,u_{i+1,s}$ are exactly the leaves of one of the $K_{1,s}$ and, therefore, have at least degree $|A|-2s\alpha n$ into $A$.

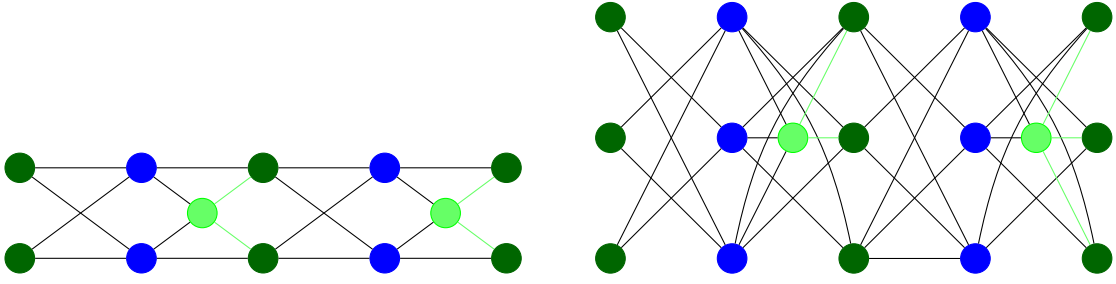
\begin{figure}
    \centering
    \begin{tikzpicture}[scale=0.8]
     \node[circle, draw=black!60!green, fill=black!60!green] (b11) at (-6, 0) {};
     \node[circle, draw=black!60!green, fill=black!60!green] (b12) at (-6, -1.5) {};

     \node[circle, draw=blue, fill=blue] (a11) at (-4, 0) {};
     \node[circle, draw=blue, fill=blue] (a12) at (-4, -1.5) {};
     
     \node[circle, draw=green, fill=green!60] (b1) at (-3, -0.75) {};
     
     \node[circle, draw=black!60!green, fill=black!60!green] (b21) at (-2, 0) {};
     \node[circle, draw=black!60!green, fill=black!60!green] (b22) at (-2, -1.5) {};

     \node[circle, draw=blue, fill=blue] (a21) at (-0, 0) {};
     \node[circle, draw=blue, fill=blue] (a22) at (-0, -1.5) {};
     
     \node[circle, draw=green, fill=green!60] (b2) at (1, -0.75) {};
     
     \node[circle, draw=black!60!green, fill=black!60!green] (b31) at (2, 0) {};
     \node[circle, draw=black!60!green, fill=black!60!green] (b32) at (2, -1.5) {};

     \path[-] (b11) edge[draw=black] (a11);
     \path[-] (b11) edge[draw=black] (a12);
     \path[-] (b12) edge[draw=black] (a11);
     \path[-] (b12) edge[draw=black] (a12);
    
     \path[-] (a11) edge[draw=black] (b21);
     \path[-] (a11) edge[draw=black] (b1);
     \path[-] (a12) edge[draw=black] (b22);
     \path[-] (a12) edge[draw=black] (b1);
     
     \path[-] (b1) edge[draw=green!60] (b21);
     \path[-] (b1) edge[draw=green!60] (b22);
     
     \path[-] (b21) edge[draw=black] (a21);
     \path[-] (b21) edge[draw=black] (a22);
     \path[-] (b22) edge[draw=black] (a21);
     \path[-] (b22) edge[draw=black] (a22);
    
     \path[-] (a21) edge[draw=black] (b31);
     \path[-] (a21) edge[draw=black] (b2);
     \path[-] (a22) edge[draw=black] (b32);
     \path[-] (a22) edge[draw=black] (b2);
     
     \path[-] (b2) edge[draw=green!60] (b31);
     \path[-] (b2) edge[draw=green!60] (b32);

    
    \end{tikzpicture}
    \qquad 
    \begin{tikzpicture}[scale=0.8]
    \node[circle, draw=black!60!green, fill=black!60!green] (b11) at (-6, 1) {};
    \node[circle, draw=black!60!green, fill=black!60!green] (b12) at (-6, -1) {};
    \node[circle, draw=black!60!green, fill=black!60!green] (b13) at (-6, -3) {};

     \node[circle, draw=blue, fill=blue] (a11) at (-4, 1) {};
     \node[circle, draw=blue, fill=blue] (a12) at (-4, -1) {};
     \node[circle, draw=blue, fill=blue] (a13) at (-4, -3) {};
     
     \node[circle, draw=green, fill=green!60] (b1) at (-3, -1) {};
     
     \node[circle, draw=black!60!green, fill=black!60!green] (b21) at (-2, 1) {};
     \node[circle, draw=black!60!green, fill=black!60!green] (b22) at (-2, -1) {};
     \node[circle, draw=black!60!green, fill=black!60!green] (b23) at (-2, -3) {};

     \node[circle, draw=blue, fill=blue] (a21) at (-0, 1) {};
     \node[circle, draw=blue, fill=blue] (a22) at (-0, -1) {};
     \node[circle, draw=blue, fill=blue] (a23) at (-0, -3) {};
     
     \node[circle, draw=green, fill=green!60] (b2) at (1, -1) {};
     
     \node[circle, draw=black!60!green, fill=black!60!green] (b31) at (2, 1) {};
     \node[circle, draw=black!60!green, fill=black!60!green] (b32) at (2, -1) {};
     \node[circle, draw=black!60!green, fill=black!60!green] (b33) at (2, -3) {};

     \path[-] (b11) edge[draw=black] (a12);
     \path[-] (b11) edge[draw=black] (a13);
     \path[-] (b12) edge[draw=black] (a11);
     \path[-] (b12) edge[draw=black] (a13);
     \path[-] (b13) edge[draw=black] (a11);
     \path[-] (b13) edge[draw=black] (a12);
    
     \path[-] (a11) edge[draw=black] (b22);
     \path[-] (a11) edge[draw=black] (b1);
     \path[-] (a11) edge[draw=black, bend left = 15] (b23);
     
     \path[-] (a12) edge[draw=black] (b21);
     \path[-] (a12) edge[draw=black] (b1);
     \path[-] (a12) edge[draw=black] (b23);
     
     \path[-] (a13) edge[draw=black, bend left = 15] (b21);
     \path[-] (a13) edge[draw=black] (b1);
     \path[-] (a13) edge[draw=black] (b22);
     
     \path[-] (b1) edge[draw=green!60] (b21);
     \path[-] (b1) edge[draw=green!60] (b22);
     
     \path[-] (b21) edge[draw=black] (a22);
     \path[-] (b21) edge[draw=black] (a23);
     \path[-] (b22) edge[draw=black] (a21);
     \path[-] (b22) edge[draw=black] (a23);
     \path[-] (b23) edge[draw=black] (a21);
     \path[-] (b23) edge[draw=black] (a22);
     \path[-] (b23) edge[draw=black] (a23);
    
     \path[-] (a21) edge[draw=black] (b32);
     \path[-] (a21) edge[draw=black] (b2);
     \path[-] (a21) edge[draw=black, bend left = 15] (b33);
    
     \path[-] (a22) edge[draw=black] (b31);
     \path[-] (a22) edge[draw=black] (b2);
     \path[-] (a22) edge[draw=black] (b33);
     
     \path[-] (a23) edge[draw=black, bend left = 15] (b31);
     \path[-] (a23) edge[draw=black] (b32);
    
     \path[-] (b2) edge[draw=green!60] (b31);
     \path[-] (b2) edge[draw=green!60] (b32);
     \path[-] (b2) edge[draw=green!60] (b33);
     \end{tikzpicture}
    
    \caption{Including two copies of $K_{1,s}$ into an $r$-regular path structure in the even ($r=4$) and odd case ($r=5$) with blue vertices in $A$ and green vertices in $B$.}
    \label{fig:absorbK1s}
\end{figure}

Otherwise, we pick a copy of $K_{1,r}$ and find the structure shown in Figure~\ref{fig:absorbK1r} (this is an $\tfrac r2$-blow-up of a path), where the vertices on the left are $\mathbf{u}_i$ and the vertices on the right we denote by $\mathbf{u}_{i+1}=(u_{i+1,1},\dots,u_{i+1,s})$.
We do not need any assumption on the degree of the centre vertex of the $K_{1,r}$ as we do not need any additional edges containing it.
Again $u_{i+1,1},\dots,u_{i+1,r}$ have at least degree $|A|- 2s\alpha n$ into $A$.
\begin{figure}
    \centering
    \begin{tikzpicture}[scale=0.8]
     \node[circle, draw=black!60!green, fill=black!60!green] (b11) at (-6, 0) {};
     \node[circle, draw=black!60!green, fill=black!60!green] (b12) at (-6, -1.5) {};

     \node[circle, draw=blue, fill=blue] (a11) at (-4, 0) {};
     \node[circle, draw=blue, fill=blue] (a12) at (-4, -1.5) {};
     
     \node[circle, draw=black!60!green, fill=black!60!green] (b21) at (-2, 0) {};
     \node[circle, draw=black!60!green, fill=black!60!green] (b22) at (-2, -1.5) {};

     \node[circle, draw=green!60, fill=green!60] (b1) at (-0, 0) {};
     \node[circle, draw=blue, fill=blue] (a22) at (-0, -1.5) {};

     \node[circle, draw=black!60!green, fill=black!60!green] (b31) at (2, 0) {};
     \node[circle, draw=black!60!green, fill=black!60!green] (b32) at (2, -1.5) {};

     \path[-] (b11) edge[draw=black] (a11);
     \path[-] (b11) edge[draw=black] (a12);
     \path[-] (b12) edge[draw=black] (a11);
     \path[-] (b12) edge[draw=black] (a12);
    
     \path[-] (a11) edge[draw=black] (b21);
     \path[-] (a11) edge[draw=black] (b22);
     \path[-] (a12) edge[draw=black] (b22);
     \path[-] (a12) edge[draw=black] (b21);

     \path[-] (b21) edge[draw=green!60] (b1);
     \path[-] (b21) edge[draw=black] (a22);
     \path[-] (b22) edge[draw=green!60] (b1);
     \path[-] (b22) edge[draw=black] (a22);
    
     \path[-] (b1) edge[draw=green!60] (b31);
     \path[-] (a22) edge[draw=black] (b31);
     \path[-] (b1) edge[draw=green!60] (b32);
     \path[-] (a22) edge[draw=black] (b32);
     
    \end{tikzpicture}
    \qquad 
    \begin{tikzpicture}[scale=0.8]
    \node[circle, draw=black!60!green, fill=black!60!green] (b11) at (-6, 1) {};
    \node[circle, draw=black!60!green, fill=black!60!green] (b12) at (-6, -1) {};
    \node[circle, draw=black!60!green, fill=black!60!green] (b13) at (-6, -3) {};

     \node[circle, draw=blue, fill=blue] (a11) at (-4, 1) {};
     \node[circle, draw=blue, fill=blue] (a12) at (-4, -1) {};
     \node[circle, draw=blue, fill=blue] (a13) at (-4, -3) {};

     \node[circle, draw=black!60!green, fill=black!60!green] (b21) at (-2, 1) {};
     \node[circle, draw=black!60!green, fill=black!60!green] (b22) at (-2, -1) {};
     \node[circle, draw=black!60!green, fill=black!60!green] (b23) at (-2, -3) {};

     \node[circle, draw=green!60, fill=green!60] (b1) at (-0, 1) {};
     \node[circle, draw=blue, fill=blue] (a22) at (-0, -1) {};
     \node[circle, draw=blue, fill=blue] (a23) at (-0, -3) {};

     \node[circle, draw=black!60!green, fill=black!60!green] (b31) at (2, 1) {};
     \node[circle, draw=black!60!green, fill=black!60!green] (b32) at (2, -1) {};
     \node[circle, draw=black!60!green, fill=black!60!green] (b33) at (2, -3) {};

     \path[-] (b11) edge[draw=black] (a12);
     \path[-] (b11) edge[draw=black] (a13);
     \path[-] (b12) edge[draw=black] (a11);
     \path[-] (b12) edge[draw=black] (a13);
     \path[-] (b13) edge[draw=black] (a11);
     \path[-] (b13) edge[draw=black] (a12);
    
     \path[-] (a11) edge[draw=black] (b22);
     \path[-] (a11) edge[draw=black] (b21);
     \path[-] (a11) edge[draw=black] (b23);
     
     \path[-] (a12) edge[draw=black] (b21);
     \path[-] (a12) edge[draw=black] (b22);
     \path[-] (a12) edge[draw=black] (b23);
     
     \path[-] (a13) edge[draw=black] (b21);
     \path[-] (a13) edge[draw=black] (b22);
     \path[-] (a13) edge[draw=black] (b22);
     
     \path[-] (b21) edge[draw=black] (a22);
     \path[-] (b21) edge[draw=black] (a23);
     \path[-] (b22) edge[draw=green!60] (b1);
     \path[-] (b22) edge[draw=black] (a23);
     \path[-] (b23) edge[draw=green!60] (b1);
     \path[-] (b23) edge[draw=black] (a22);
    
     \path[-] (b1) edge[draw=green!60] (b32);
     \path[-] (b1) edge[draw=green!60] (b31);
     \path[-] (b1) edge[draw=green!60] (b33);
    
     \path[-] (a22) edge[draw=black] (b31);
     \path[-] (a22) edge[draw=black] (b32);
     \path[-] (a22) edge[draw=black] (b33);
     
     \path[-] (a23) edge[draw=black] (b31);
     \path[-] (a23) edge[draw=black] (b32);
     \path[-] (a23) edge[draw=black] (b33);

     \end{tikzpicture}
    
    \caption{Including one copy of $K_{1,r}$ into an $r$-regular path structure in the even ($r=4$) and odd case ($r=5$) with blue vertices in $A$ and green vertices in $B$.}
    \label{fig:absorbK1r}
\end{figure}
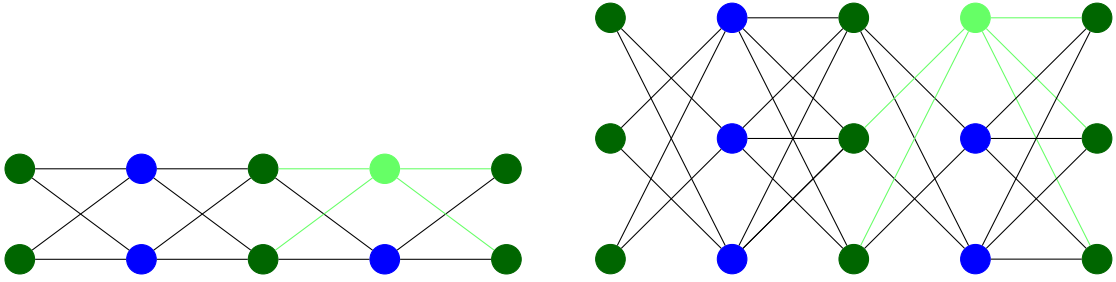

We add all new vertices in the structure to $W$. We can repeat this until all copies of $K_{1,s}$ and $K_{1,r}$ are covered, because in each step $W$ will increase by at most $6r$ vertices and all vertices have large enough neighbourhoods.
We let $\mathbf{u}'=(u_{m,1},\dots,u_{m,s})$ be the final end of this construction.
Now let $A_1=A \setminus W$ and $B_2=B \setminus W$ and note that $|A_1|=|B_1|$ because our construction uses $2m$ vertices more from $B$ than from $A$.

\noindent \textbf{\ref{stepe:absorb2}.}
For the next step let $A' \subseteq A_1$ be the vertices of degree at most $|A_1|-5 s \alpha n$ into $B_1$ and  $B' \subseteq B_1$ be the vertices of degree at most $|A_1|-5 s \alpha n$ into $A_1$.
Note that $|A'|,|B'| \le \alpha n$, because we removed at most $4 s \alpha n$ vertices from each of $A$ and $B$ to get $A_1$ and $B_1$.
By using an $\tfrac r2$-blow-up of a path (similar as in Figure~\ref{fig:absorbK1r}) we cover the vertices in $A'$ and $B'$ with our $r$-regular path structure covering in total at most $12 s \alpha n$ additional vertices.
For this we iteratively extend from $\mathbf{u}'$.
Let $W$ be the vertices used for covering $A'$ and $B'$ and let $\mathbf{u}=(u_1,\dots,u_s)$ be the last vertices of this construction in $B$ and note that we can assume that they have degree at least $|A|-20 s \alpha n$ into $A$.
We let $A_2=(A_1 \setminus W)$ and $B_2=(B_1 \setminus W)$ and note that $|A_2|=|B_2|$.

\noindent \textbf{\ref{stepe:cover}.}
We have that every vertex from $A_2$ (or $B_2$) has degree at least $|A_2|-20 s \alpha n$ into $B_2$ (or into $A_2$).
Then it is easy to see that $(A_2,B_2)$ is $(\eps,\tfrac 12)$-super-regular as $40 s \alpha < \eps$.
Moreover, the vertices in $\mathbf{v}$ have $|B_2| - 20s^2\alpha n \ge \tfrac 12 |B_2|$ common neighbours in $B_2$ and similarly for $\mathbf{u}$ with $A_2$.
We apply Lemma~\ref{lem:blowup} to cover the remaining vertices of $A_2$ and $B_2$ with the $\tfrac r2$-blow-up of a path such that the ends connect to $\mathbf{v}$ and $\mathbf{u}$.
This completes the construction of a spanning $r$-regular structure in $G$.
To see that it is also $r$-connected it suffices to note that we have the $\tfrac r2$-blow-up of a path except for some parts replaced by the graphs from Figure~\ref{fig:absorbK1s}, which do not harm this property.
\end{proof}

\section{Non-Extremal Case}
\label{sec:non-extremal}

In this section we deal with the case that $G$ is not $\alpha$-extremal.
Recall, that the assumption implies that for any two sets $A,B \subseteq V(G)$ of size $(\tfrac12 - \alpha)n \le |A|,|B| \le \tfrac n2$ we have $d(A,B) \ge \alpha$.
We will follow~\ref{step:regularise}--\ref{step:spanning} as outlined in Section~\ref{sec:o_non-extremal}.

\begin{proof}[Proof of Non-Extremal Case]
Given $r \ge 3$ and $0<\alpha<\tfrac{1}{32}$ we choose constants such that
\begin{align*}
    \varepsilon \ll \nu \ll d \ll \beta \ll \alpha\,,
\end{align*}
where, in particular,
\begin{align*}
    \eps \le \nu, \qquad \nu \le d^{s+1}, \qquad 10 s d \le \beta, \qquad 500 s \beta \le \alpha \,
\end{align*}
and $2 \eps$ is small enough for Lemma~\ref{lem:blowup} with input $\tfrac d2$, $\tfrac 14 d^s$, and $r$.
Let $M$ be given by Lemma~\ref{lem:regularity} on input $\eps$ and let $s=\lceil \tfrac r2 \rceil$.

\smallskip
\noindent \textbf{\ref{step:regularise}.}
Let $G$ be an $n$-vertex graph with minimum degree $\delta(G) \ge (\tfrac 12-\beta)n$.
From Lemma~\ref{lem:regularity} we get a partition of the vertex set $V(G)$ into $\ell+1 \le M$ clusters $V_0,\dots,V_\ell$ of size $L$ and a subgraph $G' \subseteq G$ such that~\ref{reg:size}--\ref{reg:reg} hold.
We denote by $R$ the graph on vertex set $[\ell]$ with edges $ij$ if and only if the pair $(V_i,V_j)$ is $(\eps,d)$-regular.
In $R$ we have minimum degree $\delta(R) \ge (\tfrac 12-\beta-2d)\ell$, because otherwise with~\ref{reg:ind} there would be a vertex with degree in $G'$ at most $(\tfrac 12-\beta-2d) \ell \cdot \tfrac{n}{\ell} + \eps n < (\tfrac 12-\beta) n - (d+\eps)n$ contradicting~\ref{reg:deg}.
Similarly, we can deduce that $R$ is not $\tfrac \alpha 2$-extremal.
Otherwise, there would be two sets of vertices $\cA$, $\cB$ in $R$ such that $(\tfrac 12-\tfrac \alpha2 )\ell \le |\cA|,|\cB| \le \tfrac 12 \ell$ and $d(\cA,\cB)< \tfrac \alpha2$.
Then $A=\bigcup_{i \in \cA} V_i$ and $B=\bigcup_{i \in \cB} V_i$ both have size at most $\tfrac \ell 2 \cdot \tfrac{n}{\ell} = \tfrac n2$ and at least $(\tfrac 12- \tfrac \alpha 2) \ell \cdot (1-\eps) \tfrac{n}{\ell} \ge (\tfrac 12-\alpha)n$ and we have
\[ d(A,B) = \frac{e(A,B)}{|A||B|} \le \frac{\tfrac \alpha 2 \cdot (\tfrac n \ell)^2 \cdot |\cA||\cB| + |A| (d+\eps) n}{|A| |B|} \le \frac{\tfrac \alpha 2}{(1-\eps)^2} + \frac{(d+\eps)}{\tfrac 12-\alpha} \le \alpha\,,\]
which contradicts our assumption that $G$ is not $\alpha$-extremal.
We will repeatedly use the following fact that holds as $R$ is not $\tfrac \alpha2$-extremal.
\begin{fact}
\label{fact:non-extremal}
    For any two sets $\cA, \cB \subseteq V(R)$ of size at least $(\tfrac 12 - \tfrac \alpha 2)\ell$ there is an edge $AB \in E(R)$ with $A \in \cA$ and $B \in \cB$.
\end{fact}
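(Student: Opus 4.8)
The plan is to argue by contradiction and reduce directly to the non-extremality of $R$ that was established just above the statement. Suppose $\cA, \cB \subseteq V(R)$ both have size at least $(\tfrac12 - \tfrac\alpha2)\ell$ but that there is \emph{no} edge $AB \in E(R)$ with $A \in \cA$ and $B \in \cB$. I would then pass to subsets of the correct size: set $k = \lceil (\tfrac12 - \tfrac\alpha2)\ell \rceil$ and choose arbitrary $\cA' \subseteq \cA$, $\cB' \subseteq \cB$ with $|\cA'| = |\cB'| = k$. This is possible because $|\cA|, |\cB| \ge (\tfrac12 - \tfrac\alpha2)\ell$ together with integrality forces $|\cA|, |\cB| \ge k$.

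Since $\cA' \subseteq \cA$ and $\cB' \subseteq \cB$, the assumed absence of edges between $\cA$ and $\cB$ gives $e_R(\cA', \cB') = 0$, hence $d(\cA', \cB') = 0 < \tfrac\alpha2$. It then remains only to check that $\cA'$ and $\cB'$ lie in the size window required by the definition of $\tfrac\alpha2$-extremality, namely $(\tfrac12 - \tfrac\alpha2)\ell \le |\cA'|, |\cB'| \le \tfrac\ell2$. The lower bound is immediate from $k \ge (\tfrac12 - \tfrac\alpha2)\ell$; for the upper bound note $k \le (\tfrac12 - \tfrac\alpha2)\ell + 1 \le \tfrac\ell2$ as soon as $\tfrac\alpha2 \ell \ge 1$, and this holds because the number of clusters $\ell$ is large: by~\ref{reg:size} each $|V_i| = L \le \eps n$ while $\ell L \ge (1-\eps)n$, so $\ell \ge (1-\eps)/\eps \gg 2/\alpha$. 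Thus $(\cA', \cB')$ witnesses that $R$ is $\tfrac\alpha2$-extremal, contradicting what was shown above, and the fact follows.

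The argument is essentially a restatement of non-extremality, so the only real point requiring care is the bookkeeping in the previous paragraph: the hypothesis guarantees only \emph{size at least} $(\tfrac12 - \tfrac\alpha2)\ell$, whereas the extremality definition speaks about sets whose size lies in the band $[(\tfrac12 - \tfrac\alpha2)\ell, \tfrac\ell2]$, so one must shrink $\cA, \cB$ to subsets of size exactly $k$ and verify $k \le \tfrac\ell2$. I would also stress that no disjointness of $\cA'$ and $\cB'$ is needed, since the extremality definition constrains only the density $d(\cA', \cB')$, and the absence of edges forces this density to be $0$ regardless of any overlap between the two sets.
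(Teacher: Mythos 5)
Your proof is correct and is essentially the paper's argument: the paper simply asserts that the fact "holds as $R$ is not $\tfrac\alpha2$-extremal," and your write-up supplies exactly the routine bookkeeping (shrinking $\cA,\cB$ to subsets of size $k$ in the window $[(\tfrac12-\tfrac\alpha2)\ell,\tfrac\ell2]$, using $\ell\ge(1-\eps)/\eps\gg 2/\alpha$, and noting that disjointness is not required) that makes this one-liner rigorous.
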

In the following we will abuse notation and also treat the clusters as vertices of $R$.

\smallskip
\noindent \textbf{\ref{step:matching}.}
Next let $M$ be a largest matching in $R$ and $M_0 \subseteq [\ell]$ be the clusters not covered by $M$.
Naturally, $M_0$ is an independent set in $R$ and if there are at least two vertices $u$ and $v$ in $M_0$ then no neighbour of $u$ is connected to a neighbour of $v$ by an edge of $M$.
Therefore, $2|M| \ge \deg_R(u)+\deg_R(v)$ and $|M | \ge (\tfrac 12-\beta-2d) \ell$.
We let $\ell'=|M|$ and denote the regular pairs corresponding to edges of $M$ by $(X_i,Y_i)$ for $i=1,\dots,\ell'$.

\smallskip
\noindent \textbf{\ref{step:connect}.}
For any clusters $Z$ and $W$ we call an $s$-tuple $\mathbf{z}=(z_1,\dots, z_s)$ from $Z$ \emph{well-connected into $W$} if the vertices $z_1,\dots,z_s$ have at least $\tfrac 12 d^s L$ common neighbours in $W$.
Now fix any $i \in [\ell']$.
We want to connect $Y_i$ and $X_{i+1}$ by the $\tfrac r2$-blow-up of a path.
For this we consider the neighbours $\cW$ and $\cZ$ in $R$ of $X_{i+1}$ and $Y_i$, respectively.
It follows from the minimum degree in $R$ that $\cW$ and $\cZ$ have size at least $(\tfrac 12 - \tfrac \alpha 2)\ell$.
By Fact~\ref{fact:non-extremal} there is $W \in \cW$ and $Z \in \cZ$ such that $WZ$ is an edge in $R$.

All but at most $2 s \eps L^s$ $s$-tuples $\mathbf{x}=(x_1,\dots, x_s)$ from $X_{i+1}$ are well connected into $W$ and $Y_{i+1}$.
The same holds for tuples from $Y_i$, $W$, and $Z$ with respect to the neighbouring clusters.
We fix tuples $\mathbf{x}=(x_1,\dots, x_s)$, $\mathbf{w}=(w_1,\dots, w_s)$, $\mathbf{z}=(z_1,\dots, z_s)$, and $\mathbf{y}=(y_1,\dots, y_s)$ from $X$, $W$, $Z$, and $Y$, respectively, such that $\mathbf{x}\mathbf{w}\mathbf{z}\mathbf{y}$ gives the $\tfrac r2$-blow up of a path on $4$ vertices.
We denote this path by $P_i$ and remove any internal vertices (those in $\mathbf{w}$ and $\mathbf{z}$) from the clusters.
We can repeat this for all $i$, because we need only $4 s \ell'$ vertices in total.

\smallskip
\noindent \textbf{\ref{step:superreg}.}
To make the matching edges super-regular, we let $i \in [\ell']$ and apply Lemma~\ref{lem:superreg} to the pair $(Y_i,X_i)$.
After removing a few additional vertices we arrive at sets $Y_i$ and $X_i$ such that $|Y_i|=|X_i| = L' \ge (1-2\eps)L$, where $L' \equiv 0 \pmod{s}$, and the pair $(Y_i,X_i)$ is $(2 \eps, d-2\eps)$-super-regular.
Note that we can do this such that the end-tuple $\mathbf{x}$ of $P_{i-1}$ is contained in $X_i$ and the end-tuple $\mathbf{y}$ of $P_{i}$ is contained in $Y_i$.
We add the vertices removed during this procedure and also the vertices that belong to clusters of $M_0$ to $V_0$ and note that $|V_0| \le \eps n + \ell' 4 \eps L + \beta n + 2dn \le 2 \beta n$.

\smallskip
\noindent \textbf{\ref{step:absorb}. Setup.}
We want to absorb $V_0$ by extending the paths $P_i$.
After each extension we need to maintain the location of the end-tuples and also ensure that they are well-connected.
During the procedure we will have to deal with sets of already covered vertices.
For this let $W_0$ be a set of size at most $18 s^2 \nu n$ and let $W$ be a set of size at most $20 s \beta n$.
These will be the sets of vertices that we already used.
There are at most $18 s^2 \nu n / (\tfrac 14 d^s \tfrac n\ell) = 72 s^2 \nu d^{-s}  \ell \le 8 s \beta \ell$ clusters that intersect $W_0$ in at least $\tfrac 14 d^s \tfrac n\ell$ vertices and at most $20 s \beta n / (\tfrac 12 \cdot \tfrac n\ell) \le 40 s \beta \ell$ clusters that intersect $W$ in at least $\tfrac 12 \cdot \tfrac n\ell$ vertices.
We denote by $H$ the set of all clusters that do not have this property.

Now consider a vertex $v \in V_0$.
There are at most $(\tfrac 12 + 50 s \beta) \ell$ clusters that intersect $N_G(v) \setminus (W \cup W_0)$ in less than $d \tfrac n \ell$ vertices.
Therefore, there are at least $(\tfrac 12 - 100 s \beta) \ell$ clusters in $H$ that intersect $N_G(v) \setminus (W \cup W_0)$ in at least $d \tfrac n \ell$ vertices.
We denote this set of clusters by $H(v)$.
Similarly, let $H_M(v)$ be the clusters form $H$, which share an edge of $M$ with another cluster from $H(v)$ and note that we have the same lower bound as $M$ is a matching.
Summing up we have $|H| \ge (1-50 s \beta)\ell$ and $|H(v)|,|H_M(v)| \ge (\tfrac 12 - 100 s \beta) \ell$ for all $v \in V_0$.
Note that $H(v)$ and $H_M(v)$ are large enough for Fact~\ref{fact:non-extremal}.

\smallskip
\textbf{Covering $2s$ vertices.}
We let $W=\emptyset$ and $W_0$ be all internal vertices (not in end-tuples) of the paths $P_1,\dots,P_\ell$.
We pick any $2s$ vertices $v_1,\dots,v_{2s}$ from $V_0 \setminus (W \cup W_0)$ and our goal is to embed them, such that in any pair $(X_i,Y_i)$ we still have $|X_i|=|Y_i| \equiv 0 \pmod s$.
Let $i_1$ be such that $X_{i_1} \in H(v_1)$ and $Y_{i_1} \in H$.
As the end-tuple of $P_{i_1}$ in $X_{i_1}$ is well-connected into $Y_{i_1}$ and $(N_G(v)\cap X_{i_1}) \setminus (W \cup W_0)$ is of size at least $d \tfrac n \ell$, we can greedily pick tuples $\mathbf{x}_1$, $\mathbf{x}_2$ in $X_{i_1}$ and $\mathbf{y}_1$, $\mathbf{y}_2$ in $Y_{i_1}$ with the exception that $\mathbf{y}_1$ contains $v_1$ and such that $\mathbf{x}_1\mathbf{y}_1\mathbf{x}_2\mathbf{y}_2P_{i_1}$ gives the $\tfrac r2$-blow-up of a path and $\mathbf{x_1}$ is well-connected into $Y_{i_1}$.
Now we remove the internal vertices (those in $\mathbf{x_2}$, $\mathbf{y_1}$, $\mathbf{y_2}$ and the end-tuple $\mathbf{x}$ of $P_{i_1}$) from the clusters and add them to $W_0$ (this adds $4s$ vertices; see Figure~\ref{fig:Step51}).
We note that $|Y_{i_1}|-1=|X_{i_1}|\equiv 0 \pmod s$ and let $P_{i_1}$ be the longer path.

\begin{figure}
    \centering
    \begin{tikzpicture}[scale=0.8, every node/.style={inner sep=0.2,outer sep=0.2}]

    \node[circle, draw=red, fill = red, minimum width = 6, font=\tiny] (v1) at (0,0) {};
    \node[] at (0.5, 0) {$v_1$};
    \node[circle, draw=orange, fill=orange, font=\tiny, minimum width = 6] (x11) at (-2.7,1) {};
    \node[circle, draw=orange, fill=orange, font=\tiny, minimum width = 6] (x12) at (-2.7,1.5) {};
    \node[] at (-3.2, 1.25) {$\mathbf{x_1}$};
    
    \node[circle, draw=black, fill=black, minimum width = 6, font=\tiny] (x21) at (-2.25,2.25) {};
    \node[circle, draw=black,fill=black, minimum width = 6,  font=\tiny] (x22) at (-2.25,2.75) {};
    \node[] at (-2.7, 2.5) {$\mathbf{x_2}$};
    
    \node[circle, draw=black, fill=black, minimum width = 6, font=\tiny] (y1) at (1,1) {};
    \node[] at (1.5, 1) {$y_1$};
    \node[circle, draw=black,fill=black, minimum width = 6,  font=\tiny] (y21) at (1,2.5) {};
    \node[circle, draw=black,fill=black, minimum width = 6,  font=\tiny] (y22) at (1,3) {};
    \node[] at (1.5, 2.75) {$\mathbf{y_2}$};

    \node[circle, draw=orange, fill=orange, font=\tiny, minimum width = 6] (x1) at (-2.7,3.5) {};
    \node[circle, draw=orange, fill=orange, font=\tiny, minimum width = 6] (x2) at (-2.7,4) {};
    \node[] at (-3.2, 3.75) {$\mathbf{x}$};
    
    \node[ellipse, draw = black, minimum width = 60, minimum height = 100, label = above:{$X_{i_1}$}] (Xi) at (-2.75,2.375) {};
    \node[ellipse, draw = black, minimum width = 60, minimum height = 100, label = above:{$Y_{i_1}$}] (Yi) at (1.25,2.375) {};
    
    \path[-] (Xi) edge[draw=blue!20, line width=20pt] (Yi);
    
    \path[-] (v1) edge[draw=red] (x11);
    \path[-] (v1) edge[draw=red] (x21);
    \path[-] (v1) edge[draw=red] (x12);
    \path[-] (v1) edge[draw=red] (x22);
    
    \path[-] (y1) edge[draw=black] (x11);
    \path[-] (y1) edge[draw=black] (x21);
    \path[-] (y1) edge[draw=black] (x12);
    \path[-] (y1) edge[draw=black] (x22);
    
    \path[-] (y21) edge[draw=black] (x11);
    \path[-] (y22) edge[draw=black] (x21);
    \path[-] (y22) edge[draw=black] (x12);
    \path[-] (y21) edge[draw=black] (x22);
    
    \path[-] (y21) edge[draw=black] (x1);
    \path[-] (y22) edge[draw=black] (x2);
    \path[-] (y21) edge[draw=black] (x2);
    \path[-] (y22) edge[draw=black] (x1);

     \end{tikzpicture}
    
    \caption{Absorbing vertex $v_1$ in the case $r=2s=4$ if $\mathbf{x}$ is the current end of path $P_i$ and $\mathbf{x_1}$ is the new end.}
    \label{fig:Step51}
\end{figure}
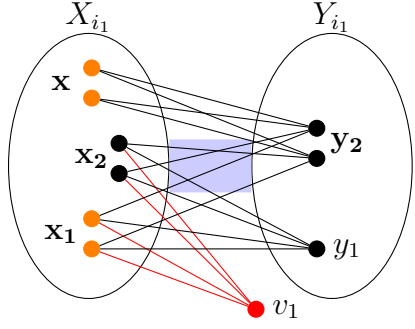

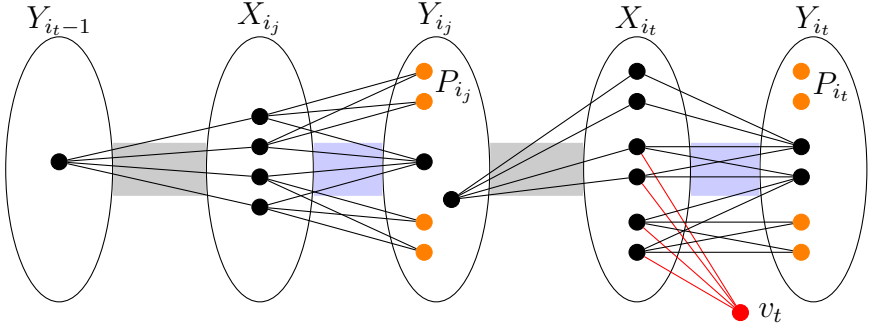
\begin{figure}
    \centering
    \begin{tikzpicture}[scale=0.8, every node/.style={inner sep=0.2,outer sep=0.2}]
    
    \node[circle, draw=black,fill=black, minimum width = 6,  font=\tiny] (yi) at (-11.2, 2.5) {};
    
    \node[circle, draw=black,fill=black, minimum width = 6,  font=\tiny] (xj1) at (-7.9, 1.75) {};
    \node[circle, draw=black,fill=black, minimum width = 6,  font=\tiny] (xj2) at (-7.9, 2.25) {};
    \node[circle, draw=black,fill=black, minimum width = 6,  font=\tiny] (xj3) at (-7.9, 2.75) {};
    \node[circle, draw=black,fill=black, minimum width = 6,  font=\tiny] (xj4) at (-7.9, 3.25) {};

    \node[circle, draw=black,fill=black, minimum width = 6,  font=\tiny] (yj) at (-4.75,1.875) {};
    
    \node[circle, draw=orange,fill=orange, minimum width = 6,  font=\tiny] (yj1) at (-5.2, 1) {};
    \node[circle, draw=orange,fill=orange, minimum width = 6,  font=\tiny] (yj2) at (-5.2, 1.5) {};
    \node[circle, draw=black,fill=black, minimum width = 6,  font=\tiny] (yj3) at (-5.2, 2.5) {};
    \node[circle, draw=orange,fill=orange, minimum width = 6,  font=\tiny] (yj4) at (-5.2, 3.5) {};
    \node[circle, draw=orange,fill=orange, minimum width = 6,  font=\tiny] (yj5) at (-5.2, 4) {};
    \node[] at (-4.7, 3.75) {$P_{i_j}$};

    \node[circle, draw=red, fill = red, minimum width = 6, font=\tiny] (v1) at (0,0) {};
    \node[] at (0.5, 0) {$v_t$};
    \node[circle, draw=black, fill=black, font=\tiny, minimum width = 6] (x11) at (-1.7,1) {};
    \node[circle, draw=black, fill=black, font=\tiny, minimum width = 6] (x12) at (-1.7,1.5) {};

    \node[circle, draw=black, fill=black, minimum width = 6, font=\tiny] (x21) at (-1.7,2.25) {};
    \node[circle, draw=black,fill=black, minimum width = 6,  font=\tiny] (x22) at (-1.7,2.75) {};
    
    \node[circle, draw=black, fill=black, minimum width = 6, font=\tiny] (x31) at (-1.7,3.5) {};
    \node[circle, draw=black,fill=black, minimum width = 6,  font=\tiny] (x32) at (-1.7,4) {};

    \node[circle, draw=orange, fill=orange, minimum width = 6, font=\tiny] (y11) at (1,1) {};
    \node[circle, draw=orange, fill=orange, minimum width = 6, font=\tiny] (y12) at (1,1.5) {};

    \node[circle, draw=black,fill=black, minimum width = 6,  font=\tiny] (y21) at (1,2.25) {};
    \node[circle, draw=black,fill=black, minimum width = 6,  font=\tiny] (y22) at (1,2.75) {};
    
    \node[circle, draw=orange,fill=orange, minimum width = 6,  font=\tiny] (y31) at (1,3.5) {};
    \node[circle, draw=orange,fill=orange, minimum width = 6,  font=\tiny] (y32) at (1,4) {};
    
    \node[] at (1.5, 3.75) {$P_{i_t}$};

    \node[ellipse, draw = black, minimum width = 40, minimum height = 100, label = above:{$Y_{i_{t}-1}$}] (Yi) at (-11.2,2.375) {};
    
    \node[ellipse, draw = black, minimum width = 40, minimum height = 100, label = above:{$Y_{i_j}$}] (Yj) at (-5,2.375) {};
    \node[ellipse, draw = black, minimum width = 40, minimum height = 100, label = above:{$X_{i_j}$}] (Xj) at (-7.9,2.375) {};
    
    \node[ellipse, draw = black, minimum width = 40, minimum height = 100, label = above:{$X_{i_t}$}] (Xt) at (-1.7,2.375) {};
    \node[ellipse, draw = black, minimum width = 40, minimum height = 100, label = above:{$Y_{i_t}$}] (Yt) at (1.2,2.375) {};
    
    \path[-] (Xt) edge[draw=blue!20, line width=20pt] (Yt);
    \path[-] (Xt) edge[draw=black!20, line width=20pt] (Yj);
    \path[-] (Xj) edge[draw=blue!20, line width=20pt] (Yj);
    \path[-] (Xj) edge[draw=black!20, line width=20pt] (Yi);
    
    \path[-] (v1) edge[draw=red] (x11);
    \path[-] (v1) edge[draw=red] (x21);
    \path[-] (v1) edge[draw=red] (x12);
    \path[-] (v1) edge[draw=red] (x22);
    
    \path[-] (y11) edge[draw=black] (x11);
    \path[-] (y12) edge[draw=black] (x11);
    \path[-] (y11) edge[draw=black] (x12);
    \path[-] (y12) edge[draw=black] (x12);
    
    \path[-] (y21) edge[draw=black] (x11);
    \path[-] (y21) edge[draw=black] (x12);
    \path[-] (y21) edge[draw=black] (x21);
    \path[-] (y21) edge[draw=black] (x22);
    
    \path[-] (y22) edge[draw=black] (x31);
    \path[-] (y22) edge[draw=black] (x32);
    \path[-] (y22) edge[draw=black] (x21);
    \path[-] (y22) edge[draw=black] (x22);
    
    \path[-] (yj) edge[draw=black] (x31);
    \path[-] (yj) edge[draw=black] (x32);
    \path[-] (yj) edge[draw=black] (x21);
    \path[-] (yj) edge[draw=black] (x22);
    
    \path[-] (xj1) edge[draw=black] (yj1);
    \path[-] (xj1) edge[draw=black] (yj2);
    \path[-] (xj1) edge[draw=black] (yj3);
    
    \path[-] (xj2) edge[draw=black] (yj1);
    \path[-] (xj2) edge[draw=black] (yj2);
    \path[-] (xj2) edge[draw=black] (yj3);
    
    \path[-] (xj3) edge[draw=black] (yj4);
    \path[-] (xj3) edge[draw=black] (yj5);
    \path[-] (xj3) edge[draw=black] (yj3);
    
    \path[-] (xj4) edge[draw=black] (yj4);
    \path[-] (xj4) edge[draw=black] (yj5);
    \path[-] (xj4) edge[draw=black] (yj3);
    
    \path[-] (yi) edge[draw=black] (xj1);
    \path[-] (yi) edge[draw=black] (xj2);
    \path[-] (yi) edge[draw=black] (xj3);
    \path[-] (yi) edge[draw=black] (xj4);

     \end{tikzpicture}
    
    \caption{Absorbing vertex $v_t$ in the case $r=2s=4$. The blue connection between classes indicates that those edges belong to the fixed matching in the cluster graph while gray connections indicate using additional edges.}
    \label{fig:Step52}
\end{figure}

We continue in a similar fashion to cover $v_2,\dots,v_s$.
For this let $t=2,\dots,s$ and assume that $i_{t-1}$ is such that $|Y_{i_{t-1}}|-t=|X_{i_{t-1}}| \equiv 0 \pmod s$.
Let $\cA$ be the neighbours of $X_{i_{t-1}}$ in $R$ and let $\cB$ be those clusters which share an edge of $M$ with a cluster from $\cA$.
By Fact~\ref{fact:non-extremal} applied to $\cB$ and $H(v_t)$ there are indices $i_t,j$ such that $X_{i_t} \in H(v_t)$, $Y_{i_t},X_j,Y_j \in H$ and $Y_{i_t}Y_j$ and $X_jX_{i_{t-1}}$ are edges of $R$.
This gives the path $Y_{i_{t-1}},X_J,Y_j,X_{i_t},Y_{i_t}$ in $R$.
We extend the paths $P_{i_t}$ and $P_j$, such that after removing the internal vertices we have $|Y_{i_{t-1}}|=|X_{i_{t-1}}| \equiv 0 \pmod s$, $|Y_j|=|X_j|\equiv 0 \pmod s$, and $|Y_{i_{t}}|-(t+1)=|X_{i_{t}}|\equiv 0 \pmod s$.
This can be done similarly as for $P_{i_1}$ above and we add the internal vertices to $W_0$ (this adds $6s$ vertices, see Figure~\ref{fig:Step52}).

We repeat the same procedure to cover $v_{2s},\dots,v_{s+1}$ and are left do deal with $|Y_{i_{s}}|-s=|X_{i_{s}}|\equiv 0 \pmod s$ and $|Y_{i_{s+1}}|-s=|X_{i_{s+1}}|\equiv 0 \pmod s$.
For this let $\cA$ be the clusters in $R$ that share an edge in $M$ with a neighbour of the cluster $Y_{i_j}$ and, similarly, let $\cB$ be the clusters in $R$ that share an edge in $M$ with a neighbour of the cluster $Y_{i_{j+1}}$.
By Fact~\ref{fact:non-extremal} applied to $\cA$ and $\cB$ we find indices $j_1,j_2$ such that $X_{j_1},Y_{j_1},X_{j_2},Y_{j_2} \in H$ and $Y_{i_{s}}X_{j_1}$, $Y_{j_1}Y_{j_2}$, and $X_{j_2}Y_{i_{s+1}}$ are edges of $R$.
We extend the paths $P_{j_1}$ and $P_{j_2}$ such that after removing the internal vertices we have $|X_{j_1}|=|Y_{j_1}|\equiv 0 \pmod s$, $|X_{j_2}|=|Y_{j_2}|\equiv 0 \pmod s$, $|Y_{i_{s}}|=|X_{i_{s}}|\equiv 0 \pmod s$, and $|Y_{i_{s+1}}|=|X_{i_{s+1}}|\equiv 0 \pmod s$.
We add the internal vertices to $W_0$ (this adds $10s$ vertices, see Figure~\ref{fig:Step53}).

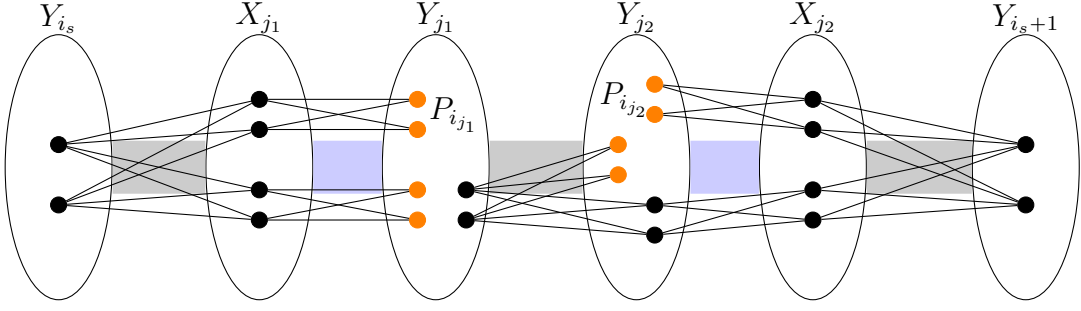
\begin{figure}
    \centering
    \begin{tikzpicture}[scale=0.8, every node/.style={inner sep=0.2,outer sep=0.2}]

    \node[circle, draw=black,fill=black, minimum width = 6,  font=\tiny] (yi1) at (-11.2, 1.75) {};
    \node[circle, draw=black,fill=black, minimum width = 6,  font=\tiny] (yi2) at (-11.2, 2.75) {};
    
    \node[circle, draw=black,fill=black, minimum width = 6,  font=\tiny] (xj11) at (-7.9, 1.5) {};
    \node[circle, draw=black,fill=black, minimum width = 6,  font=\tiny] (xj12) at (-7.9, 2) {};
    \node[circle, draw=black,fill=black, minimum width = 6,  font=\tiny] (xj13) at (-7.9, 3) {};
    \node[circle, draw=black,fill=black, minimum width = 6,  font=\tiny] (xj14) at (-7.9, 3.5) {};
    
    \node[circle, draw=orange,fill=orange, minimum width = 6,  font=\tiny] (yj11) at (-5.3, 1.5) {};
    \node[circle, draw=orange,fill=orange, minimum width = 6,  font=\tiny] (yj12) at (-5.3, 2) {};
    \node[circle, draw=orange,fill=orange, minimum width = 6,  font=\tiny] (yj13) at (-5.3, 3) {};
    \node[circle, draw=orange,fill=orange, minimum width = 6,  font=\tiny] (yj14) at (-5.3, 3.5) {};
    \node[] at (-4.7, 3.25) {$P_{i_{j_1}}$};
    
    \node[circle, draw=black,fill=black, minimum width = 6,  font=\tiny] (yj15) at (-4.5, 1.5) {};
    \node[circle, draw=black,fill=black, minimum width = 6,  font=\tiny] (yj16) at (-4.5, 2) {};

    \node[circle, draw=black,fill=black, minimum width = 6,  font=\tiny] (yj21) at (-1.4, 1.25) {};
    \node[circle, draw=black,fill=black, minimum width = 6,  font=\tiny] (yj22) at (-1.4, 1.75) {};
    \node[circle, draw=orange,fill=orange, minimum width = 6,  font=\tiny] (yj23) at (-2, 2.25) {};
    \node[circle, draw=orange,fill=orange, minimum width = 6,  font=\tiny] (yj24) at (-2, 2.75) {};
    \node[circle, draw=orange,fill=orange, minimum width = 6,  font=\tiny] (yj25) at (-1.4, 3.25) {};
    \node[circle, draw=orange,fill=orange, minimum width = 6,  font=\tiny] (yj26) at (-1.4, 3.75) {};
    \node[] at (-1.9, 3.5) {$P_{i_{j_2}}$};
    
    \node[circle, draw=black,fill=black, minimum width = 6,  font=\tiny] (xj21) at (1.2, 1.5) {};
    \node[circle, draw=black,fill=black, minimum width = 6,  font=\tiny] (xj22) at (1.2, 2) {};
    \node[circle, draw=black,fill=black, minimum width = 6,  font=\tiny] (xj23) at (1.2, 3) {};
    \node[circle, draw=black,fill=black, minimum width = 6,  font=\tiny] (xj24) at (1.2, 3.5) {};
    
    \node[circle, draw=black,fill=black, minimum width = 6,  font=\tiny] (ys1) at (4.7, 1.75) {};
    \node[circle, draw=black,fill=black, minimum width = 6,  font=\tiny] (ys2) at (4.7, 2.75) {};

    \node[ellipse, draw = black, minimum width = 40, minimum height = 100, label = above:{$Y_{i_{s}}$}] (Yi) at (-11.2,2.375) {};
    
    \node[ellipse, draw = black, minimum width = 40, minimum height = 100, label = above:{$X_{j_1}$}] (Xj1) at (-7.9,2.375) {};
    
    \node[ellipse, draw = black, minimum width = 40, minimum height = 100, label = above:{$Y_{j_1}$}] (Yj1) at (-5,2.375) {};
    
    \node[ellipse, draw = black, minimum width = 40, minimum height = 100, label = above:{$Y_{j_2}$}] (Yj2) at (-1.7,2.375) {};
    
    \node[ellipse, draw = black, minimum width = 40, minimum height = 100, label = above:{$X_{j_2}$}] (Xj2) at (1.2,2.375) {};
    
    \node[ellipse, draw = black, minimum width = 40, minimum height = 100, label = above:{$Y_{i_s+1}$}] (Ys) at (4.7,2.375) {};
    
    \path[-] (Yi) edge[draw=black!20, line width=20pt] (Xj1);
    \path[-] (Xj1) edge[draw=blue!20, line width=20pt] (Yj1);
    \path[-] (Yj1) edge[draw=black!20, line width=20pt] (Yj2);
    \path[-] (Yj2) edge[draw=blue!20, line width=20pt] (Xj2);
    \path[-] (Xj2) edge[draw=black!20, line width=20pt] (Ys);
    
    \path[-] (yi1) edge[draw=black] (xj11);
    \path[-] (yi1) edge[draw=black] (xj12);
    \path[-] (yi1) edge[draw=black] (xj13);
    \path[-] (yi1) edge[draw=black] (xj14);
    
    \path[-] (yi2) edge[draw=black] (xj11);
    \path[-] (yi2) edge[draw=black] (xj12);
    \path[-] (yi2) edge[draw=black] (xj13);
    \path[-] (yi2) edge[draw=black] (xj14);
    
    \path[-] (xj11) edge[draw=black] (yj11);
    \path[-] (xj12) edge[draw=black] (yj11);
    \path[-] (xj11) edge[draw=black] (yj12);
    \path[-] (xj12) edge[draw=black] (yj12);
    
    \path[-] (xj13) edge[draw=black] (yj13);
    \path[-] (xj14) edge[draw=black] (yj13);
    \path[-] (xj13) edge[draw=black] (yj14);
    \path[-] (xj14) edge[draw=black] (yj14);
    
    \path[-] (yj15) edge[draw=black] (yj21);
    \path[-] (yj15) edge[draw=black] (yj22);
    \path[-] (yj15) edge[draw=black] (yj23);
    \path[-] (yj15) edge[draw=black] (yj24);
    
    \path[-] (yj16) edge[draw=black] (yj21);
    \path[-] (yj16) edge[draw=black] (yj22);
    \path[-] (yj16) edge[draw=black] (yj23);
    \path[-] (yj16) edge[draw=black] (yj24);
    
    \path[-] (yj21) edge[draw=black] (xj21);
    \path[-] (yj21) edge[draw=black] (xj22);
    \path[-] (yj22) edge[draw=black] (xj21);
    \path[-] (yj22) edge[draw=black] (xj22);
    
    \path[-] (yj25) edge[draw=black] (xj23);
    \path[-] (yj25) edge[draw=black] (xj24);
    \path[-] (yj26) edge[draw=black] (xj23);
    \path[-] (yj26) edge[draw=black] (xj24);

    \path[-] (xj21) edge[draw=black] (ys1);
    \path[-] (xj21) edge[draw=black] (ys2);
    \path[-] (xj22) edge[draw=black] (ys1);
    \path[-] (xj22) edge[draw=black] (ys2);
    \path[-] (xj23) edge[draw=black] (ys1);
    \path[-] (xj23) edge[draw=black] (ys2);
    \path[-] (xj24) edge[draw=black] (ys1);
    \path[-] (xj24) edge[draw=black] (ys2);

     \end{tikzpicture}
    
    \caption{Balancing $Y_{i_{s}}$ and $Y_{i_{s+1}}$ in the case $r=2s=4$. As before, we indicate the fixed matching in the cluster graph by blue connections and additional edges by gray connections.}
    \label{fig:Step53}
\end{figure}

It is easy to ensure that in each of these steps the new end-tuples are always well-connected.
During this procedure when absorbing $2s$ vertices into the paths we added at most $18 s^2$ vertices to $W_0$.

\smallskip
\textbf{Reset after $\nu n$ iterations.}
We can repeat this for $\nu n$ steps as this still guarantees that $|W_0| \le 18 s^2 \nu n$ as required.
It follows from the bound on $\nu$ that no degree in the regular pairs in $M$ dropping below $\tfrac d2 \cdot \tfrac{n}{\ell}$.
Moreover, for $i \in [\ell']$ the end-tuples of the path $P_i$ that were well-connected at some point still have at least $\tfrac 14 d^s L$ common neighbours into the respective sets.
After $\nu n$ steps we want to get close enough to the original situation such that we can continue for another $\nu n$ steps.
By Lemma~\ref{lem:superreg} for any $i \in [\ell']$ we need to remove at most $2 \eps \tfrac n\ell$ vertices from the $X_i,Y_i$ to get that  $(X_i,Y_i)$ is $(2\eps,\tfrac{3d}{4})$-super-regular.
We will greedily absorb these vertices into the path $P_i$ by alternating between $X_i$ and $Y_i$ without any degree dropping below $\tfrac{3d}{4} \cdot \tfrac n\ell$.
Here we use that the original pair was $(\eps,d)$-regular and that it intersects $W$ in at most $\tfrac 12 \tfrac n\ell$ vertices.
While doing this, we can ensure that, for each $i \in [\ell']$, both ends of the path $P_i$ are extended at least two steps and the ends are well-connected again.
We add the vertices used in these paths to $W$, also move the vertices from $W_0$ to $W$, and set $W_0=\emptyset$.
We continue by covering $2s$ vertices from $V_0$ as explained above.

\smallskip
\textbf{Covering the last vertices.}
We can repeat this until $|V_0|<2s$ and if $|V_0|=0$ we are done with this step.
Otherwise, we have $|V_0|=t\not=0$, $n \not\equiv 0 \pmod{2s}$, and we can not find the $\tfrac r2$-blow-up of a cycle.
If $r$ is odd $nr \equiv 0 \pmod 2$ implies that $n$ and also $t$ are even.
We need to absorb the last $t$ vertices in a different way.
If $r$ is even, let $v \in V_0$ and with Fact~\ref{fact:non-extremal} pick $j_1,j_4$ such that $X_{j_1},X_{j_4} \in H(v)$, $Y_{j_1},Y_{j_4} \in H$, and $X_{j_1}X_{j_4}$ is an edge of $R$.
Then we consider those clusters that share an edge of $M$ with a neighbour of $Y_{j_1}$ and $Y_{j_4}$ respectively and with Fact~\ref{fact:non-extremal} pick $j_2,j_3$ such that $X_{j_2},Y_{j_2},X_{j_3},Y_{j_3} \in H$, and $Y_{j_1}Y_{j_2}$, $Y_{j_4}Y_{j_3}$, and $X_{j_2}X_{j_3}$ are edges of $R$.
We extend the path $P_{j_1}$ by following $Y_{j_1},X_{j_1},X_{j_4},Y_{j_4},Y_{j_3},X_{j_3},X_{j_2},Y_{j_2},Y_{j_1}$ such that the vertex $v$ is in the neighbourhood of the new vertices from $X_{j_1}$ and $X_{j_4}$.
This allows us to include $v$ into the path.
Note that this is no longer the $\tfrac r2$-blow-up of a path (see Figure~\ref{fig:Step54}).
If $r$ is odd let $u,v \in V_0$ and we proceed similarly to include both vertices (see Figure~\ref{fig:Step55}).
Here we find $X_{j_1},X_{j_4} \in H(v)$ and $X_{i_2},X_{j_3} \in H(u)$ such that $X_{j_1}X_{j_4}$ and $X_{i_2}X_{j_3}$ are edges of $R$ and then connect $Y_{j_1}$ to $Y_{j_2}$ and $Y_{j_4}$ to $Y_{j_3}$ as in the even case by using four additional clusters for each connection.
Note that here the path structure on the lower half also is not an $\tfrac r2$-blow-up (alternating $K_{s,s}$ and $K_{s,s}-K_{1,1}^{(s)}$), but has one edge shifted (alternating $K_{s,s}-K_{1,1}$ and $K_{s,s}-K_{1,1}^{(s-1)}$).

\begin{figure}
    \centering
    \begin{tikzpicture}[scale=0.8, every node/.style={inner sep=0.2,outer sep=0.2}]

    \node[circle, draw=orange,fill=orange, minimum width = 6,  font=\tiny] (yj11) at (-5.25, 1) {};
    \node[circle, draw=orange,fill=orange, minimum width = 6,  font=\tiny] (yj12) at (-5.25, 1.5) {};
    \node[circle, draw=orange,fill=orange, minimum width = 6,  font=\tiny] (yj13) at (-4.75, 3) {};
    \node[circle, draw=orange,fill=orange, minimum width = 6,  font=\tiny] (yj14) at (-4.75, 3.5) {};
    \node[] at (-5.25, 3.25) {$P_{i_{j_1}}$};
    
    \node[circle, draw=black,fill=black, minimum width = 6,  font=\tiny] (yj21) at (-1.7, 2.25) {};
    \node[circle, draw=black,fill=black, minimum width = 6,  font=\tiny] (yj22) at (-1.7, 2.75) {};
    
    \node[circle, draw=black,fill=black, minimum width = 6,  font=\tiny] (xj21) at (1.2, 2.25) {};
    \node[circle, draw=black,fill=black, minimum width = 6,  font=\tiny] (xj22) at (1.2, 2.75) {};
    
    \node[circle, draw=black,fill=black, minimum width = 6,  font=\tiny] (xj31) at (1.2, -3.75) {};
    \node[circle, draw=black,fill=black, minimum width = 6,  font=\tiny] (xj32) at (1.2, -4.25) {};
    
    \node[circle, draw=black,fill=black, minimum width = 6,  font=\tiny] (yj31) at (-1.7, -3.75) {};
    \node[circle, draw=black,fill=black, minimum width = 6,  font=\tiny] (yj32) at (-1.7, -4.25) {};
    
    \node[circle, draw=black,fill=black, minimum width = 6,  font=\tiny] (yj41) at (-5, -3.75) {};
    \node[circle, draw=black,fill=black, minimum width = 6,  font=\tiny] (yj42) at (-5, -4.25) {};
    
    \node[circle, draw=black,fill=black, minimum width = 6,  font=\tiny] (xj41) at (-7.9, -3.75) {};
    \node[circle, draw=black,fill=black, minimum width = 6,  font=\tiny] (xj42) at (-7.9, -4.25) {};
    
    \node[circle, draw=black,fill=black, minimum width = 6,  font=\tiny] (xj11) at (-7.9, 2.25) {};
    \node[circle, draw=black,fill=black, minimum width = 6,  font=\tiny] (xj12) at (-7.9, 2.75) {};

    \node[ellipse, draw = black, minimum width = 40, minimum height = 100, label = above:{$X_{j_1}$}] (Xj1) at (-7.9,2.375) {};
    
    \node[ellipse, draw = black, minimum width = 40, minimum height = 100, label = above:{$Y_{j_1}$}] (Yj1) at (-5,2.375) {};
    
    \node[ellipse, draw = black, minimum width = 40, minimum height = 100, label = above:{$Y_{j_2}$}] (Yj2) at (-1.7,2.375) {};
    
    \node[ellipse, draw = black, minimum width = 40, minimum height = 100, label = above:{$X_{j_2}$}] (Xj2) at (1.2,2.375) {};

    \node[ellipse, draw = black, minimum width = 40, minimum height = 100, label = below:{$X_{j_4}$}] (Xj4) at (-7.9,-4) {};
    
    \node[ellipse, draw = black, minimum width = 40, minimum height = 100, label = below:{$Y_{j_4}$}] (Yj4) at (-5,-4) {};
    
    \node[ellipse, draw = black, minimum width = 40, minimum height = 100, label = below:{$Y_{j_3}$}] (Yj3) at (-1.7,-4) {};
    
    \node[ellipse, draw = black, minimum width = 40, minimum height = 100, label = below:{$X_{j_3}$}] (Xj3) at (1.2,-4) {};

    \path[-] (Xj1) edge[draw=blue!20, line width=20pt] (Yj1);
    \path[-] (Yj1) edge[draw=black!20, line width=20pt] (Yj2);
    \path[-] (Yj2) edge[draw=blue!20, line width=20pt] (Xj2);
    
    \path[-] (Xj4) edge[draw=blue!20, line width=20pt] (Yj4);
    \path[-] (Yj4) edge[draw=black!20, line width=20pt] (Yj3);
    \path[-] (Yj3) edge[draw=blue!20, line width=20pt] (Xj3);
    
    \path[-] (Xj1) edge[draw=black!20, line width=20pt] (Xj4);
    \path[-] (Xj2) edge[draw=black!20, line width=20pt] (Xj3);
    
    \node[circle, draw=red,fill=red, minimum width = 6,  font=\tiny] (v) at (-10, -0.875) {};
    \node[] at (-10.5, -0.875) {$v$};
    
    \path[-] (v) edge[draw=red] (xj11);
    \path[-] (v) edge[draw=red] (xj12);
    \path[-] (v) edge[draw=red] (xj41);
    \path[-] (v) edge[draw=red] (xj42);
    
    \path[-] (xj11) edge[draw=black] (xj41);
    \path[-] (xj12) edge[draw=black, bend left = 25] (xj42);
    
    \path[-] (xj21) edge[draw=black, bend right = 20] (xj31);
    \path[-] (xj21) edge[draw=black, bend right = 25] (xj32);
    \path[-] (xj22) edge[draw=black, bend left = 20] (xj31);
    \path[-] (xj22) edge[draw=black, bend left = 25] (xj32);
    
    \path[-] (xj11) edge[draw=black] (yj11);
    \path[-] (xj12) edge[draw=black] (yj11);
    \path[-] (xj11) edge[draw=black] (yj12);
    \path[-] (xj12) edge[draw=black] (yj12);
    
    \path[-] (xj41) edge[draw=black] (yj41);
    \path[-] (xj42) edge[draw=black] (yj41);
    \path[-] (xj41) edge[draw=black] (yj42);
    \path[-] (xj42) edge[draw=black] (yj42);
    
    \path[-] (yj31) edge[draw=black] (yj41);
    \path[-] (yj32) edge[draw=black] (yj41);
    \path[-] (yj31) edge[draw=black] (yj42);
    \path[-] (yj32) edge[draw=black] (yj42);
    
    \path[-] (yj31) edge[draw=black] (xj31);
    \path[-] (yj32) edge[draw=black] (xj31);
    \path[-] (yj31) edge[draw=black] (xj32);
    \path[-] (yj32) edge[draw=black] (xj32);
    
    \path[-] (yj21) edge[draw=black] (xj21);
    \path[-] (yj22) edge[draw=black] (xj21);
    \path[-] (yj21) edge[draw=black] (xj22);
    \path[-] (yj22) edge[draw=black] (xj22);
    
    \path[-] (yj21) edge[draw=black] (yj13);
    \path[-] (yj21) edge[draw=black] (yj14);
    \path[-] (yj22) edge[draw=black] (yj13);
    \path[-] (yj22) edge[draw=black] (yj14);

     \end{tikzpicture}
    
    \caption{Absorbing $v$ in the even case, where $r=2s=4$. The blue and gray connections represent the matching edges and non-matching edges in the cluster graph again.}
    \label{fig:Step54}
\end{figure}
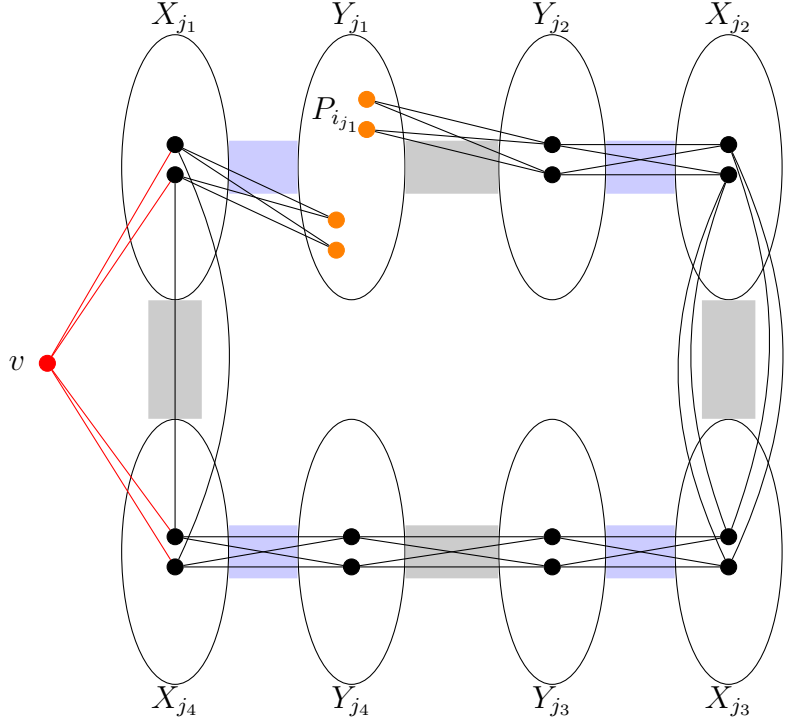

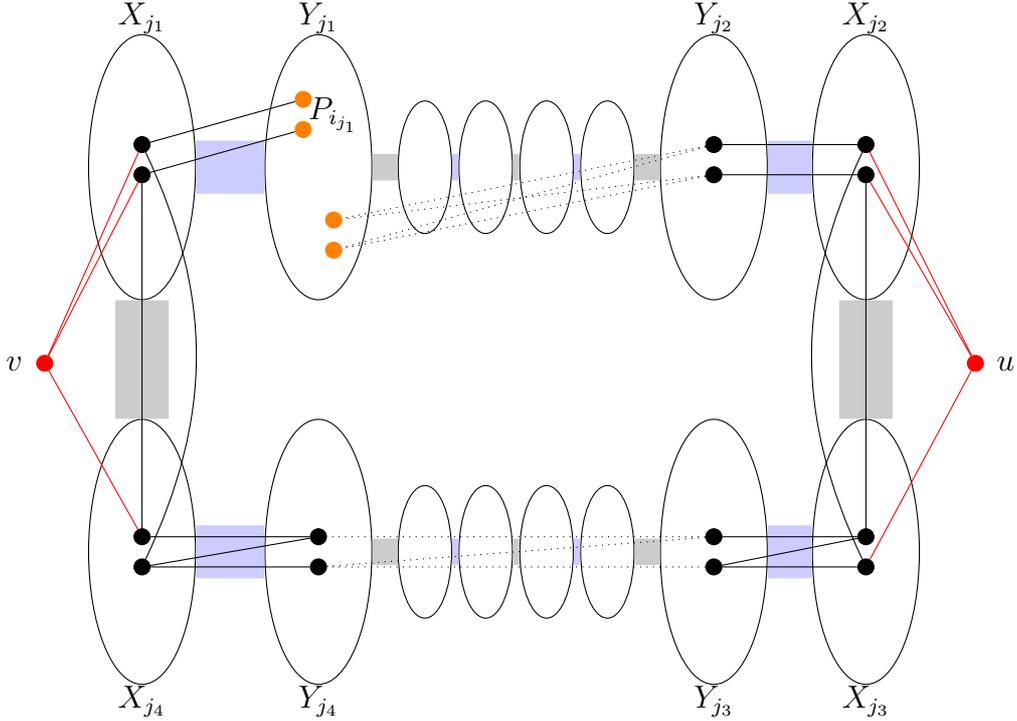
\begin{figure}
    \centering
    \begin{tikzpicture}[scale=0.8, every node/.style={inner sep=0.2,outer sep=0.2}]

    \node[circle, draw=orange,fill=orange, minimum width = 6,  font=\tiny] (yj11) at (-4.75, 1) {};
    \node[circle, draw=orange,fill=orange, minimum width = 6,  font=\tiny] (yj12) at (-4.75, 1.5) {};
    \node[circle, draw=orange,fill=orange, minimum width = 6,  font=\tiny] (yj13) at (-5.25, 3) {};
    \node[circle, draw=orange,fill=orange, minimum width = 6,  font=\tiny] (yj14) at (-5.25, 3.5) {};
    \node[] at (-4.75, 3.25) {$P_{i_{j_1}}$};
    
    \node[circle, draw=black,fill=black, minimum width = 6,  font=\tiny] (yj21) at (1.5, 2.25) {};
    \node[circle, draw=black,fill=black, minimum width = 6,  font=\tiny] (yj22) at (1.5, 2.75) {};
    
    \node[circle, draw=black,fill=black, minimum width = 6,  font=\tiny] (xj21) at (4, 2.25) {};
    \node[circle, draw=black,fill=black, minimum width = 6,  font=\tiny] (xj22) at (4, 2.75) {};
    
    \node[circle, draw=black,fill=black, minimum width = 6,  font=\tiny] (xj31) at (4, -3.75) {};
    \node[circle, draw=black,fill=black, minimum width = 6,  font=\tiny] (xj32) at (4, -4.25) {};
    
    \node[circle, draw=black,fill=black, minimum width = 6,  font=\tiny] (yj31) at (1.5, -3.75) {};
    \node[circle, draw=black,fill=black, minimum width = 6,  font=\tiny] (yj32) at (1.5, -4.25) {};
    
    \node[circle, draw=black,fill=black, minimum width = 6,  font=\tiny] (yj41) at (-5, -3.75) {};
    \node[circle, draw=black,fill=black, minimum width = 6,  font=\tiny] (yj42) at (-5, -4.25) {};
    
    \node[circle, draw=black,fill=black, minimum width = 6,  font=\tiny] (xj41) at (-7.9, -3.75) {};
    \node[circle, draw=black,fill=black, minimum width = 6,  font=\tiny] (xj42) at (-7.9, -4.25) {};
    
    \node[circle, draw=black,fill=black, minimum width = 6,  font=\tiny] (xj11) at (-7.9, 2.25) {};
    \node[circle, draw=black,fill=black, minimum width = 6,  font=\tiny] (xj12) at (-7.9, 2.75) {};

    \node[ellipse, draw = black, minimum width = 40, minimum height = 100, label = above:{$X_{j_1}$}] (Xj1) at (-7.9,2.375) {};
    
    \node[ellipse, draw = black, minimum width = 40, minimum height = 100, label = above:{$Y_{j_1}$}] (Yj1) at (-5,2.375) {};
    
    \node[ellipse, draw = black, minimum width = 40, minimum height = 100, label = above:{$Y_{j_2}$}] (Yj2) at (1.5,2.375) {};
    
    \node[ellipse, draw = black, minimum width = 40, minimum height = 100, label = above:{$X_{j_2}$}] (Xj2) at (4,2.375) {};

    \node[ellipse, draw = black, minimum width = 40, minimum height = 100, label = below:{$X_{j_4}$}] (Xj4) at (-7.9,-4) {};
    
    \node[ellipse, draw = black, minimum width = 40, minimum height = 100, label = below:{$Y_{j_4}$}] (Yj4) at (-5,-4) {};
    
    \node[ellipse, draw = black, minimum width = 20, minimum height = 50] (S1) at (-3.25,-4) {};
    \node[ellipse, draw = black, minimum width = 20, minimum height = 50] (S2) at (-2.25,-4) {};
    \node[ellipse, draw = black, minimum width = 20, minimum height = 50] (S3) at (-1.25,-4) {};
    \node[ellipse, draw = black, minimum width = 20, minimum height = 50] (S4) at (-0.25,-4) {};
    
    \node[ellipse, draw = black, minimum width = 20, minimum height = 50] (T1) at (-3.25,2.375) {};
    \node[ellipse, draw = black, minimum width = 20, minimum height = 50] (T2) at (-2.25,2.375) {};
    \node[ellipse, draw = black, minimum width = 20, minimum height = 50] (T3) at (-1.25,2.375) {};
    \node[ellipse, draw = black, minimum width = 20, minimum height = 50] (T4) at (-0.25,2.375) {};
    
    \node[ellipse, draw = black, minimum width = 40, minimum height = 100, label = below:{$Y_{j_3}$}] (Yj3) at (1.5,-4) {};
    
    \node[ellipse, draw = black, minimum width = 40, minimum height = 100, label = below:{$X_{j_3}$}] (Xj3) at (4,-4) {};

    \path[-] (Xj1) edge[draw=blue!20, line width=20pt] (Yj1);

    \path[-] (Yj1) edge[draw=black!20, line width=10pt] (T1);
    \path[-] (T1) edge[draw=blue!20, line width=10pt] (T2);
    \path[-] (T2) edge[draw=black!20, line width=10pt] (T3);
    \path[-] (T3) edge[draw=blue!20, line width=10pt] (T4);
    \path[-] (Yj2) edge[draw=black!20, line width=10pt] (T4);

    \path[-] (Yj2) edge[draw=blue!20, line width=20pt] (Xj2);
    
    \path[-] (Xj4) edge[draw=blue!20, line width=20pt] (Yj4);
    
    \path[-] (Yj4) edge[draw=black!20, line width=10pt] (S1);
    \path[-] (S1) edge[draw=blue!20, line width=10pt] (S2);
    \path[-] (S2) edge[draw=black!20, line width=10pt] (S3);
    \path[-] (S3) edge[draw=blue!20, line width=10pt] (S4);
    \path[-] (Yj3) edge[draw=black!20, line width=10pt] (S4);

    \path[-] (Yj3) edge[draw=blue!20, line width=20pt] (Xj3);
    
    \path[-] (Xj1) edge[draw=black!20, line width=20pt] (Xj4);
    \path[-] (Xj2) edge[draw=black!20, line width=20pt] (Xj3);
    
    \node[circle, draw=red,fill=red, minimum width = 6,  font=\tiny] (v) at (-9.5, -0.875) {};
    \node[] at (-10, -0.875) {$v$};
    \node[circle, draw=red,fill=red, minimum width = 6,  font=\tiny] (u) at (5.8, -0.875) {};
    \node[] at (6.3, -0.875) {$u$};
    
    \path[-] (v) edge[draw=red] (xj11);
    \path[-] (v) edge[draw=red] (xj12);
    \path[-] (v) edge[draw=red] (xj41);

    \path[-] (u) edge[draw=red] (xj32);
    \path[-] (u) edge[draw=red] (xj21);
    \path[-] (u) edge[draw=red] (xj22);
    
    \path[-] (xj11) edge[draw=black] (xj41);
    \path[-] (xj12) edge[draw=black, bend left = 25] (xj42);
    
    \path[-] (xj21) edge[draw=black] (xj31);
    \path[-] (xj22) edge[draw=black, bend right = 25] (xj32);
    
    \path[-] (xj12) edge[draw=black] (yj14);
    \path[-] (xj11) edge[draw=black] (yj13);

    \path[-] (xj41) edge[draw=black] (yj41);
    \path[-] (xj42) edge[draw=black] (yj41);
    \path[-] (xj42) edge[draw=black] (yj42);
    
    \path[-] (yj31) edge[draw=black, dotted] (yj41);
    \path[-] (yj31) edge[draw=black, dotted] (yj42);
    \path[-] (yj32) edge[draw=black, dotted] (yj42);
    
    \path[-] (yj31) edge[draw=black] (xj31);
    \path[-] (yj32) edge[draw=black] (xj31);
    \path[-] (yj32) edge[draw=black] (xj32);
    
    \path[-] (yj21) edge[draw=black] (xj21);
    \path[-] (yj22) edge[draw=black] (xj22);
    
    \path[-] (yj21) edge[draw=black, dotted] (yj11);
    \path[-] (yj21) edge[draw=black, dotted] (yj12);
    \path[-] (yj22) edge[draw=black, dotted] (yj11);
    \path[-] (yj22) edge[draw=black, dotted] (yj12);

     \end{tikzpicture}
    
    \caption{Absorbing $v$ and $u$ in the odd case, where $r=3, s=2$. The blue and gray connections represent the matching edges and non-matching edges in the cluster graph again. The dotted edges indicate an $r-$regular $r-$connected path.}
    \label{fig:Step55}
\end{figure}

\smallskip
\textbf{Summary.}
We need to estimate the number of vertices added to $W$ throughout the whole procedure of covering $V_0$, which are at most $2 \beta n$ vertices.
There are at most $\lceil \tfrac{2\beta n}{2s\nu n} \rceil \le \tfrac{\beta}{s \nu} + 1$ iterations of the argument for covering $2s\nu n$ vertices and for covering $2s$ vertices of $V_0$ we need at most $18 s^2$ vertices.
During these iterations we will always have 
\[|W| \le \frac{18s^2}{2s} 2 \beta n + \left(\frac{\beta}{s\nu}+1\right) \cdot \ell' 2 \eps \frac n\ell \cdot 2s \le 20 s \beta n\,,\]
where the second term comes from the vertices we need to absorb after each iteration.
Covering the last $t<2s$ vertices does not change anything and, therefore, we can indeed repeat this until all vertices from $V_0$ are covered by the paths.

\smallskip
\noindent \textbf{\ref{step:spanning}.}
We fully absorbed $V_0$ into the connecting paths such that the end-tuples are well-connected to the other side of the matching edge.
Let $i \in [\ell']$ and denote by $\mathbf{x}=(x_1,\dots, x_s)$ and $\mathbf{y}=(y_1,\dots, y_s)$ the end-tuples of the paths $P_{i-1}$ and $P_{i}$, respectively.
Remove $\mathbf{x}$ from $X_i$ and $\mathbf{y}$ from $Y_i$, note that $|X_i|=|Y_i| \equiv 0 \pmod s$ and that $(X_i,Y_i)$ is $(2\eps,\tfrac d2)$-super-regular.
Denote the common neighbours of $\mathbf{x}$ in $Y_i$ by $Y'$, the common neighbours of $\mathbf{y}$ in $X_i$ by $X'$, and note that $|X'|,|Y'| \ge \tfrac 14 d^s |X|$.
Therefore, we can apply Lemma~\ref{lem:blowup} to cover $X_i$ and $Y_i$ with the $\tfrac r2$-blow-up of a path and end-tuples within $X'$ and $Y'$, which then connects $P_{i-1}$ to $P_{i}$.

Together this gives an $r$-regular subgraph in $G$.
In the case when $n \equiv 0 \pmod{2s}$ we have constructed the $\tfrac r2$-blow-up of a path, which is $r$-connected.
To see that it is also $r$-connected in the other cases it suffices to observe that in the case when $r$ is even removing a perfect matching from a $K_{s,s}$ and adding a vertex $v$ to all these $r$ vertices (see Figure~\ref{fig:Step54}) preserves this property.
Similarly, in the case when $r$ is odd, removing a perfect matching from two copies of $K_{s,s}$, connecting vertices $u,v$ two $r$ of these vertices, and shifting the path in between as described (see Figure~\ref{fig:Step55}) also preserves this property.
\end{proof}

\bibliography{bibliography} 
\bibliographystyle{ieeetr}

\endgroup
\end{document}